\newtheorem{lemma}{Lemma}[section]
\newtheorem{example}{Example}[section]
\numberwithin{equation}{section} 
\DeclareMathOperator*{\ve}{vec}
\DeclareMathOperator*{\vech}{vech}
\begin{document}

\title{Supplementary Material for "Estimation of a Multiplicative Correlation Structure in the Large Dimensional Case" 
}

\author{Christian M. Hafner\thanks{Institut de statistique, biostatistique et sciences
actuarielles, and CORE, Universit\'e catholique de Louvain, Louvain-la-Neuve,
Belgium. Email: \texttt{christian.hafner@uclouvain.be}.}\\Universit\'{e}{\small \ }catholique de Louvain
\and Oliver B. Linton\thanks{Faculty of Economics, Austin Robinson Building,
Sidgwick Avenue, Cambridge, CB3 9DD. Email: \texttt{obl20@cam.ac.uk}.}\\University of Cambridge
\and Haihan Tang\thanks{Corresponding author. Fanhai International School of
Finance and School of Economics, Fudan University, 220 Handan Road, Yangpu District, Shanghai, 200433, China. Email:
\texttt{hhtang@fudan.edu.cn}.}\\Fudan University}
\date{\today}
\maketitle

\section{Supplementary Material}

This section contains supplementary materials to the main article. SM \ref{sec B1} contains additional materials related to the Kronecker product (models). SM \ref{sec two stage MD} outlines a shrinkage approach via minimum distance to make the estimated $\exp(\log \Theta_j^0)$ indeed a correlation matrix for $j=1,\ldots,v$.  SM \ref{sec A5} gives a lemma characterising a rate for $\|\hat{V}_T-V\|_{\infty}$, which is used in the proofs of limiting distributions of our estimators. SM \ref{sec A6}, SM \ref{sec A7}, and SM \ref{sec A8} provide proofs of Theorem \ref{thm asymptotic normality MD when D is unknown}, Theorem \ref{prop Haihan score functions and second derivatives}, and Theorem \ref{thm one step estimator asymptotic normality}, respectively. SM \ref{sec A9} gives proofs of Theorem \ref{thm overidentification test fixed dim} and Corollary \ref{coro diagonal asymptotics}. SM \ref{secSM.cor.cramerwold} contains miscellaneous results.

\subsection{Additional Materials Related to the Kronecker Product}
\label{sec B1}

The following lemma proves a property of Kronecker products.

\begin{lemma}
\label{prop log kronecker}
Suppose $v=2,3,\ldots$ and that $A_1,A_2,\ldots,A_v$ are real symmetric and positive definite matrices of sizes $a_1\times a_1,\ldots, a_v\times a_v$, respectively. Then
\begin{align*}
&\log (A_1\otimes A_2\otimes \cdots \otimes A_v) \\
&= \log A_1\otimes I_{a_2}\otimes \cdots \otimes I_{a_v}+ I_{a_1} \otimes \log A_2\otimes I_{a_3}\otimes \cdots \otimes I_{a_v}+\cdots +I_{a_1}\otimes I_{a_2}\otimes \cdots \otimes \log A_v.
\end{align*}
\end{lemma}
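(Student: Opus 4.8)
The plan is to diagonalize each factor and then exploit the multiplicativity of Kronecker products under the mixed-product rule $(A\otimes B)(C\otimes D)=(AC)\otimes(BD)$. Since each $A_k$ is real symmetric and positive definite, I would begin by writing its spectral decomposition $A_k=Q_k\Lambda_k Q_k^\top$, where $Q_k$ is orthogonal and $\Lambda_k=\operatorname{diag}(\lambda_{k,1},\dots,\lambda_{k,a_k})$ has strictly positive diagonal entries. Recall that for such a matrix the logarithm is $\log A_k=Q_k(\log\Lambda_k)Q_k^\top$ with $\log\Lambda_k=\operatorname{diag}(\log\lambda_{k,1},\dots,\log\lambda_{k,a_k})$; this is the object the lemma refers to, and positive definiteness guarantees it is real and well-defined.

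First I would show that $Q:=Q_1\otimes\cdots\otimes Q_v$ diagonalizes the whole product. Repeated use of the mixed-product property shows that $Q$ is orthogonal and that
\[
A_1\otimes\cdots\otimes A_v=Q\,(\Lambda_1\otimes\cdots\otimes\Lambda_v)\,Q^\top .
\]
The matrix $\Lambda_1\otimes\cdots\otimes\Lambda_v$ is diagonal with strictly positive entries $\lambda_{1,i_1}\cdots\lambda_{v,i_v}$, so the left-hand side is again symmetric positive definite and its logarithm equals $Q\,\log(\Lambda_1\otimes\cdots\otimes\Lambda_v)\,Q^\top$.

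Next I would dispose of the diagonal case, which is immediate because $\log$ acts entrywise on the diagonal and $\log(\lambda_{1,i_1}\cdots\lambda_{v,i_v})=\sum_{k=1}^v\log\lambda_{k,i_k}$. Hence
\[
\log(\Lambda_1\otimes\cdots\otimes\Lambda_v)=\sum_{k=1}^v I_{a_1}\otimes\cdots\otimes\log\Lambda_k\otimes\cdots\otimes I_{a_v},
\]
since the right-hand side is exactly the diagonal matrix whose $(i_1,\dots,i_v)$ entry is $\sum_k\log\lambda_{k,i_k}$. Conjugating this identity by $Q$, distributing the conjugation over the sum, and applying the mixed-product rule once more to each summand — using $Q_k(\log\Lambda_k)Q_k^\top=\log A_k$ and $Q_j I_{a_j}Q_j^\top=I_{a_j}$ for $j\neq k$ — turns the $k$-th term into $I_{a_1}\otimes\cdots\otimes\log A_k\otimes\cdots\otimes I_{a_v}$, which is the claimed expression.

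The only real subtlety is the tensor-slot bookkeeping: one must verify that conjugation by $Q$ factorizes $Q(I\otimes\cdots\otimes\log\Lambda_k\otimes\cdots\otimes I)Q^\top$ slot by slot through two applications of the mixed-product property. I expect this matching of slots to be the main (though routine) obstacle; everything else reduces to the standard fact that the eigenvalues of a Kronecker product are the products of the factors' eigenvalues. An alternative would be induction on $v$ with the $v=2$ case as base, but the simultaneous diagonalization above handles all $v$ at once and is cleaner.
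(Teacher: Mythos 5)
Your proof is correct, but it takes a genuinely different route from the paper. The paper proceeds by induction on $v$: it first proves the two-factor case $\log(A_1\otimes A_2)=\log A_1\otimes I_{a_2}+I_{a_1}\otimes\log A_2$ by diagonalizing $A_1$ and $A_2$, and inside that base case it handles $\log(\Lambda_1\otimes\Lambda_2)$ block-by-block, invoking a nontrivial theorem (Higham, Theorem 11.3) that $\log(XY)=\log X+\log Y$ for commuting matrices with positive real eigenvalues, applied to $\lambda_{1,j}I_{a_2}$ and $\Lambda_2$; the inductive step then groups $A_1\otimes\cdots\otimes A_k$ as a single factor and applies the two-factor case together with the induction hypothesis. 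You instead diagonalize all $v$ factors simultaneously, observe that $Q=Q_1\otimes\cdots\otimes Q_v$ orthogonally diagonalizes the full Kronecker product, reduce the identity to the diagonal case where it is verified entrywise by the scalar identity $\log(\lambda_{1,i_1}\cdots\lambda_{v,i_v})=\sum_k\log\lambda_{k,i_k}$, and then conjugate back by $Q$ using the mixed-product rule. This buys you two simplifications at once: you avoid the induction entirely, and you never need the commuting-matrices logarithm theorem, since everything is pushed down to arithmetic on positive scalars. The paper's inductive structure only ever has to manipulate two tensor slots at a time, so its bookkeeping per step is lighter, but your one-shot argument is the more elementary and, arguably, the more transparent of the two; the slot-matching you flag as the main subtlety is indeed routine, being exactly two applications of $(A\otimes B)(C\otimes D)=(AC)\otimes(BD)$ extended to $v$ factors.
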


\begin{proof}
We prove by mathematical induction. We first give a proof for $v=2$; that is,
\[\log (A_1\otimes A_2) \\
= \log A_1\otimes I_{a_2}+ I_{a_1} \otimes \log A_2.\]
Since $A_1,A_2$ are real symmetric, they can be orthogonally diagonalized: $A_i=U_{i}^{\intercal}\Lambda_{i}U_{i}$ for $i=1,2$, where $U_{i}$ is orthogonal, and $\Lambda_{i}=\text{diag}(\lambda_{i,1},\ldots,\lambda_{i,a_i})$ is a diagonal matrix containing those $a_i$ eigenvalues of $A_i$. Positive definiteness of $A_1,A_2$ ensures that their Kronecker product is positive definite. Then the logarithm of $A_1\otimes A_2$ is:
\begin{align}
& \log (A_1\otimes A_2) =\log
[(U_{1}\otimes U_2)^{\intercal}(\Lambda_{1} \otimes\Lambda_{2})(U_{1} \otimes U_2)]  =
(U_{1}\otimes U_2)^{\intercal}\log (\Lambda_{1} \otimes\Lambda_{2})(U_{1} \otimes U_2), \label{align log Sigma}%
\end{align}
where the first equality is due to the mixed product property of the Kronecker product, and the second equality is due to a property of matrix functions. Next,
\begin{align}
&  \log(\Lambda_{1}\otimes\Lambda_{2})=\text{diag}(\log(\lambda_{1,1}\Lambda
_{2}),\ldots,\log(\lambda_{1,a_1}\Lambda_{2}))  =\text{diag}(\log(\lambda_{1,1}I_{a_2}\Lambda_{2}),\ldots,\log
(\lambda_{1,a_{1}}I_{a_2}\Lambda_{2}))\nonumber\\
&  =\text{diag}(\log(\lambda_{1,1}I_{a_2})+\log(\Lambda_{2}),\ldots
,\log(\lambda_{1,a_{1}}I_{a_2})+\log(\Lambda_{2}))\nonumber\\
&  =\text{diag}(\log(\lambda_{1,1}I_{a_2}),\ldots,\log(\lambda_{1,a_{1}}%
I_{a_2}))+\text{diag}(\log(\Lambda_{2}),\ldots,\log(\Lambda_{2}))\nonumber\\
&  =\log(\Lambda_{1})\otimes I_{a_2}+I_{a_1}\otimes\log(\Lambda_{2}),\label{align log Lambda}
\end{align}
where the third equality holds only because $\lambda_{1,j}I_{a_2}$ and
$\Lambda_{2}$ have real positive eigenvalues only and commute for all
$j=1,\ldots,a_{1}$ (\cite{higham2008} p270 Theorem 11.3). Substitute
(\ref{align log Lambda}) into (\ref{align log Sigma}):
\begin{align*}
&\log (A_1\otimes A_2) =(U_{1}\otimes U_2)^{\intercal}\log (\Lambda_{1}\otimes\Lambda_{2})(U_{1}\otimes U_2)  =(U_{1}\otimes U_{2})^{\intercal}(\log\Lambda_{1}\otimes
I_{a_{2}}+I_{a_{1}}\otimes\log\Lambda_{2})(U_{1}\otimes U_{2})\\
&  =(U_{1}\otimes U_{2})^{\intercal}(\log\Lambda_{1}\otimes I_{a_2}%
)(U_{1}\otimes U_{2})+(U_{1}\otimes U_{2})^{\intercal}(I_{a_{1}}\otimes
\log\Lambda_{2})(U_{1}\otimes U_{2})\\
&  =\log A_1\otimes I_{a_{2}}+I_{a_{1}}\otimes\log A_{2}.
\end{align*}
We now assume that this lemma is true for $v=k$. That is,
\begin{align}
&\log (A_1\otimes A_2\otimes \cdots \otimes A_k) \notag \\
&= \log A_1\otimes I_{a_2}\otimes \cdots \otimes I_{a_k}+ I_{a_1} \otimes \log A_2\otimes I_{a_3}\otimes \cdots \otimes I_{a_k}+\cdots +I_{a_1}\otimes I_{a_2}\otimes \cdots \otimes \log A_k.\label{alignPktrue}
\end{align}
We prove that the lemma holds for $v=k+1$. Let $A_{1-k}:=A_1\otimes \cdots \otimes A_k$ and $I_{a_1\cdots a_k}:=I_{a_1}\otimes \cdots \otimes I_{a_k}$.
\begin{align*}
&\log (A_1\otimes A_2\otimes \cdots \otimes A_k\otimes A_{k+1})=\log (A_{1-k}\otimes A_{k+1})=\log A_{1-k}\otimes I_{a_{k+1}}+I_{a_1\cdots a_k}\otimes\log A_{k+1}\\
&=\log A_1\otimes I_{a_2}\otimes \cdots \otimes I_{a_k}\otimes I_{a_{k+1}}+ I_{a_1} \otimes \log A_2\otimes I_{a_3}\otimes \cdots \otimes I_{a_k}\otimes I_{a_{k+1}}+\cdots +\\
& \qquad I_{a_1}\otimes I_{a_2}\otimes \cdots \otimes \log A_k\otimes I_{a_{k+1}}+I_{a_1}\otimes \cdots \otimes I_{a_k}\otimes\log A_{k+1},
\end{align*} 
where the third equality is due to (\ref{alignPktrue}). Thus the lemma holds for $v=k+1$. By induction, the lemma is true for $v=2,3,\ldots$.
\end{proof}

\bigskip

Next we provide two examples to illustrate the necessity of an identification restriction in order to separately identify log parameters.

\begin{example}
\label{ex2x2}
Suppose that $n_{1}, n_{2}=2$. We have
\begin{align*}
\log\Theta^{*}_{1}=\left(
\begin{array}
[c]{cc}%
a_{11} & a_{12}\\
a_{12} & a_{22}%
\end{array}
\right)  \qquad\log\Theta^{*}_{2}=\left(
\begin{array}
[c]{cc}%
b_{11} & b_{12}\\
b_{12} & b_{22}%
\end{array}
\right)
\end{align*}
Then we can calculate
\begin{align*}
\log \Theta^*=\log \Theta_1^*\otimes I_2 +I_2\otimes \log \Theta_2^*=\left( \begin{array}{cccc}
a_{11}+b_{11} & b_{12}              & a_{12}             & 0\\
b_{12}              & a_{11}+b_{22} & 0                      & a_{12}\\
a_{12}              & 0                       & a_{22}+b_{11} & b_{12}\\
0                       & a_{12}              & b_{12}              & a_{22}+b_{22}
\end{array}\right) .
\end{align*}
Log parameters $a_{12}, b_{12}$ can be separately identified from the off-diagonal entries of $\log \Theta^*$ because they appear separately. 
We now examine whether log parameters $a_{11}, b_{11}, a_{22}, b_{22}$ can be separately identified from diagonal entries of  $\log \Theta^*$. The answer is no. We have the following linear system
\begin{align*}
Ax:=\left( \begin{array}{cccc}
1 & 0 & 1 & 0 \\
1 & 0 & 0 & 1\\
0 & 1 & 1 & 0\\
0 & 1 & 0 & 1
\end{array}\right) \left( 
\begin{array}{c}
a_{11}\\
a_{22}\\
b_{11}\\
b_{22}
\end{array}\right) =
\left( \begin{array}{c}
\sbr[1]{\log \Theta^*}_{11}\\
\sbr[1]{\log \Theta^*}_{22}\\
\sbr[1]{\log \Theta^*}_{33}\\
\sbr[1]{\log \Theta^*}_{44}
\end{array}\right) =:d.
\end{align*}
Note that the rank of $A$ is 3. There are three effective equations and four unknowns; the linear system has infinitely many solutions for $x$. Hence  one identification restriction is needed to separately identify log parameters $a_{11}, b_{11}, a_{22}, b_{22}$. We choose to set $a_{11}=0$.
\end{example}

\begin{example}
\label{ex2x2x2}
Suppose that $n_{1}, n_{2}, n_3=2$. We have
\begin{align*}
\log\Theta^{*}_{1}=\left(
\begin{array}
[c]{cc}%
a_{11} & a_{12}\\
a_{12} & a_{22}%
\end{array}
\right)  \qquad\log\Theta^{*}_{2}=\left(
\begin{array}
[c]{cc}%
b_{11} & b_{12}\\
b_{12} & b_{22}%
\end{array}
\right) \qquad\log\Theta^{*}_{3}=\left(
\begin{array}
[c]{cc}%
c_{11} & c_{12}\\
c_{12} & c_{22}%
\end{array}
\right)
\end{align*}
Then we can calculate
 \begin{align*}
&\log \Theta^*=\log \Theta_1^*\otimes I_2\otimes I_2 +I_2\otimes \log \Theta_2^*\otimes I_2+I_2\otimes I_2\otimes \log \Theta_3^*=\\
&{\tiny\left( \begin{array}{cccccccc}
a_{11}+b_{11}+c_{11} & c_{12} & b_{12} & 0       & a_{12}  & 0 & 0      & 0\\
c_{12}              & a_{11}+b_{11}+c_{22} & 0 & b_{12} & 0 & a_{12} & 0 & 0\\
b_{12}              & 0                       & a_{11}+b_{22}+c_{11} & c_{12} & 0 & 0 & a_{12} & 0\\
0     & b_{12}   & c_{12}     & a_{11}+b_{22}+c_{22} & 0 & 0 & 0 & a_{12}\\
a_{12} & 0 & 0 & 0 & a_{22}+b_{11}+c_{11} & c_{12} & b_{12} & 0\\
0 & a_{12} & 0 & 0 & c_{12} & a_{22}+b_{11}+c_{22} & 0 & b_{12}\\
0 & 0 & a_{12} & 0 & b_{12} & 0 & a_{22}+b_{22}+c_{11} & c_{12}\\
0 & 0 & 0 & a_{12} & 0 & b_{12} & c_{12} & a_{22}+b_{22}+c_{22}
\end{array}\right) }.
\end{align*}
Log parameters $a_{12}, b_{12},c_{12}$ can be separately identified from off-diagonal entries of $\log \Theta^*$ because they appear separately. We now examine whether log parameters $a_{11}, b_{11}, c_{11},a_{22}, b_{22},c_{22}$ can be separately identified from diagonal entries of  $\log \Theta^*$. The answer is no. We have the following linear system
\begin{align*}
Ax:=\left( \begin{array}{cccccc}
1 & 0 & 1 & 0 & 1 & 0 \\
1 & 0 & 1 & 0 & 0 & 1\\
1 & 0 & 0 & 1 & 1 & 0\\
1 & 0 & 0 & 1 & 0 & 1\\
0 & 1 & 1 & 0 & 1 & 0\\
0 & 1 & 1 & 0 & 0 & 1\\
0 & 1 & 0 & 1 & 1 & 0\\
0 & 1 & 0 & 1 & 0 & 1
\end{array}\right) \left( 
\begin{array}{c}
a_{11}\\
a_{22}\\
b_{11}\\
b_{22}\\
c_{11}\\
c_{22}
\end{array}\right) =
\left( \begin{array}{c}
\sbr[1]{\log \Theta^*}_{11}\\
\sbr[1]{\log \Theta^*}_{22}\\
\sbr[1]{\log \Theta^*}_{33}\\
\sbr[1]{\log \Theta^*}_{44}\\
\sbr[1]{\log \Theta^*}_{55}\\
\sbr[1]{\log \Theta^*}_{66}\\
\sbr[1]{\log \Theta^*}_{77}\\
\sbr[1]{\log \Theta^*}_{88}
\end{array}\right) =:d.
\end{align*}
Note that the rank of $A$ is 4. There are four effective equations and six unknowns; the linear system has infinitely many solutions for $x$. Hence two identification restrictions are needed to separately identify log parameters $a_{11}, b_{11}, c_{11},a_{22}, b_{22},c_{22}$. We choose to set $a_{11}=b_{11}=0$.
\end{example}

\subsection{Shrinkage via Minimum Distance}
\label{sec two stage MD}

Recall that in the fill and shrink method, there is no guarantee that the estimated $\exp (\log \Theta^0)$ will be a correlation matrix. However, the estimated $D^{1/2}\exp (\log \Theta^0)D^{1/2}$ will be a covariance matrix. As mentioned in the main article, one can re-normalise the estimated covariance matrix to obtain a correlation matrix. The alternative method would be to shrink $\exp (\log \Theta_j^0)$ to a correlation matrix for $j=1,\ldots,v$.

This is easy for the $n=2^v$ case. Consider the $2\times 2$ submatrix $\Theta_1^0$, with $\log \Theta_1^0$ containing log parameters $\theta_1^0$. Given that $\Theta_1^0$ is a correlation matrix, then we have
\begin{align*}
\log\Theta_{1}^{0}  &  =\left(
\begin{array}
[c]{cc}%
1 & 1\\
1 & -1
\end{array}
\right)  \left(
\begin{array}
[c]{cc}%
\lambda_{1,1} & 0\\
0 & \lambda_{1,1}
\end{array}
\right)  \left(
\begin{array}
[c]{cc}%
1 & 1\\
1 & -1
\end{array}
\right)  \frac{1}{2}=\left( \begin{array}{cc}
\frac{1}{2}\lambda_{1,1}+\frac{1}{2}\lambda_{1,2} & \frac{1}{2}\lambda_{1,1}-\frac{1}{2}\lambda_{1,2}
 \\
\frac{1}{2}\lambda_{1,1}-\frac{1}{2}\lambda_{1,2} & \frac{1}{2}\lambda_{1,1}+\frac{1}{2}\lambda_{1,2}
\end{array}\right)
\end{align*} 
which implies that
\[\theta_1^0:=\left(\begin{array}{c}
\theta_{1,1}^0\\
\theta_{1,2}^0\\
\theta_{1,3}^0
\end{array} \right)=\frac{1}{2}\left(\begin{array}{cc}
1 & 1\\
1 & -1 \\
1 & 1
\end{array} \right)\left( \begin{array}{c}
\lambda_{1,1}\\
\lambda_{1,2}
\end{array}\right)=:C\left( \begin{array}{c}
\lambda_{1,1}\\
\lambda_{1,2}
\end{array}\right).   \]
Further, we have
\begin{align}
\Theta_{1}^{0}  &  =\left(
\begin{array}
[c]{cc}%
1 & 1\\
1 & -1
\end{array}
\right)  \left(
\begin{array}
[c]{cc}%
\exp(\lambda_{1,1}) & 0\\
0 & \exp(\lambda_{1,2})
\end{array}
\right)  \left(
\begin{array}
[c]{cc}%
1 & 1\\
1 & -1
\end{array}
\right)  \frac{1}{2}\notag\\
&=\left( \begin{array}{cc}
\frac{1}{2}\exp(\lambda_{1,1})+\frac{1}{2}\exp(\lambda_{1,2}) & \frac{1}{2}\exp(\lambda_{1,1})-\frac{1}{2}\exp(\lambda_{1,2})
 \\
\frac{1}{2}\exp(\lambda_{1,1})-\frac{1}{2}\exp(\lambda_{1,2}) & \frac{1}{2}\exp(\lambda_{1,1})+\frac{1}{2}\exp(\lambda_{1,2})
\end{array}\right).\label{align extra random 1}
\end{align}
By observing the diagonal elements of (\ref{align extra random 1}), we must have $\frac{1}{2}\exp(\lambda_{1,1})+\frac{1}{2}\exp(\lambda_{1,2})=1$ or equivalently $\lambda_{1,1}=\log \del[1]{2-\exp(\lambda_{1,2})}$. Also, we have
\begin{align}
\exp(\lambda_{1,1})-\exp(\lambda_{1,2})=2-2\exp(\lambda_{1,2})\in [-2,2],\label{align extra random 2}
\end{align}
by observing the off-diagonal elements of (\ref{align extra random 1}). From (\ref{align extra random 2}), we have $-\infty<\lambda_{1,2}\leq \log 2$.


We now consider shrinkage. Given $\theta_1^0\in \mathbb{R}^3$ we define $\lambda_{1,2}$ as the solution of the following population objective function
\[\min_{t\in (-\infty,\log 2]}\left\|\theta_1^0-C\left( \begin{array}{c}
\log (2-\exp(t))\\
t
\end{array}\right)  \right\|_2 \]
Thus define the estimator $\hat{\lambda}_{1,2}$ to be the solution of the following sample objective function
\[\min_{t\in (-\infty,\log 2]}\left\|\hat{\theta}_{1}-C\left( \begin{array}{c}
\log (2-\exp(t))\\
t
\end{array}\right)  \right\|_2, \]
where $\hat{\theta}_{1}$ is some fill and shrink estimator of $\theta_1^0$. Then we calculate $\hat{\lambda}_{1,1}=\log (2-\exp(\hat{\lambda}_{1,2}))$. This ensures that $\hat{\Theta}_{1, S}^0:=\Theta_{1}^{0}(\hat{\lambda}_{1,1},\hat{\lambda}_{1,2})$ is a correlation matrix. We can repeat this procedure for other sub-matrices $\{\Theta_j^0\}_{j=2}^v$. The final estimate
\[\hat{\Theta}_{S}^0=\hat{\Theta}_{1, S}^0\otimes \cdots \otimes \hat{\Theta}_{v, S}^0\]
will be a correlation matrix. We acknowledge that for higher dimensional sub-matrices, this approach starts to get problematic. We leave it for future research. 

\subsection{A Rate for $\|\hat{V}_T-V\|_{\infty}$}
\label{sec A5}

The following lemma characterises a rate for $\|\hat{V}_T-V\|_{\infty}$, which is used in the proofs of limiting distributions of our estimators.

\begin{lemma}
\label{lemma rate for hatV-V infty}
Let Assumptions \ref{assu subgaussian vector}(i) and \ref{assu mixing} be satisfied with $1/\gamma:=1/r_1+1/r_2>1$. Suppose $\log n=o(T^{\frac{\gamma}{2-\gamma}})$ if $n>T$.
Then
\[\|\hat{V}_{T}-V\|_{\infty}=O_{p}\del[3]{\sqrt{\frac{\log n}{T}}}.\]
\end{lemma}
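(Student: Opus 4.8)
The plan is to reduce the matrix-norm statement to a scalar large-deviation problem handled entry by entry, and then to recombine via a union bound. Write the $(k,\ell)$ entry of $\hat{V}_T$ as a sample average $T^{-1}\sum_{t=1}^T \xi_{k\ell,t}$ of products of the (transformed) observations, so that $V$ is its population counterpart and
\[
\|\hat{V}_T - V\|_\infty = \max_{k,\ell}\Bigl| \tfrac{1}{T}\sum_{t=1}^T\bigl(\xi_{k\ell,t}-\mathbb{E}\,\xi_{k\ell,t}\bigr)\Bigr|.
\]
The number of distinct entries is of order $n^2$, so it suffices to produce, for each fixed $(k,\ell)$, an exponential tail bound on the centred average and then pay a factor $n^2$ for the maximum.

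First I would verify that, for each $(k,\ell)$, the centred sequence $\{\xi_{k\ell,t}-\mathbb{E}\,\xi_{k\ell,t}\}_t$ fits the hypotheses of a Bernstein-type inequality for dependent data. Assumption \ref{assu subgaussian vector}(i) controls the tails of the underlying vector, and since each $\xi_{k\ell,t}$ is a product of its components, the summands inherit a sub-Weibull tail of index $r_1$ (products degrade the sub-Gaussian index to a sub-exponential one); Assumption \ref{assu mixing} transmits the strong-mixing property, with coefficients governed by $r_2$. These are exactly the ingredients of the Merlev\`ede--Peligrad--Rio (2011) Bernstein inequality, in which the large-deviation regime is indexed by the composite parameter $\gamma$ with $1/\gamma=1/r_1+1/r_2$; the hypothesis $\gamma\in(0,1)$ places us in the genuinely sub-exponential case.

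Next I would apply that inequality with the deviation level $x=M\sqrt{T\log n}$ for a large constant $M$. The bound splits into a Gaussian (Bernstein) term of order $\exp(-cM^2\log n)=n^{-cM^2}$ and one or more heavy-tail terms of the form $T\exp(-c'(T\log n)^{\gamma/2})$. Summing over the $O(n^2)$ entries, the Gaussian contribution is $n^{2-cM^2}$, which is driven to zero by taking $M$ large enough. The role of the dimensionality restriction $\log n=o(T^{\gamma/(2-\gamma)})$ is precisely to guarantee that we sit in this Gaussian regime: a short computation shows it is equivalent to $T^{\gamma/2}/(\log n)^{(2-\gamma)/2}\to\infty$, which forces $(T\log n)^{\gamma/2}\gg\log n$ and hence makes the heavy-tail terms negligible relative to the Gaussian one even after the union bound. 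Combining the two bounds gives $\Pr(\|\hat{V}_T-V\|_\infty> M\sqrt{\log n/T})\to 0$, which is the claimed $O_p(\sqrt{\log n/T})$ rate.

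The main obstacle is not the union bound, which is routine, but the bookkeeping around the Merlev\`ede--Peligrad--Rio inequality: one must (i) pin down the sub-Weibull index of the products $\xi_{k\ell,t}$ from Assumption \ref{assu subgaussian vector}(i) and confirm the mixing coefficients of the derived sequence still decay at the required rate, and (ii) identify, among the several terms of the multi-term tail bound, which one is binding, and check that the stated growth condition on $\log n$ is exactly what is needed to keep the non-Gaussian terms asymptotically dominated uniformly over all $n^2$ entries. Care is also needed in the case $n\le T$, where the condition on $\log n$ is vacuous and the Gaussian term controls the bound outright.
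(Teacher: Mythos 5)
Your large-deviation engine---sub-Weibull tails for products of components, a Merlev\`ede--Peligrad--Rio Bernstein inequality for strongly mixing data, a union bound over entries, and the observation that $\log n=o(T^{\gamma/(2-\gamma)})$ is exactly what keeps the non-Gaussian tail terms dominated---is indeed the machinery the paper uses (its Lemma \ref{lemmaexponentialtail}, Theorem \ref{thmbernsteininequality} and Lemma \ref{lemmabernsteinrate}) for the dominant term. The genuine gap is in your opening reduction: the entries of $\hat{V}_T$ are \emph{not} sample averages whose population counterpart is $V$. Writing $\tilde{y}_{t,i}=y_{t,i}-\bar{y}_i$ and $\dot{y}_{t,i}=y_{t,i}-\mu_i$, the objects being compared are
\begin{align*}
\hat{V}_{T,i,j,k,\ell}&=\frac{1}{T}\sum_{t=1}^{T}\tilde{y}_{t,i}\tilde{y}_{t,j}\tilde{y}_{t,k}\tilde{y}_{t,\ell}-\Bigl(\frac{1}{T}\sum_{t=1}^{T}\tilde{y}_{t,i}\tilde{y}_{t,j}\Bigr)\Bigl(\frac{1}{T}\sum_{t=1}^{T}\tilde{y}_{t,k}\tilde{y}_{t,\ell}\Bigr),\\
V_{i,j,k,\ell}&=\mathbb{E}[\dot{y}_{t,i}\dot{y}_{t,j}\dot{y}_{t,k}\dot{y}_{t,\ell}]-\mathbb{E}[\dot{y}_{t,i}\dot{y}_{t,j}]\,\mathbb{E}[\dot{y}_{t,k}\dot{y}_{t,\ell}].
\end{align*}
Two features break your scheme. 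First, the data are centred at the sample mean $\bar{y}$, not at $\mu$, so each summand $\tilde{y}_{t,i}\tilde{y}_{t,j}\tilde{y}_{t,k}\tilde{y}_{t,\ell}$ depends on the entire sample through $\bar{y}$: the resulting array is neither stationary nor mixing in $t$, and the Bernstein inequality cannot be applied to it directly. Second, $\hat{V}_T$ contains a product of two sample averages, and a difference of products minus a product of expectations is not a centred sample average, so it too falls outside a single application of the inequality.

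The paper resolves both issues with a four-term triangle-inequality decomposition before any concentration argument: (A) the fourth-moment average in the $\dot{y}$'s minus its expectation, and (B) the product of second-moment averages in the $\dot{y}$'s minus the product of expectations---these are where your Bernstein-plus-union-bound machinery is actually deployed, with (B) additionally requiring an add-and-subtract so that one factor is $O_p(1)$ and the other is a centred average of rate $O_p(\sqrt{\log n/T})$; then (C) and (D), the errors from replacing $\mu$ by $\bar{y}$ in each piece, which are controlled by $\max_{1\leq i\leq n}|\bar{y}_i-\mu_i|=O_p(\sqrt{\log n/T})$ together with elementary expansions (the term (D) is in fact $O_p(\log n/T)$). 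None of these steps is deep, but they are necessary, and your proposal as written makes no provision for them. A minor further slip: $\hat{V}_T$ is $n^2\times n^2$, so the maximum runs over $O(n^4)$ entries indexed by quadruples $(i,j,k,\ell)$, not $O(n^2)$; this costs only constants since $\log (n^4)\asymp\log n$, but the bookkeeping should match the actual index set.
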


\begin{proof}
Let $\tilde{y}_{t,i}$ denote $y_{t,i}-\bar{y}_i$, similarly for $\tilde{y}_{t,j},\tilde{y}_{t,k},\tilde{y}_{t,\ell}$, where $i,j,k,\ell=1,\ldots,n$. Let $\dot{y}_{t,i}$ denote $y_{t,i}-\mu_i$, similarly for $\dot{y}_{t,j},\dot{y}_{t,k},\dot{y}_{t,\ell}$ where $i,j,k,\ell=1,\ldots,n$.
\begin{align}
&\|\hat{V}_T-V\|_{\infty}:=\max_{1\leq a,b\leq n^2}|\hat{V}_{T,a,b}-V_{a,b}|=\max_{1\leq i,j,k,\ell\leq n}|\hat{V}_{T,i,j,k,\ell}-V_{i,j,k,\ell}| \notag\\
&\leq  \max_{1\leq i,j,k,\ell\leq n}\envert[3]{ \frac{1}{T}\sum_{t=1}^{T}\tilde{y}_{t,i}\tilde{y}_{t,j}\tilde{y}_{t,k}\tilde{y}_{t,\ell}-\frac{1}{T}\sum_{t=1}^{T}\dot{y}_{t,i}\dot{y}_{t,j}\dot{y}_{t,k}\dot{y}_{t,\ell}} \label{align hatV-V infinity part C}\\
&+ \max_{1\leq i,j,k,\ell\leq n}\envert[3]{ \frac{1}{T}\sum_{t=1}^{T}\dot{y}_{t,i}\dot{y}_{t,j}\dot{y}_{t,k}\dot{y}_{t,\ell}-\mathbb{E}[\dot{y}_{t,i}\dot{y}_{t,j}\dot{y}_{t,k}\dot{y}_{t,\ell}]} \label{align hatV-V infinity part A}\\
& +\max_{1\leq i,j,k,\ell\leq n}\envert[3]{\del[3]{ \frac{1}{T}\sum_{t=1}^{T}\tilde{y}_{t,i}\tilde{y}_{t,j}} \del[3]{ \frac{1}{T}\sum_{t=1}^{T}\tilde{y}_{t,k}\tilde{y}_{t,\ell}}-\del[3]{ \frac{1}{T}\sum_{t=1}^{T}\dot{y}_{t,i}\dot{y}_{t,j}} \del[3]{ \frac{1}{T}\sum_{t=1}^{T}\dot{y}_{t,k}\dot{y}_{t,\ell}}}  \label{align hatV-V infinity part D}\\
& +\max_{1\leq i,j,k,\ell\leq n}\envert[3]{\del[3]{ \frac{1}{T}\sum_{t=1}^{T}\dot{y}_{t,i}\dot{y}_{t,j}} \del[3]{ \frac{1}{T}\sum_{t=1}^{T}\dot{y}_{t,k}\dot{y}_{t,\ell}}-\mathbb{E}[\dot{y}_{t,i}\dot{y}_{t,j}]\mathbb{E}[\dot{y}_{t,k}\dot{y}_{t,\ell}]}  \label{align hatV-V infinity part B}
\end{align}

\subsubsection*{Display (\ref{align hatV-V infinity part A})}

Assumption \ref{assu subgaussian vector}(i) says that for all $t$, there exist absolute constants $K_1>1, K_2>0, r_1>0$ such that
\[\mathbb{E}\sbr[2]{\exp\del [1]{K_2 |y_{t,i}|^{r_1}}}\leq K_1\qquad \text{for all }i=1,\ldots,n.\]
By repeated using Lemma \ref{lemmaexponentialtail} in Appendix \ref{secArateofconvergence}, we have for all $i,j,k,\ell=1,2,\ldots,n$, every $\epsilon\geq 0$, absolute constants $b_1,c_1,b_2,c_2,b_3,c_3>0$ such that
\begin{align*}
\mathbb{P}(|y_{t,i}|\geq \epsilon)& \leq \exp \sbr[1]{1-(\epsilon/b_1)^{r_1}}\\
\mathbb{P}(|\dot{y}_{t,i}|\geq \epsilon)& \leq \exp \sbr[1]{1-(\epsilon/c_1)^{r_1}}\\
\mathbb{P}(|\dot{y}_{t,i}\dot{y}_{t,j}|\geq \epsilon)& \leq \exp \sbr[1]{1-(\epsilon/b_2)^{r_3}}\\
\mathbb{P}(|\dot{y}_{t,i}\dot{y}_{t,j}-\mathbb{E}[\dot{y}_{t,i}\dot{y}_{t,j}]|\geq \epsilon)& \leq \exp \sbr[1]{1-(\epsilon/c_2)^{r_3}}\\
\mathbb{P}(|\dot{y}_{t,i}\dot{y}_{t,j}\dot{y}_{t,k}\dot{y}_{t,\ell}|\geq \epsilon)& \leq \exp \sbr[1]{1-(\epsilon/b_3)^{r_4}}\\
\mathbb{P}(|\dot{y}_{t,i}\dot{y}_{t,j}\dot{y}_{t,k}\dot{y}_{t,\ell}-\mathbb{E}[\dot{y}_{t,i}\dot{y}_{t,j}\dot{y}_{t,k}\dot{y}_{t,\ell}]|\geq \epsilon)& \leq \exp \sbr[1]{1-(\epsilon/c_3)^{r_4}}
\end{align*}
where $r_3\in (0, r_1/2]$ and $r_4\in (0, r_1/4]$. Use the assumption $1/r_1+1/r_2>1$ to invoke Theorem \ref{thmbernsteininequality} followed by Lemma \ref{lemmabernsteinrate} in Appendix \ref{sec oldappendixB} to get

%
\begin{equation}
\label{eqn rate for hatV-V infinity part A}
\max_{1\leq i,j,k,\ell\leq n}\envert[3]{ \frac{1}{T}\sum_{t=1}^{T}\dot{y}_{t,i}\dot{y}_{t,j}\dot{y}_{t,k}\dot{y}_{t,\ell}-\mathbb{E}\dot{y}_{t,i}\dot{y}_{t,j}\dot{y}_{t,k}\dot{y}_{t,\ell}}=O_p\del[3]{ \sqrt{\frac{\log n}{T}}}.
\end{equation}

\subsubsection*{Display (\ref{align hatV-V infinity part B})}
We now consider (\ref{align hatV-V infinity part B}).
\begin{align}
&\max_{1\leq i,j,k,\ell\leq n}\envert[3]{\del[3]{ \frac{1}{T}\sum_{t=1}^{T}\dot{y}_{t,i}\dot{y}_{t,j}} \del[3]{ \frac{1}{T}\sum_{t=1}^{T}\dot{y}_{t,k}\dot{y}_{t,\ell}}-\mathbb{E}[\dot{y}_{t,i}\dot{y}_{t,j}]\mathbb{E}[\dot{y}_{t,k}\dot{y}_{t,\ell}]}\notag\\
&\leq \max_{1\leq i,j,k,\ell\leq n}\envert[3]{\del[3]{ \frac{1}{T}\sum_{t=1}^{T}\dot{y}_{t,i}\dot{y}_{t,j}}\del[3]{ \frac{1}{T}\sum_{t=1}^{T}\dot{y}_{t,k}\dot{y}_{t,\ell}-\mathbb{E}[\dot{y}_{t,k}\dot{y}_{t,\ell}]}}\label{align hatV-V infinity part Ba}\\
&\qquad+\max_{1\leq i,j,k,\ell\leq n}\envert[3]{\mathbb{E}[\dot{y}_{t,k}\dot{y}_{t,\ell}]\del[3]{ \frac{1}{T}\sum_{t=1}^{T}\dot{y}_{t,i}\dot{y}_{t,j}-\mathbb{E}[\dot{y}_{t,i}\dot{y}_{t,j}]} }\label{align hatV-V infinity part Bb}.
\end{align}
Consider (\ref{align hatV-V infinity part Ba}).
\begin{align*}
&\max_{1\leq i,j,k,\ell\leq n}\envert[3]{\del[3]{ \frac{1}{T}\sum_{t=1}^{T}\dot{y}_{t,i}\dot{y}_{t,j}} \del[3]{ \frac{1}{T}\sum_{t=1}^{T}\dot{y}_{t,k}\dot{y}_{t,\ell}-\mathbb{E}\dot{y}_{t,k}\dot{y}_{t,\ell}}}\notag\\
&\leq \max_{1\leq i,j\leq n}\del[3]{ \envert[3]{ \frac{1}{T}\sum_{t=1}^{T}\dot{y}_{t,i}\dot{y}_{t,j}-\mathbb{E}\dot{y}_{t,i}\dot{y}_{t,j}} +\left| \mathbb{E}\dot{y}_{t,i}\dot{y}_{t,j}\right|}  \max_{1\leq k,\ell\leq n} \envert[3]{ \frac{1}{T}\sum_{t=1}^{T}\dot{y}_{t,k}\dot{y}_{t,\ell}-\mathbb{E}\dot{y}_{t,k}\dot{y}_{t,\ell}}\notag\\
&=\del[3]{ O_p\del[3]{ \sqrt{\frac{\log n}{T}}}+O(1)} O_p\del[3]{ \sqrt{\frac{\log n}{T}}}=O_p\del[3]{ \sqrt{\frac{\log n}{T}}}
\end{align*}
where the first equality is due to Lemma \ref{lemmaexponentialtail}(ii) in Appendix \ref{secArateofconvergence}, Theorem \ref{thmbernsteininequality} and Lemma \ref{lemmabernsteinrate} in Appendix \ref{sec oldappendixB}. Now consider (\ref{align hatV-V infinity part Bb}).
\begin{align*}
&\max_{1\leq i,j,k,\ell\leq n}\envert[3]{\mathbb{E}[\dot{y}_{t,k}\dot{y}_{t,\ell}]\del[3]{ \frac{1}{T}\sum_{t=1}^{T}\dot{y}_{t,i}\dot{y}_{t,j}-\mathbb{E}[\dot{y}_{t,i}\dot{y}_{t,j}]}} \notag\\
&\leq \max_{1\leq k,\ell\leq n}|\mathbb{E}[\dot{y}_{t,k}\dot{y}_{t,\ell}]|\max_{1\leq i,j\leq n}\envert[3]{ \frac{1}{T}\sum_{t=1}^{T}\dot{y}_{t,i}\dot{y}_{t,j}-\mathbb{E}\dot{y}_{t,i}\dot{y}_{t,j}}=O_p\del[3]{ \sqrt{\frac{\log n}{T}}}
\end{align*}
where the equality is due to Lemma \ref{lemmaexponentialtail}(ii) in Appendix \ref{secArateofconvergence}, Theorem \ref{thmbernsteininequality} and Lemma \ref{lemmabernsteinrate} in Appendix \ref{sec oldappendixB}. Thus
\begin{equation}
\label{align rate for hatV-V infinity part Ba}
\max_{1\leq i,j,k,\ell\leq n}\envert[3]{\del[3]{ \frac{1}{T}\sum_{t=1}^{T}\dot{y}_{t,i}\dot{y}_{t,j}} \del[3]{ \frac{1}{T}\sum_{t=1}^{T}\dot{y}_{t,k}\dot{y}_{t,\ell}}-\mathbb{E}[\dot{y}_{t,i}\dot{y}_{t,j}]\mathbb{E}[\dot{y}_{t,k}\dot{y}_{t,\ell}]}=O_p\del[3]{ \sqrt{\frac{\log n}{T}}}.
\end{equation}

\subsubsection*{Display (\ref{align hatV-V infinity part C})}

We first give a rate for $\max_{1\leq i\leq n}|\bar{y}_i-\mu_i|$. The index $i$ is arbitrary and could be replaced with $j,k,\ell$. 
%
%
Invoking Lemma \ref{lemmabernsteinrate} in Appendix \ref{sec oldappendixB}, we have
\begin{equation}
\label{eqn barxi minus mui}
\max_{1\leq i\leq n}|\bar{y}_i-\mu_i|=\max_{1\leq i\leq n}\envert[3]{\frac{1}{T}\sum_{t=1}^{T}(y_{t,i}-\mu_i)}=O_p\del [3]{ \sqrt{\frac{\log n}{T}}}.
\end{equation}
Then we also have
\begin{equation}
\label{eqn barxi Op1}
\max_{1\leq i\leq n}|\bar{y}_i|=\max_{1\leq i\leq n}|\bar{y}_i-\mu_i+\mu_i|\leq \max_{1\leq i\leq n}|\bar{y}_i-\mu_i|+\max_{1\leq i\leq n}|\mu_i|=O_p\del [3]{ \sqrt{\frac{\log n}{T}}}+O(1)=O_p(1).
\end{equation}

We now consider (\ref{align hatV-V infinity part C}):
\begin{align*}
 \max_{1\leq i,j,k,\ell\leq n}\envert[3]{ \frac{1}{T}\sum_{t=1}^{T}\tilde{y}_{t,i}\tilde{y}_{t,j}\tilde{y}_{t,k}\tilde{y}_{t,\ell}-\frac{1}{T}\sum_{t=1}^{T}\dot{y}_{t,i}\dot{y}_{t,j}\dot{y}_{t,k}\dot{y}_{t,\ell}}.
\end{align*}
With expansion, simplification and recognition that the indices $i,j,k,\ell$ are completely symmetric, we can bound (\ref{align hatV-V infinity part C}) by
\begin{align}
& \max_{1\leq i,j,k,\ell\leq n}\envert{\bar{y}_i\bar{y}_j\bar{y}_k\bar{y}_{\ell}-\mu_i\mu_j\mu_k\mu_{\ell}}\label{align 4-1}\\
&+4 \max_{1\leq i,j,k,\ell\leq n} \envert[2]{\bar{y}_i\del[1]{\bar{y}_j\bar{y}_k\bar{y}_{\ell}-\mu_j\mu_k\mu_{\ell}}}\label{align 4-2}\\
&+6 \max_{1\leq i,j,k,\ell\leq n} \envert[3]{\del[3]{\frac{1}{T}\sum_{t=1}^{T}y_{t,i}y_{t,j}}\del[1]{\bar{y}_k\bar{y}_{\ell}-\mu_k\mu_{\ell}}}\label{align 4-3}\\
&+4 \max_{1\leq i,j,k,\ell\leq n} \envert[3]{\del[3]{\frac{1}{T}\sum_{t=1}^{T}y_{t,i}y_{t,j}y_{t,k}}\del[1]{\bar{y}_{\ell}-\mu_{\ell}}}\label{align 4-4}.
\end{align}
We consider (\ref{align 4-1}) first. (\ref{align 4-1}) can be bounded by repeatedly invoking triangular inequalities (e.g., inserting terms like $\mu_i\bar{y}_j\bar{y}_k\bar{y}_{\ell}$) using Lemma \ref{lemmaexponentialtail}(ii) in Appendix \ref{secArateofconvergence}, (\ref{eqn barxi Op1})  and (\ref{eqn barxi minus mui}). (\ref{align 4-1}) is of order $O_p(\sqrt{\log n/T})$.  (\ref{align 4-2}) is of order $O_p(\sqrt{\log n/T})$ by a similar argument.  (\ref{align 4-3}) and  (\ref{align 4-4}) are of the same order $O_p(\sqrt{\log n/T})$ using a similar argument provided that both $\max_{1\leq i,j\leq n}|\sum_{t=1}^{T}y_{t,i}y_{t,j}|/T$ and $\max_{1\leq i,j,k\leq n}|\sum_{t=1}^{T}y_{t,i}y_{t,j}y_{t,k}|/T$ are $O_p(1)$; these follow from Lemma \ref{lemmaexponentialtail}(ii) in Appendix \ref{secArateofconvergence}, Theorem \ref{thmbernsteininequality} and Lemma \ref{lemmabernsteinrate} in Appendix \ref{sec oldappendixB}. Thus
\begin{equation}
\label{eqn rate for hatV-V infinity part C}
 \max_{1\leq i,j,k,\ell\leq n}\envert[3]{ \frac{1}{T}\sum_{t=1}^{T}\tilde{y}_{t,i}\tilde{y}_{t,j}\tilde{y}_{t,k}\tilde{y}_{t,\ell}-\frac{1}{T}\sum_{t=1}^{T}\dot{y}_{t,i}\dot{y}_{t,j}\dot{y}_{t,k}\dot{y}_{t,\ell}}=O_p(\sqrt{\log n/T}).
\end{equation}

\subsubsection*{Display (\ref{align hatV-V infinity part D})}

We now consider (\ref{align hatV-V infinity part D}).
\begin{align}
& \max_{1\leq i,j,k,\ell\leq n}\envert[3]{\del[3]{ \frac{1}{T}\sum_{t=1}^{T}\tilde{y}_{t,i}\tilde{y}_{t,j}} \del[3]{ \frac{1}{T}\sum_{t=1}^{T}\tilde{y}_{t,k}\tilde{y}_{t,\ell}}-\del[3]{ \frac{1}{T}\sum_{t=1}^{T}\dot{y}_{t,i}\dot{y}_{t,j}} \del[3]{ \frac{1}{T}\sum_{t=1}^{T}\dot{y}_{t,k}\dot{y}_{t,\ell}}} \notag \\
&\leq \max_{1\leq i,j,k,\ell\leq n}\envert[3]{\del[3]{ \frac{1}{T}\sum_{t=1}^{T}\tilde{y}_{t,i}\tilde{y}_{t,j}} \del[3]{ \frac{1}{T}\sum_{t=1}^{T}\del[1]{\tilde{y}_{t,k}\tilde{y}_{t,\ell}-\dot{y}_{t,k}\dot{y}_{t,\ell}}}}\label{align partDa}\\
&\qquad+\max_{1\leq i,j,k,\ell\leq n}\envert[3]{\del[3]{ \frac{1}{T}\sum_{t=1}^{T}\dot{y}_{t,k}\dot{y}_{t,\ell}} \del[3]{ \frac{1}{T}\sum_{t=1}^{T}\del[1]{\tilde{y}_{t,i}\tilde{y}_{t,j}-\dot{y}_{t,i}\dot{y}_{t,j}}}}\label{align partDb}
\end{align}
It suffices to give a bound for (\ref{align partDa}) as the bound for (\ref{align partDb}) is of the same order and follows through similarly. First, it is easy to show that $\max_{1\leq i,j\leq n}|\frac{1}{T}\sum_{t=1}^{T}\tilde{y}_{t,i}\tilde{y}_{t,j}|=\max_{1\leq i,j\leq n}|\frac{1}{T}\sum_{t=1}^{T}y_{t,i}y_{t,j}-\bar{y}_i\bar{y}_j|=O_p(1)$ (using Lemma \ref{lemmaexponentialtail}(ii) in Appendix \ref{secArateofconvergence} and Lemma \ref{lemmabernsteinrate} in Appendix \ref{sec oldappendixB}). Next
\begin{align}
& \max_{1\leq k,\ell\leq n}\envert[3]{  \frac{1}{T}\sum_{t=1}^{T}\del[1]{\tilde{y}_{t,k}\tilde{y}_{t,\ell}-\dot{y}_{t,k}\dot{y}_{t,\ell}}}= \max_{1\leq k,\ell\leq n}\envert[3]{  -(\bar{y}_k-\mu_k)(\bar{y}_{\ell}-\mu_{\ell})}=O_p\del [3]{\frac{\log n}{T}}.\label{eqn rate for hatV-V infinity part D}
\end{align}

The lemma follows after summing up the rates for (\ref{eqn rate for hatV-V infinity part A}), (\ref{align rate for hatV-V infinity part Ba}), (\ref{eqn rate for hatV-V infinity part C}) and (\ref{eqn rate for hatV-V infinity part D}).
\end{proof}

\subsection{Proof of Theorem \ref{thm asymptotic normality MD when D is unknown}}
\label{sec A6}

In this subsection, we give a proof for Theorem \ref{thm asymptotic normality MD when D is unknown}. We will first give a preliminary lemma leading to the proof of this theorem.

\begin{lemma}
Let Assumptions \ref{assu subgaussian vector}(i), \ref{assu mixing}, \ref{assu n indexed by T}(i) and  \ref{assu about D and Dhat}(i) hold with $1/r_1+1/r_2>1$. Then we have 
\begin{equation}
\label{eqn spectral norm of P and Phat}
\|P\|_{\ell_{2}}=O(1),\qquad\|\hat{P}_{T}\|_{\ell_{2}}=O_{p}(1),\qquad\|\hat{P}_{T}-P\|_{\ell_{2}}=O_{p}\del[3]{ \sqrt{\frac{n}{T}}}.
\end{equation}
\end{lemma}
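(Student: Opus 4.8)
The plan is to read $P$ as a population $n\times n$ second-moment-type matrix with bounded spectral norm and $\hat{P}_T$ as its sample analogue (an average over $t$ of quadratic-form-generating terms in $y_t$), and to establish the three bounds in the natural order: first the deterministic bound on $\|P\|_{\ell_2}$, then the genuinely hard stochastic bound on the difference $\|\hat{P}_T-P\|_{\ell_2}$, and finally $\|\hat{P}_T\|_{\ell_2}$ by the triangle inequality. The bound $\|P\|_{\ell_2}=O(1)$ follows directly from the model primitives: the eigenvalues of $P$ are bounded above uniformly in $n$ because $P$ is built from the population second moments of $y_t$, whose covariance has bounded spectral norm under Assumptions \ref{assu subgaussian vector}(i) and \ref{assu about D and Dhat}(i) (the diagonal scaling being bounded away from $0$ and $\infty$). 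This step needs no probabilistic input.

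The crux is $\|\hat{P}_T-P\|_{\ell_2}=O_p(\sqrt{n/T})$. The naive route, bounding the spectral norm by the Frobenius norm and then by $n$ times the elementwise rate $\sqrt{\log n/T}$ of Lemma \ref{lemma rate for hatV-V infty}, only yields $O_p(n\sqrt{\log n/T})$, which is far too weak. Instead I would use a covering ($\epsilon$-net) argument. Writing $\Delta_T:=\hat{P}_T-P$ (symmetric, so that $\|\Delta_T\|_{\ell_2}=\sup_{\|u\|_2=1}|u^\intercal\Delta_T u|$), I would fix a $1/4$-net $\mathcal{N}$ of the unit sphere $S^{n-1}$ with $|\mathcal{N}|\leq 9^n$ and use the standard bound $\|\Delta_T\|_{\ell_2}\leq 2\max_{u\in\mathcal{N}}|u^\intercal\Delta_T u|$. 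For each fixed $u$, the scalar $u^\intercal\Delta_T u=T^{-1}\sum_{t=1}^T\{q_t(u)-\mathbb{E}q_t(u)\}$ is a centred average of a quadratic form $q_t(u)$ in $\dot{y}_t$; since $u^\intercal\dot{y}_t$ is sub-Gaussian uniformly in $u$ under Assumption \ref{assu subgaussian vector}(i), $q_t(u)$ is sub-exponential with constants uniform in $u$, and the sequence inherits the mixing of Assumption \ref{assu mixing}. I would then apply the Bernstein inequality Theorem \ref{thmbernsteininequality} (together with Lemma \ref{lemmaexponentialtail} to produce the tail constants) to $u^\intercal\Delta_T u$ to get $\mathbb{P}(|u^\intercal\Delta_T u|>s)\leq\exp(-cTs^2)$ in the regime $s=O(1)$, and take a union bound over $\mathcal{N}$.

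Choosing $s\asymp\sqrt{n/T}$ makes the union bound $|\mathcal{N}|\cdot\exp(-cTs^2)\leq 9^n\exp(-c'n)$ negligible once the constant in $s$ is large enough, giving $\max_{u\in\mathcal{N}}|u^\intercal\Delta_T u|=O_p(\sqrt{n/T})$ and hence the claimed rate. The main obstacle is exactly this interplay between the $e^{cn}$ cardinality of the net and the strength of the Bernstein tail: the deviation $s\sqrt{T}\asymp\sqrt{n}$ must stay inside the sub-Gaussian (quadratic-exponent) regime of the mixing Bernstein inequality rather than its sub-exponential (linear) tail, which is precisely where the mixing condition $1/r_1+1/r_2>1$ and Assumption \ref{assu n indexed by T}(i) (ensuring $n=O(T)$, so $n/T$ is bounded) enter; care is also required to control the Bernstein variance proxy uniformly over $u\in\mathcal{N}$. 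Should $\hat{P}_T$ fail to be symmetric, the identical argument runs with nets on both sides, $|\mathcal{N}|^2\leq 81^n$, producing the same rate.

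Finally, $\|\hat{P}_T\|_{\ell_2}=O_p(1)$ is immediate from the first two bounds and the triangle inequality, $\|\hat{P}_T\|_{\ell_2}\leq\|P\|_{\ell_2}+\|\hat{P}_T-P\|_{\ell_2}=O(1)+O_p(\sqrt{n/T})=O_p(1)$, using once more that $n/T$ is bounded under Assumption \ref{assu n indexed by T}(i).
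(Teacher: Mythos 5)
Your proposal starts from a misreading of what $P$ and $\hat{P}_T$ actually are, and this sinks the whole strategy. In this paper they are not an $n\times n$ population second-moment matrix and its sample average: they are the $n^2\times n^2$ matrices $P=I_{n^2}-D_nD_n^{+}(I_n\otimes\Theta)M_d$ and $\hat{P}_T=I_{n^2}-D_nD_n^{+}(I_n\otimes\hat{\Theta}_T)M_d$, i.e.\ deterministic algebraic transforms of the population correlation matrix $\Theta$ and of its estimator $\hat{\Theta}_T$. In particular, $u^{\intercal}(\hat{P}_T-P)u$ is \emph{not} a centred average $T^{-1}\sum_{t}\{q_t(u)-\mathbb{E}q_t(u)\}$ of quadratic forms in $y_t$: the estimator $\hat{\Theta}_T$ involves normalisation by estimated standard deviations and enters through a product with $D_nD_n^{+}$ and $M_d$, so the Bernstein inequality of Theorem \ref{thmbernsteininequality} cannot be applied to it as you propose, and your $\epsilon$-net/union-bound scheme has no sum of weakly dependent mean-zero terms to act on. Even taken on its own terms, the scheme is fragile for mixing data: the dependent-data Bernstein bound carries a sub-exponential tail component, and making a union bound over $9^{n}$ net points work at deviation $\sqrt{n/T}$ would force growth restrictions on $n$ relative to $T$ that are much stronger than the assumptions of the lemma (the paper's elementwise Lemma \ref{lemma rate for hatV-V infty} only needs control of $\log n$, precisely because it never takes a union bound over exponentially many directions).

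The paper's actual proof is short and purely algebraic, because the hard probabilistic work is already packaged in Theorem \ref{thm main rate of convergence}(i), which gives $\|\hat{\Theta}_T-\Theta\|_{\ell_2}=O_p(\sqrt{n/T})$. One writes $\hat{P}_T-P=-D_nD_n^{+}\bigl(I_n\otimes(\hat{\Theta}_T-\Theta)\bigr)M_d$, and submultiplicativity together with the standard spectral-norm bounds for $D_n$, $D_n^{+}$, $M_d$ and the identity $\|I_n\otimes A\|_{\ell_2}=\|A\|_{\ell_2}$ immediately yields $\|\hat{P}_T-P\|_{\ell_2}\leq C\|\hat{\Theta}_T-\Theta\|_{\ell_2}=O_p(\sqrt{n/T})$. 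Likewise $\|P\|_{\ell_2}\leq 1+C\|\Theta\|_{\ell_2}=O(1)$ and $\|\hat{P}_T\|_{\ell_2}\leq 1+C\|\hat{\Theta}_T\|_{\ell_2}=O_p(1)$, the latter using Lemma \ref{prop mini eigenvalue}(ii) for $\|\hat{\Theta}_T\|_{\ell_2}=O_p(1)$ (your triangle-inequality route would also work once the difference bound is available). So what is missing in your attempt is not a technical refinement but the identification of the objects: once $P$ and $\hat{P}_T$ are read correctly, no new concentration argument is needed at all, and attempting one from scratch amounts to re-proving Theorem \ref{thm main rate of convergence}(i) on the wrong object.
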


\begin{proof}
The proofs for $\|P\|_{\ell_2}=O(1)$ and $\|\hat{P}_T\|_{\ell_2}=O_p(1)$ are exactly the same, so we only give the proof for the latter.
\begin{align*}
& \|\hat{P}_T\|_{\ell_2}=\|I_{n^2}-D_nD_n^{+}(I_n\otimes\hat{\Theta}_T)M_d\|_{\ell_2}\leq 1+\|D_nD_n^{+}(I_n\otimes\hat{\Theta}_T)M_d\|_{\ell_2}\\
&\leq  1+\|D_n\|_{\ell_2}\|D_n^{+}\|_{\ell_2}\|I_n\otimes\hat{\Theta}_T\|_{\ell_2}\|M_d\|_{\ell_2}=1+2\|I_n\|_{\ell_2}\|\hat{\Theta}_T\|_{\ell_2}=O_p(1)
\end{align*}
where the second equality is due to (\ref{eqn spectral norm for Dnplus and Dn}) and Lemma \ref{lemma l2norm of kronecker product} in Appendix \ref{sec oldappendixB}, and last equality is due to Lemma \ref{prop mini eigenvalue}(ii). Now,
\begin{align*}
&\|\hat{P}_T-P\|_{\ell_2}=\|I_{n^2}-D_nD_n^{+}(I_n\otimes\hat{\Theta}_T)M_d-(I_{n^2}-D_nD_n^{+}(I_n\otimes\Theta)M_d)\|_{\ell_2}\\
&=\|D_nD_n^{+}(I_n\otimes\hat{\Theta}_T)M_d-D_nD_n^{+}(I_n\otimes\Theta)M_d)\|_{\ell_2}=\|D_nD_n^{+}(I_n\otimes(\hat{\Theta}_T-\Theta))M_d\|_{\ell_2}\\
&=O_p(\sqrt{n/T}),
\end{align*}
where the last equality is due to Theorem \ref{thm main rate of convergence}(i).
\end{proof}

\bigskip

We are now ready to give a poof for Theorem \ref{thm asymptotic normality MD when D is unknown}.

\begin{proof}[Proof of Theorem \ref{thm asymptotic normality MD when D is unknown}]
We write
\begin{align*}
&\frac{\sqrt{T}c^{\intercal}(\hat{\theta}_{T}-\theta^0)}{\sqrt{c^{\intercal}\hat{J}_{T}c}}\\
&=\frac{\sqrt{T}c^{\intercal}(E^{\intercal}WE)^{-1}E^{\intercal}WD_n^+H\ve (\hat{\Theta}_T-\Theta)}{\sqrt{c^{\intercal}\hat{J}_{T}c}}+\frac{\sqrt{T}c^{\intercal}(E^{\intercal}WE)^{-1}E^{\intercal}WD_n^+\ve O_p(\|\hat{\Theta}_{T}-\Theta\|_{\ell_2}^2)}{\sqrt{c^{\intercal}\hat{J}_{T}c}}\\
&=\frac{\sqrt{T}c^{\intercal}(E^{\intercal}WE)^{-1}E^{\intercal}WD_n^+H\left. \frac{\partial \ve \Theta}{\partial \ve \Sigma}\right| _{\Sigma=\mathring{\Sigma}_T^{(i)}}\ve (\hat{\Sigma}_T-\Sigma)}{\sqrt{c^{\intercal}\hat{J}_{T}c}}\\
&\qquad+\frac{\sqrt{T}c^{\intercal}(E^{\intercal}WE)^{-1}E^{\intercal}WD_n^+\ve O_p(\|\hat{\Theta}_{T}-\Theta\|_{\ell_2}^2)}{\sqrt{c^{\intercal}\hat{J}_{T}c}}\\
& =:\hat{t}_{1}+\hat{t}_{2},
\end{align*}
where $\left. \frac{\partial \ve \Theta}{\partial \ve \Sigma}\right| _{\Sigma=\mathring{\Sigma}_T^{(i)}}$ denotes a matrix whose $j$th row is the $j$th row of the Jacobian matrix $\frac{\partial \ve \Theta}{\partial \ve \Sigma}$ evaluated at $\ve \mathring{\Sigma}^{(j)}_T$, which is a point between $\ve\Sigma$ and $\ve\hat{\Sigma}_T$, for $j=1,\ldots,n^2$.

Define
\[t_{1}:=\frac{\sqrt{T}c^{\intercal}(E^{\intercal}WE)^{-1}E^{\intercal}WD_n^+HP(D^{-1/2}\otimes D^{-1/2})\ve (\tilde{\Sigma}_T-\Sigma)}{\sqrt{c^{\intercal}Jc}}.\]
To prove Theorem \ref{thm asymptotic normality MD when D is unknown}, it suffices to show $t_{1}\xrightarrow{d}N(0,1)$, $t_{1}-\hat{t}_{1}=o_p(1)$, and $\hat{t}_{2}=o_p(1)$. The proof is similar to that of Theorem \ref{thm asymptotic normality}, so we will be concise for the parts which are almost identical to those of Theorem \ref{thm asymptotic normality}.

\subsubsection{$t_{1}\xrightarrow{d}N(0,1)$}

We now prove that $t_{1}$ is asymptotically distributed as a standard normal.
\begin{align*}
& t_{1}=\\
&\frac{\sqrt{T}c^{\intercal}(E^{\intercal}WE)^{-1}E^{\intercal}WD_n^+HP(D^{-1/2}\otimes D^{-1/2})\ve \del[2] {\frac{1}{T}\sum_{t=1}^{T}\sbr[1]{ (y_t-\mu)(y_t-\mu)^{\intercal}-\mathbb{E}(y_t-\mu)(y_t-\mu)^{\intercal}}}}{\sqrt{c^{\intercal}Jc}}\\
&=\sum_{t=1}^{T}\frac{T^{-1/2}c^{\intercal}(E^{\intercal}WE)^{-1}E^{\intercal}WD_n^+HP(D^{-1/2}\otimes D^{-1/2})\ve \sbr[1]{ (y_t-\mu)(y_t-\mu)^{\intercal}-\mathbb{E}(y_t-\mu)(y_t-\mu)^{\intercal}}}{\sqrt{c^{\intercal}Jc}}\\
&=:\sum_{t=1}^{T}U_{T,n,t}.
\end{align*}
%
%
Again it is straightforward to show that $\{U_{T,n,t},\mathcal{F}_{T,n,t}\}$ is a martingale difference sequence. We first investigate at what rate the denominator $\sqrt{c^{\intercal}Jc}$ goes to zero:
\begin{align*}
& c^{\intercal}Jc=c^{\intercal}(E^{\intercal}WE)^{-1}E^{\intercal}WD_{n}^{+}HP(D^{-1/2}\otimes D^{-1/2})V(D^{-1/2}\otimes D^{-1/2})P^{\intercal}HD_{n}^{+^{\intercal}}WE(E^{\intercal}WE)^{-1}c\\
&\geq \text{mineval}\del [1]{E^{\intercal}WD_{n}^{+}HP(D^{-1/2}\otimes D^{-1/2})V(D^{-1/2}\otimes D^{-1/2})P^{\intercal}HD_{n}^{+^{\intercal}}WE}\|(E^{\intercal}WE)^{-1}c\|_2^2\\
&\geq \frac{n}{\varpi}\text{mineval}^2(W)c(E^{\intercal}WE)^{-2}c\geq \frac{n}{\varpi}\text{mineval}^2(W)\text{mineval}\del [1]{(E^{\intercal}WE)^{-2}}\\
&=\frac{n\cdot \text{mineval}^2(W)}{\varpi\text{maxeval}^2(E^{\intercal}WE)}\geq \frac{n}{\varpi \text{maxeval}^2(W^{-1})\text{maxeval}^2(W)\text{maxeval}^2(E^{\intercal}E)}\\
&=\frac{n}{\varpi \kappa^2(W)\text{maxeval}^2(E^{\intercal}E)}
\end{align*}
where the second inequality is due to Assumption \ref{assu when D is unknown}(ii). Using (\ref{eqn maxeval EE}), we have
\begin{equation}
\label{eqn inverse square root G rate when D is unknown}
\frac{1}{\sqrt{c^{\intercal}Jc}}=O(\sqrt{s^2\cdot n\cdot \kappa^2(W)\cdot \varpi}).
\end{equation}
%
Verification of conditions (i)-(iii) of Theorem \ref{thm mcleish clt} in Appendix \ref{sec oldappendixB} will be exactly the same as those in Section \ref{sec asymptotic normality of t1 prime}, so we omit the details in the interest of space.

\subsubsection{$t_{1}-\hat{t}_{1}=o_p(1)$}

We now show that $t_{1}-\hat{t}_{1}=o_p(1)$. Let $A$ and $\hat{A}$ denote the numerators of $t_{1}$ and $\hat{t}_{1}$, respectively.
\begin{align*}
t_{1}-\hat{t}_{1} &=\frac{A}{\sqrt{c^{\intercal}Jc}}-\frac{\hat{A}}{\sqrt{c^{\intercal}\hat{J}_{T}c}}=\frac{\sqrt{s^2n\kappa^2(W)\varpi}A}{\sqrt{s^2n\kappa^2(W)\varpi c^{\intercal}Jc}}-\frac{\sqrt{s^2n\kappa^2(W)\varpi}\hat{A}}{\sqrt{s^2n\kappa^2(W)\varpi c^{\intercal}\hat{J}_{T}c}}.
\end{align*}
Since we have already shown in (\ref{eqn inverse square root G rate when D is unknown}) that  $s^2n\kappa^2(W)\varpi c^{\intercal}Jc$ is bounded away from zero by an absolute constant, it suffices to show the denominators as well as numerators of $t_{1}$ and $\hat{t}_{1}$ are asymptotically equivalent.

\subsubsection{Denominators of $t_{1}$ and $\hat{t}_{1}$}

We first show that the denominators of $t_{1}$ and $\hat{t}_{1}$ are asymptotically equivalent, i.e.,
\[s^2n\kappa^2(W)\varpi|c^{\intercal}\hat{J}_{T}c-c^{\intercal}Jc|=o_p(1).\]
 Define
\[c^{\intercal}\tilde{J}_Tc=c^{\intercal}(E^{\intercal}WE)^{-1}E^{\intercal}WD_{n}^{+}\hat
{H}_{T}\hat{P}_T(\hat{D}_T^{-1/2}\otimes \hat{D}_T^{-1/2})V(\hat{D}_T^{-1/2}\otimes \hat{D}_T^{-1/2})\hat{P}_T^{\intercal}\hat{H}_{T}D_{n}^{+^{\intercal}}WE(E^{\intercal
}WE)^{-1}c.\]
By the triangular inequality: $s^2n\kappa^2(W)\varpi|c^{\intercal}\hat{J}_{T}c-c^{\intercal}Jc|\leq s^2n\kappa^2(W)\varpi|c^{\intercal}\hat{J}_{T}c-c^{\intercal}\tilde{J}_Tc|+s^2n\kappa^2(W)\varpi|c^{\intercal}\tilde{J}_Tc-c^{\intercal}Jc|$. First, we prove $s^2n\kappa^2(W)\varpi|c^{\intercal}\hat{J}_{T}c-c^{\intercal}\tilde{J}_Tc|=o_p(1)$.
\begin{align*}
& s^2n\kappa^2(W)\varpi|c^{\intercal}\hat{J}_{T}c-c^{\intercal}\tilde{J}_Tc|\\
&=s^2n\kappa^2(W)\varpi|c^{\intercal}(E^{\intercal}WE)^{-1}E^{\intercal}WD_{n}^{+}\hat
{H}_{T}\hat{P}_T(\hat{D}_T^{-1/2}\otimes \hat{D}_T^{-1/2})\hat{V}_T(\hat{D}_T^{-1/2}\otimes \hat{D}_T^{-1/2})\hat{P}_T^{\intercal}\hat{H}_{T}D_{n}^{+^{\intercal}}WE(E^{\intercal
}WE)^{-1}c\\
&\qquad -c^{\intercal}(E^{\intercal}WE)^{-1}E^{\intercal}WD_{n}^{+}\hat
{H}_{T}\hat{P}_T(\hat{D}_T^{-1/2}\otimes \hat{D}_T^{-1/2})V(\hat{D}_T^{-1/2}\otimes \hat{D}_T^{-1/2})\hat{P}_T^{\intercal}\hat{H}_{T}D_{n}^{+^{\intercal}}WE(E^{\intercal
}WE)^{-1}c|\\
&=s^2n\kappa^2(W)\varpi\\
&\quad \cdot |c^{\intercal}(E^{\intercal}WE)^{-1}E^{\intercal}WD_{n}^{+}\hat
{H}_{T}\hat{P}_T(\hat{D}_T^{-1/2}\otimes \hat{D}_T^{-1/2})(\hat{V}_T-V)(\hat{D}_T^{-1/2}\otimes \hat{D}_T^{-1/2})\hat{P}_T^{\intercal}\hat{H}_{T}D_{n}^{+^{\intercal}}WE(E^{\intercal
}WE)^{-1}c|\\
&\leq s^2n\kappa^2(W)\varpi \|\hat{V}_T-V\|_{\infty}\|(\hat{D}_T^{-1/2}\otimes \hat{D}_T^{-1/2})\hat{P}_T^{\intercal}\hat{H}_{T}D_{n}^{+^{\intercal}}WE(E^{\intercal
}WE)^{-1}c\|_1^2\\
&\leq s^2n^3\kappa^2(W)\varpi \|\hat{V}_T-V\|_{\infty}\|(\hat{D}_T^{-1/2}\otimes \hat{D}_T^{-1/2})\hat{P}_T^{\intercal}\hat{H}_{T}D_{n}^{+^{\intercal}}WE(E^{\intercal
}WE)^{-1}c\|_2^2\\
&\leq s^2n^3\kappa^2(W)\varpi \|\hat{V}_T-V\|_{\infty}\|(\hat{D}_T^{-1/2}\otimes \hat{D}_T^{-1/2})\|_{\ell_2}^2\|\hat{P}_T^{\intercal}\|_{\ell_2}^2\|\hat{H}_{T}\|_{\ell_2}^2\|D_{n}^{+^{\intercal}}\|_{\ell_2}^2\|WE(E^{\intercal
}WE)^{-1}\|_{\ell_2}^2\\
&=O_p(s^2n^2\kappa^3(W)\varpi^2)\|\hat{V}_T-V\|_{\infty}=O_p\del [3]{\sqrt{\frac{n^4\kappa^6(W)s^4\varpi^4\log n}{T}}}=o_p(1),
\end{align*}
where $\|\cdot\|_{\infty}$ denotes the absolute elementwise maximum, the third equality is due to Lemma \ref{lemmaD}(v), Lemma \ref{lemma l2norm of kronecker product} in Appendix \ref{sec oldappendixB}, (\ref{eqn spectral norm of H and Hhat}), (\ref{align Eplus spectral norm}), (\ref{eqn spectral norm for Dnplus and Dn}) and (\ref{eqn spectral norm of P and Phat}), the second last equality is due to Lemma \ref{lemma rate for hatV-V infty} in SM \ref{sec A5}, and the last equality is due to Assumption \ref{assu n indexed by T}(ii).

We now prove $s^2n\kappa^2(W)\varpi|c^{\intercal}\tilde{J}_Tc-c^{\intercal}Jc|=o_p(1)$. Define
\begin{align*}
c^{\intercal}\tilde{J}_{T,a}c&:=c^{\intercal}(E^{\intercal}WE)^{-1}E^{\intercal}WD_{n}^{+}\hat
{H}_{T}\hat{P}_T(D^{-1/2}\otimes D^{-1/2})V(D^{-1/2}\otimes D^{-1/2})\hat{P}_T^{\intercal}\hat{H}_{T}D_{n}^{+^{\intercal}}WE(E^{\intercal
}WE)^{-1}c\\
c^{\intercal}\tilde{J}_{T,b}c&:=c^{\intercal}(E^{\intercal}WE)^{-1}E^{\intercal}WD_{n}^{+}\hat
{H}_{T}P(D^{-1/2}\otimes D^{-1/2})V(D^{-1/2}\otimes D^{-1/2})P^{\intercal}\hat{H}_{T}D_{n}^{+^{\intercal}}WE(E^{\intercal
}WE)^{-1}c.
\end{align*}
We use triangular inequality again
\begin{align}
& s^2n\kappa^2(W)\varpi|c^{\intercal}\tilde{J}_Tc-c^{\intercal}Jc|\leq\notag \\ 
& \quad s^2n\kappa^2(W)\varpi|c^{\intercal}\tilde{J}_Tc-c^{\intercal}\tilde{J}_{T,a}c|+s^2n\kappa^2(W)\varpi|c^{\intercal}\tilde{J}_{T,a}c-c^{\intercal}\tilde{J}_{T,b}c|+s^2n\kappa^2(W)\varpi|c^{\intercal}\tilde{J}_{T,b}c-c^{\intercal}Jc|. \label{eqn D unknown san jian ke}
\end{align}
We consider the first term on the right side of (\ref{eqn D unknown san jian ke}).
\begin{align}
& s^2n\kappa^2(W)\varpi|c^{\intercal}\tilde{J}_Tc-c^{\intercal}\tilde{J}_{T,a}c|= \notag\\
& s^2n\kappa^2(W)\varpi|c^{\intercal}(E^{\intercal}WE)^{-1}E^{\intercal}WD_{n}^{+}\hat
{H}_{T}\hat{P}_T(\hat{D}_T^{-1/2}\otimes \hat{D}_T^{-1/2})V(\hat{D}_T^{-1/2}\otimes \hat{D}_T^{-1/2})\hat{P}_T^{\intercal}\hat{H}_{T}D_{n}^{+^{\intercal}}WE(E^{\intercal
}WE)^{-1}c\notag \\
&\qquad -c^{\intercal}(E^{\intercal}WE)^{-1}E^{\intercal}WD_{n}^{+}\hat
{H}_{T}\hat{P}_T(D^{-1/2}\otimes D^{-1/2})V(D^{-1/2}\otimes D^{-1/2})\hat{P}_T^{\intercal}\hat{H}_{T}D_{n}^{+^{\intercal}}WE(E^{\intercal
}WE)^{-1}c| \notag\\
&\leq s^2n\kappa^2(W)\varpi \envert[1]{\text{maxeval}(V)}\|(\hat{D}_T^{-1/2}\otimes \hat{D}_T^{-1/2}-D^{-1/2}\otimes D^{-1/2})\hat{P}_T^{\intercal}\hat{H}_{T}D_{n}^{+^{\intercal}}WE(E^{\intercal}WE)^{-1}c\|_2^2\notag \\
&\quad+s^2n\kappa^2(W)\varpi\|V(D^{-1/2} \otimes D^{-1/2} )\hat{P}_T^{\intercal}\hat{H}_{T}D_{n}^{+^{\intercal}}WE(E^{\intercal}WE)^{-1}c\|_2\notag\\
&\qquad \cdot\|(\hat{D}_T^{-1/2}\otimes \hat{D}_T^{-1/2}-D^{-1/2}\otimes D^{-1/2})\hat{P}_T^{\intercal}\hat{H}_{T}D_{n}^{+^{\intercal}}WE(E^{\intercal}WE)^{-1}c\|_2 \label{align tildeG-G D unknown I}
\end{align}
where the inequality is due to Lemma \ref{lemma vandegeer} in Appendix \ref{sec oldappendixB}. We consider the first term of (\ref{align tildeG-G D unknown I}) first.
\begin{align*}
& s^2n\kappa^2(W)\varpi\envert[1]{\text{maxeval}(V)}\|(\hat{D}_T^{-1/2}\otimes \hat{D}_T^{-1/2}-D^{-1/2}\otimes D^{-1/2})\hat{P}_T^{\intercal}\hat{H}_{T}D_{n}^{+^{\intercal}}WE(E^{\intercal}WE)^{-1}c\|_2^2\\
& =O(s^2n\kappa^2(W)\varpi)\|\hat{D}_T^{-1/2}\otimes \hat{D}_T^{-1/2}-D^{-1/2}\otimes D^{-1/2}\|_{\ell_2}^2\|\hat{P}_T^{\intercal}\|_{\ell_2}^2\|\hat{H}_T\|_{\ell_2}^2\|D_{n}^{+^{\intercal}}\|_{\ell_2}^2\|WE(E^{\intercal}WE)^{-1}\|^2_{\ell_2}\\
&= O_p(s^2n\kappa^3(W)\varpi^2/T)=o_p(1),
\end{align*}
where the second last equality is due to (\ref{eqn spectral norm of H and Hhat}), (\ref{eqn spectral norm for Dnplus and Dn}), (\ref{align Eplus spectral norm}), (\ref{eqn spectral norm of P and Phat}) and Lemma \ref{lemmaD}(vii), and the last equality is due to Assumption \ref{assu n indexed by T}(ii).

We now consider the second term of (\ref{align tildeG-G D unknown I}).
\begin{align*}
& 2 s^2n\kappa^2(W)\varpi\|V(D^{-1/2} \otimes D^{-1/2} )\hat{P}_T^{\intercal}\hat{H}_{T}D_{n}^{+^{\intercal}}WE(E^{\intercal}WE)^{-1}c\|_2\notag\\
&\qquad \cdot\|(\hat{D}_T^{-1/2}\otimes \hat{D}_T^{-1/2}-D^{-1/2}\otimes D^{-1/2})\hat{P}_T^{\intercal}\hat{H}_{T}D_{n}^{+^{\intercal}}WE(E^{\intercal}WE)^{-1}c\|_2\\
&\leq O(s^2n\kappa^2(W)\varpi)\|\hat{D}_T^{-1/2}\otimes \hat{D}_T^{-1/2}-D^{-1/2}\otimes D^{-1/2}\|_{\ell_2}\|\hat{P}_T^{\intercal}\|_{\ell_2}^2\|\hat{H}_T\|_{\ell_2}^2\|D_{n}^{+^{\intercal}}\|_{\ell_2}^2\|WE(E^{\intercal}WE)^{-1}\|^2_{\ell_2}\\
&=O(\sqrt{s^4n\kappa^6(W)\varpi^4/T})=o_p(1),
\end{align*}
where the first equality is due to (\ref{eqn spectral norm of H and Hhat}), (\ref{eqn spectral norm for Dnplus and Dn}), (\ref{align Eplus spectral norm}), (\ref{eqn spectral norm of P and Phat}) and Lemma \ref{lemmaD}(vii), and the last equality is due to Assumption \ref{assu n indexed by T}(ii). We have proved $s^2n\kappa^2(W)\varpi|c^{\intercal}\tilde{J}_Tc-c^{\intercal}\tilde{J}_{T,a}c|=o_p(1)$.

We consider the second term on the right hand side of (\ref{eqn D unknown san jian ke}).
\begin{align}
& s^2n\kappa^2(W)\varpi|c^{\intercal}\tilde{J}_{T,a}c-c^{\intercal}\tilde{J}_{T,b}c| =\notag\\
& s^2n\kappa^2(W)\varpi|c^{\intercal}(E^{\intercal}WE)^{-1}E^{\intercal}WD_{n}^{+}\hat
{H}_{T}\hat{P}_T(D^{-1/2}\otimes D^{-1/2})V(D^{-1/2}\otimes D^{-1/2})\hat{P}_T^{\intercal}\hat{H}_{T}D_{n}^{+^{\intercal}}WE(E^{\intercal
}WE)^{-1}c\notag \\
&\qquad -c^{\intercal}(E^{\intercal}WE)^{-1}E^{\intercal}WD_{n}^{+}\hat
{H}_{T}P(D^{-1/2}\otimes D^{-1/2})V(D^{-1/2}\otimes D^{-1/2})P^{\intercal}\hat{H}_{T}D_{n}^{+^{\intercal}}WE(E^{\intercal
}WE)^{-1}c| \notag\\
&\leq s^2n\kappa^2(W)\varpi \envert[1]{\text{maxeval}[(D^{-1/2}\otimes D^{-1/2})V(D^{-1/2}\otimes D^{-1/2})]}\|(\hat{P}_T-P)^{\intercal}\hat{H}_{T}D_{n}^{+^{\intercal}}WE(E^{\intercal}WE)^{-1}c\|_2^2\notag \\
&\quad+2 s^2n\kappa^2(W)\varpi\|(D^{-1/2} \otimes D^{-1/2} )V(D^{-1/2} \otimes D^{-1/2} )P^{\intercal}\hat{H}_{T}D_{n}^{+^{\intercal}}WE(E^{\intercal}WE)^{-1}c\|_2\notag\\
&\qquad \cdot\|(\hat{P}_T-P)^{\intercal}\hat{H}_{T}D_{n}^{+^{\intercal}}WE(E^{\intercal}WE)^{-1}c\|_2 \label{align tildeG-G D unknown II}
\end{align}
where the inequality is due to Lemma \ref{lemma vandegeer} in Appendix \ref{sec oldappendixB}. We consider the first term of (\ref{align tildeG-G D unknown II}) first.
\begin{align*}
& s^2n\kappa^2(W)\varpi \envert[1]{\text{maxeval}[(D^{-1/2}\otimes D^{-1/2})V(D^{-1/2}\otimes D^{-1/2})]}\|(\hat{P}_T-P)^{\intercal}\hat{H}_{T}D_{n}^{+^{\intercal}}WE(E^{\intercal}WE)^{-1}c\|_2^2\\
& =O(s^2n\kappa^2(W)\varpi)\|\hat{P}_T^{\intercal}-P^{\intercal}\|_{\ell_2}^2\|\hat{H}_T\|_{\ell_2}^2\|D_{n}^{+^{\intercal}}\|_{\ell_2}^2\|WE(E^{\intercal}WE)^{-1}\|^2_{\ell_2}\\
&= O_p(s^2n\kappa^3(W)\varpi^2/T)=o_p(1),
\end{align*}
where the second last equality is due to (\ref{eqn spectral norm of H and Hhat}), (\ref{eqn spectral norm for Dnplus and Dn}), (\ref{align Eplus spectral norm}), and (\ref{eqn spectral norm of P and Phat}), and the last equality is due to Assumption \ref{assu n indexed by T}(ii).

We now consider the second term of (\ref{align tildeG-G D unknown II}).
\begin{align*}
& 2 s^2n\kappa^2(W)\varpi\|(D^{-1/2} \otimes D^{-1/2} )V(D^{-1/2} \otimes D^{-1/2} )P^{\intercal}\hat{H}_{T}D_{n}^{+^{\intercal}}WE(E^{\intercal}WE)^{-1}c\|_2\notag\\
&\qquad \cdot\|(\hat{P}_T-P)^{\intercal}\hat{H}_{T}D_{n}^{+^{\intercal}}WE(E^{\intercal}WE)^{-1}c\|_2\\
&\leq O(s^2n\kappa^2(W)\varpi)\|\hat{P}_T^{\intercal}-P^{\intercal}\|_{\ell_2}^2\|\hat{H}_T\|_{\ell_2}^2\|D_{n}^{+^{\intercal}}\|_{\ell_2}^2\|WE(E^{\intercal}WE)^{-1}\|^2_{\ell_2}\\
&=O(\sqrt{s^4n\kappa^6(W)\varpi^4/T})=o_p(1),
\end{align*}
where the first equality is due to (\ref{eqn spectral norm of H and Hhat}), (\ref{eqn spectral norm for Dnplus and Dn}), (\ref{align Eplus spectral norm}), and (\ref{eqn spectral norm of P and Phat}), and the last equality is due to Assumption \ref{assu n indexed by T}(ii). We have proved $s^2n\kappa^2(W)\varpi|c^{\intercal}\tilde{J}_{T,a}c-c^{\intercal}\tilde{J}_{T,b}c|=o_p(1)$.

We consider the third term on the right hand side of (\ref{eqn D unknown san jian ke}).
\begin{align}
& s^2n\kappa^2(W)\varpi|c^{\intercal}\tilde{J}_{T,b}c-c^{\intercal}Jc| =\notag\\
& s^2n\kappa^2(W)\varpi|c^{\intercal}(E^{\intercal}WE)^{-1}E^{\intercal}WD_{n}^{+}\hat
{H}_{T}P(D^{-1/2}\otimes D^{-1/2})V(D^{-1/2}\otimes D^{-1/2})P^{\intercal}\hat{H}_{T}D_{n}^{+^{\intercal}}WE(E^{\intercal
}WE)^{-1}c\notag \\
&\qquad -c^{\intercal}(E^{\intercal}WE)^{-1}E^{\intercal}WD_{n}^{+}HTP(D^{-1/2}\otimes D^{-1/2})V(D^{-1/2}\otimes D^{-1/2})P^{\intercal}HD_{n}^{+^{\intercal}}WE(E^{\intercal
}WE)^{-1}c| \notag\\
&\leq s^2n\kappa^2(W)\varpi \envert[1]{\text{maxeval}[P(D^{-1/2}\otimes D^{-1/2})V(D^{-1/2}\otimes D^{-1/2})P^{\intercal}]}\|(\hat{H}_{T}-H)D_{n}^{+^{\intercal}}WE(E^{\intercal}WE)^{-1}c\|_2^2\notag \\
&\quad+2 s^2n\kappa^2(W)\varpi\|P(D^{-1/2} \otimes D^{-1/2} )V(D^{-1/2} \otimes D^{-1/2} )P^{\intercal}HD_{n}^{+^{\intercal}}WE(E^{\intercal}WE)^{-1}c\|_2\notag\\
&\qquad \cdot\|(\hat{H}_{T}-H)D_{n}^{+^{\intercal}}WE(E^{\intercal}WE)^{-1}c\|_2 \label{align tildeG-G D unknown III}
\end{align}
where the inequality is due to Lemma \ref{lemma vandegeer} in Appendix \ref{sec oldappendixB}. We consider the first term of (\ref{align tildeG-G D unknown III}) first.
\begin{align*}
& s^2n\kappa^2(W)\varpi \envert[1]{\text{maxeval}[P(D^{-1/2}\otimes D^{-1/2})V(D^{-1/2}\otimes D^{-1/2})P^{\intercal}]}\|(\hat{H}_{T}-H)D_{n}^{+^{\intercal}}WE(E^{\intercal}WE)^{-1}c\|_2^2\\
& =O(s^2n\kappa^2(W)\varpi)\|\hat{H}_T-H\|_{\ell_2}^2\|D_{n}^{+^{\intercal}}\|_{\ell_2}^2\|WE(E^{\intercal}WE)^{-1}\|^2_{\ell_2}\\
&= O_p(s^2n\kappa^3(W)\varpi^2/T)=o_p(1),
\end{align*}
where the second last equality is due to (\ref{eqn spectral norm of H and Hhat}), (\ref{eqn spectral norm for Dnplus and Dn}), and (\ref{align Eplus spectral norm}), and the last equality is due to Assumption \ref{assu n indexed by T}(ii).

We now consider the second term of (\ref{align tildeG-G D unknown III}).
\begin{align*}
& 2 s^2n\kappa^2(W)\varpi\|P(D^{-1/2} \otimes D^{-1/2} )V(D^{-1/2} \otimes D^{-1/2} )P^{\intercal}HD_{n}^{+^{\intercal}}WE(E^{\intercal}WE)^{-1}c\|_2\notag\\
&\qquad \cdot\|(\hat{H}_{T}-H)D_{n}^{+^{\intercal}}WE(E^{\intercal}WE)^{-1}c\|_2\\
&\leq O(s^2n\kappa^2(W)\varpi)\|\hat{H}_T-H\|_{\ell_2}^2\|D_{n}^{+^{\intercal}}\|_{\ell_2}^2\|WE(E^{\intercal}WE)^{-1}\|^2_{\ell_2}=O(\sqrt{s^4n\kappa^6(W)\varpi^4/T})=o_p(1),
\end{align*}
where the first equality is due to (\ref{eqn spectral norm of H and Hhat}), (\ref{eqn spectral norm for Dnplus and Dn}), and (\ref{align Eplus spectral norm}), and the last equality is due to Assumption \ref{assu n indexed by T}(ii). We have proved $s^2n\kappa^2(W)\varpi|c^{\intercal}\tilde{J}_{T,b}c-c^{\intercal}Jc|=o_p(1)$. Hence we have proved $s^2n\kappa^2(W)\varpi|c^{\intercal}\tilde{J}_Tc-c^{\intercal}Jc|=o_p(1)$.

\subsubsection{Numerators of $t_{1}$ and $\hat{t}_{1}$}

We now show that numerators of $t_{1}$ and $\hat{t}_{1}$ are asymptotically equivalent, i.e.,
\[\sqrt{s^2n\kappa^2(W)\varpi}|A-\hat{A}|=o_p(1).\]
Note that
\begin{align*}
 \hat{A}&=\sqrt{T}c^{\intercal}(E^{\intercal}WE)^{-1}E^{\intercal}WD_n^+H\left. \frac{\partial \ve \Theta}{\partial \ve \Sigma}\right| _{\Sigma=\mathring{\Sigma}_T^{(i)}}\ve (\hat{\Sigma}_T-\Sigma)\\
&=\sqrt{T}c^{\intercal}(E^{\intercal}WE)^{-1}E^{\intercal}WD_n^+H\left. \frac{\partial \ve \Theta}{\partial \ve \Sigma}\right| _{\Sigma=\mathring{\Sigma}_T^{(i)}}\ve (\hat{\Sigma}_T-\tilde{\Sigma}_T)\\
&\qquad +\sqrt{T}c^{\intercal}(E^{\intercal}WE)^{-1}E^{\intercal}WD_n^+H\left. \frac{\partial \ve \Theta}{\partial \ve \Sigma}\right| _{\Sigma=\mathring{\Sigma}_T^{(i)}}\ve (\tilde{\Sigma}_T-\Sigma)\\
&=:\hat{A}_a+\hat{A}_b.
\end{align*}
To show $\sqrt{s^2n\kappa^2(W)\varpi}|A-\hat{A}|=o_p(1)$, it suffices to show $\sqrt{s^2n\kappa^2(W)\varpi}|\hat{A}_b-A|=o_p(1)$ and $\sqrt{s^2n\kappa^2(W)\varpi}|\hat{A}_a|=o_p(1)$. We first show that $\sqrt{s^2n\kappa^2(W)\varpi}|\hat{A}_b-A|=o_p(1)$.
\begin{align*}
&\sqrt{s^2n\kappa^2(W)\varpi}|\hat{A}_b-A|\\
&=\sqrt{s^2n\kappa^2(W)\varpi}\envert[3]{\sqrt{T}c^{\intercal}(E^{\intercal}WE)^{-1}E^{\intercal}WD_n^+H\sbr[3]{\left. \frac{\partial \ve \Theta}{\partial \ve \Sigma}\right| _{\Sigma=\mathring{\Sigma}_T^{(i)}}-P(D^{-1/2}\otimes D^{-1/2})}\ve (\tilde{\Sigma}_T-\Sigma)}\\
&\leq \sqrt{Ts^2n\kappa^2(W)\varpi}\|(E^{\intercal}WE)^{-1}E^{\intercal}W\|_{\ell_2}\|D_n^+\|_{\ell_2}\|H\|_{\ell_2}\\
&\qquad \cdot \enVert[3]{\left. \frac{\partial \ve \Theta}{\partial \ve \Sigma}\right| _{\Sigma=\mathring{\Sigma}_T^{(i)}}-P(D^{-1/2}\otimes D^{-1/2})}_{\ell_2}\|\ve (\tilde{\Sigma}_T-\Sigma)\|_{2}\\
& =O(\sqrt{Ts^2n\kappa^2(W)\varpi})\sqrt{\varpi \kappa(W)/n}O_p\del [3]{\sqrt{\frac{n}{T}}}\|\tilde{\Sigma}_T-\Sigma\|_{F}\leq O(\sqrt{n s^2\kappa^3(W)\varpi^2})\sqrt{n}\|\tilde{\Sigma}_T-\Sigma\|_{\ell_2}\\
&=O(\sqrt{n s^2\kappa^3(W)\varpi^2})\sqrt{n}O_p\del [3]{\sqrt{\frac{n}{T}}}=O_p\del [3]{\sqrt{\frac{n^3s^2\kappa^3(W)\varpi^2}{T}}}=o_p(1),
\end{align*}
where the second equality is due to Assumption \ref{assu when D is unknown}(i), the third equality is due to Lemma \ref{lemma spectral norm perturbation covariance matrix}, and final equality is due to Assumption \ref{assu n indexed by T}(ii).

We now show that $\sqrt{s^2n\kappa^2(W)\varpi}|\hat{A}_a|=o_p(1)$.
\begin{align*}
&\sqrt{s^2n\kappa^2(W)\varpi T}\envert[3]{c^{\intercal}(E^{\intercal}WE)^{-1}E^{\intercal}WD_n^+H\left. \frac{\partial \ve \Theta}{\partial \ve \Sigma}\right| _{\Sigma=\mathring{\Sigma}_T^{(i)}}\ve (\hat{\Sigma}_T-\tilde{\Sigma}_T)}\\
&=\sqrt{s^2n\kappa^2(W)\varpi T}\envert[3]{c^{\intercal}(E^{\intercal}WE)^{-1}E^{\intercal}WD_n^+H\left. \frac{\partial \ve \Theta}{\partial \ve \Sigma}\right| _{\Sigma=\mathring{\Sigma}_T^{(i)}}\ve \sbr[1]{(\bar{y}-\mu)(\bar{y}-\mu)^{\intercal}}}\\
&\leq \sqrt{s^2n\kappa^2(W)\varpi T}\|(E^{\intercal}WE)^{-1}E^{\intercal}W\|_{\ell_2}\|D_n^+\|_{\ell_2}\|H\|_{\ell_2}\enVert[3]{\left. \frac{\partial \ve \Theta}{\partial \ve \Sigma}\right| _{\Sigma=\mathring{\Sigma}_T^{(i)}}}_{\ell_2}\|\ve \sbr[1]{(\bar{y}-\mu)(\bar{y}-\mu)^{\intercal}}\|_{2}\\
&=O(\sqrt{Ts^2n\kappa^2(W)\varpi})\sqrt{\varpi \kappa(W)/n}\|(\bar{y}-\mu)(\bar{y}-\mu)^{\intercal}\|_F\\
&\leq O(\sqrt{Ts^2n\kappa^2(W)\varpi})\sqrt{\varpi \kappa(W)/n}n\|(\bar{y}-\mu)(\bar{y}-\mu)^{\intercal}\|_{\infty}\\
&=O(\sqrt{Ts^2n^2\kappa^3(W)\varpi^2})\max_{1\leq i,j\leq n}\envert[1]{(\bar{y}-\mu)_i(\bar{y}-\mu)_j}=O_p(\sqrt{Ts^2n^2\kappa^3(W)\varpi^2})\log n/T\\
&=O_p\del[3]{\sqrt{\frac{\log^4n\cdot n^2\kappa^3(W)\varpi^2}{T}}}=o_p(1),
\end{align*}
where the third last equality is due to (\ref{eqn rate for hatV-V infinity part D}), the last equality is due to Assumption \ref{assu n indexed by T}(ii), and the second equality is due to (\ref{eqn spectral norm of H and Hhat}), (\ref{eqn spectral norm for Dnplus and Dn}), (\ref{align Eplus spectral norm}), and the fact that
\begin{align*}
\enVert[3]{\left. \frac{\partial \ve \Theta}{\partial \ve \Sigma}\right| _{\Sigma=\mathring{\Sigma}_T^{(i)}}}_{\ell_2}&=\enVert[3]{\left. \frac{\partial \ve \Theta}{\partial \ve \Sigma}\right| _{\Sigma=\mathring{\Sigma}_T^{(i)}}-P(D^{-1/2}\otimes D^{-1/2})}_{\ell_2}+\enVert[3]{P(D^{-1/2}\otimes D^{-1/2})}_{\ell_2}\\
&=O_p\del[3]{\sqrt{\frac{n}{T}}}+O(1)=O_p(1).
\end{align*}

\subsubsection{$\hat{t}_{2}=o_p(1)$}

Write
\[\hat{t}_{2}=\frac{\sqrt{T}\sqrt{s^2n\kappa^2(W)\varpi}c^{\intercal}(E^{\intercal}WE)^{-1}E^{\intercal}WD_n^+\ve O_p(\|\hat{\Theta}_{T}-\Theta\|_{\ell_2}^2)}{\sqrt{s^2n\kappa^2(W)\varpi c^{\intercal}\hat{J}_{T}c}}.\]
Since the denominator of the preceding equation is bounded away from zero by an absolute constant with probability approaching one by (\ref{eqn inverse square root G rate when D is unknown}) and that $s^2n\kappa^2(W)\varpi|c^{\intercal}\hat{J}_{T}c-c^{\intercal}Jc|=o_p(1)$, it suffices to show
\[\sqrt{T}\sqrt{s^2n\kappa^2(W)\varpi}c^{\intercal}(E^{\intercal}WE)^{-1}E^{\intercal}WD_n^+\ve O_p(\|\hat{\Theta}_{T}-\Theta\|_{\ell_2}^2)=o_p(1).\]
This is straightforward:
\begin{align*}
&|\sqrt{Ts^2n\kappa^2(W)\varpi}c^{\intercal}(E^{\intercal}WE)^{-1}E^{\intercal}WD^+_n\ve O_p(\|\hat{\Theta}_{T}-\Theta\|_{\ell_2}^2)|\\
&\leq \sqrt{Ts^2n\kappa^2(W)\varpi}\|c^{\intercal}(E^{\intercal}WE)^{-1}E^{\intercal}WD^+_n\|_2\|\ve O_p(\|\hat{\Theta}_{T}-\Theta\|_{\ell_2}^2)\|_2\\
&= O(\sqrt{Ts^2\kappa^3(W)\varpi^2})\|O_p(\|\hat{\Theta}_{T}-\Theta\|_{\ell_2}^2)\|_F= O(\sqrt{Tns^2\kappa^3(W)\varpi^2})\|O_p(\|\hat{\Theta}_{T}-\Theta\|_{\ell_2}^2)\|_{\ell_2}\\
&=O(\sqrt{Tns^2\kappa^3(W)\varpi^2})O_p(\|\hat{\Theta}_{T}-\Theta\|_{\ell_2}^2) =O_p\del[3]{ \sqrt{\frac{n^3s^2\kappa^3(W)\varpi^2}{T}}}=o_p(1),
\end{align*}
where the last equality is due to Assumption \ref{assu n indexed by T}(ii).
\end{proof}

\subsection{Proof of Theorem \ref{prop Haihan score functions and second derivatives}}
\label{sec A7}

In this subsection, we give a proof for Theorem \ref{prop Haihan score functions and second derivatives}. We first give a useful lemma which is used in the proof of Theorem \ref{prop Haihan score functions and second derivatives}.

\begin{lemma}[\cite{magnusneudecker2007} p218]
\label{thmmagnusneudecker2007p218}
Let $\phi$ be a twice differentiable real-valued function of an $n\times q$ matrix $X$. Then the following two relationships hold between the second differential and the Hessian matrix of $\phi$ at $X$:
\[d^2\phi(X)=\text{tr}\sbr[1]{B(dX)^{\intercal}CdX}\quad \Longleftrightarrow \quad \frac{\partial ^2\phi(X)}{\partial (\ve X)\partial (\ve X)^{\intercal}}=\frac{1}{2}(B^{\intercal}\otimes C+B\otimes C^{\intercal})\]
and
\[d^2\phi(X)=\text{tr}\sbr[1]{B(dX)CdX}\quad \Longleftrightarrow \quad \frac{\partial ^2\phi(X)}{\partial (\ve X)\partial (\ve X)^{\intercal}}=\frac{1}{2}K_{qn}(B^{\intercal}\otimes C+C^{\intercal}\otimes B).\]
\end{lemma}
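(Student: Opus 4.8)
The plan is to reduce both equivalences to the \emph{second identification theorem} for Hessian matrices (\cite{magnusneudecker2007}): if $\phi$ is twice differentiable and its second differential can be written as the quadratic form $d^2\phi(X)=(\ve dX)^{\intercal}M(\ve dX)$ for some (possibly $X$-dependent) matrix $M$, then the Hessian is the symmetric part $\partial^2\phi/\partial(\ve X)\partial(\ve X)^{\intercal}=\tfrac12(M+M^{\intercal})$. Thus for each trace representation the task splits into (a) rewriting the trace as a quadratic form in $\ve dX$, reading off $M$, and (b) symmetrising $M$ and massaging it into the stated Kronecker expression. The elementary tools I would use are the vec--trace identity $\tr(A^{\intercal}B)=(\ve A)^{\intercal}\ve B$, the vectorisation rule $\ve(PQR)=(R^{\intercal}\otimes P)\ve Q$, the transpose rule $(P\otimes Q)^{\intercal}=P^{\intercal}\otimes Q^{\intercal}$, and, for the second identity, the commutation matrix with $\ve(A^{\intercal})=K_{nq}\ve A$ for $A$ of size $n\times q$ and $K_{nq}^{\intercal}=K_{qn}$.

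For the first equivalence, $d^2\phi=\tr[B(dX)^{\intercal}C\,dX]$, cyclic invariance of the trace gives $\tr[(dX)^{\intercal}(C\,dX\,B)]$; applying $\tr(A^{\intercal}B)=(\ve A)^{\intercal}\ve B$ with $A=dX$ together with $\ve(C\,dX\,B)=(B^{\intercal}\otimes C)\ve dX$ yields $d^2\phi=(\ve dX)^{\intercal}(B^{\intercal}\otimes C)\ve dX$. Hence $M=B^{\intercal}\otimes C$, and symmetrising via the transpose rule gives $\tfrac12\big[(B^{\intercal}\otimes C)+(B\otimes C^{\intercal})\big]$, which is exactly the claimed Hessian.

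For the second equivalence, $d^2\phi=\tr[B\,dX\,C\,dX]$, I would write the trace of the product of the two $q\times q$ blocks $B\,dX$ and $C\,dX$ as $\tr[(B\,dX)(C\,dX)]=\big(\ve[(B\,dX)^{\intercal}]\big)^{\intercal}\ve(C\,dX)$. Using $\ve(C\,dX)=(I_q\otimes C)\ve dX$ and $\ve[(dX)^{\intercal}B^{\intercal}]=(B\otimes I_q)\ve[(dX)^{\intercal}]=(B\otimes I_q)K_{nq}\,\ve dX$, and collapsing the two Kronecker factors (with $K_{nq}^{\intercal}=K_{qn}$), this becomes $d^2\phi=(\ve dX)^{\intercal}K_{qn}(B^{\intercal}\otimes C)\ve dX$, so $M=K_{qn}(B^{\intercal}\otimes C)$. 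Symmetrising gives $\tfrac12\big[K_{qn}(B^{\intercal}\otimes C)+(B\otimes C^{\intercal})K_{nq}\big]$.

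The main obstacle is the final algebraic step: turning $(B\otimes C^{\intercal})K_{nq}$ into $K_{qn}(C^{\intercal}\otimes B)$, so that the common factor $K_{qn}$ pulls out and the stated form $\tfrac12 K_{qn}(B^{\intercal}\otimes C+C^{\intercal}\otimes B)$ emerges. This is precisely the commutation-matrix swap identity $K_{pm}(A\otimes G)=(G\otimes A)K_{rn}$ (for $A$ of size $m\times n$ and $G$ of size $p\times r$), applied with $A=C^{\intercal}$ and $G=B$; making the four index pairs on the commutation matrices line up correctly is the only delicate bookkeeping. Finally, since the Hessian of $\phi$ is uniquely determined by $\phi$ and the assignment $d^2\phi\mapsto\tfrac12(M+M^{\intercal})$ is the content of the identification theorem, the reverse ($\Longleftarrow$) directions require no separate argument: the two displayed correspondences are simply the two readings of this single uniqueness statement.
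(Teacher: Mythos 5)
Your proposal cannot be matched against the paper's own argument for the simple reason that the paper has none: the lemma is imported from \cite{magnusneudecker2007} (p.~218), with the citation doing the work of a proof. Your blind derivation is, however, correct, and it is essentially the derivation Magnus and Neudecker themselves use to build their identification tables: everything is reduced to the second identification theorem, which says that $d^{2}\phi(X)=(\ve dX)^{\intercal}M(\ve dX)$ forces $\partial^{2}\phi/\partial(\ve X)\partial(\ve X)^{\intercal}=\frac{1}{2}(M+M^{\intercal})$. Both of your reductions check out, including the dimension bookkeeping: with $X$ of size $n\times q$ the first trace form requires $B$ to be $q\times q$ and $C$ to be $n\times n$, and $\tr(A^{\intercal}N)=(\ve A)^{\intercal}\ve N$ together with $\ve(C\,dX\,B)=(B^{\intercal}\otimes C)\ve dX$ gives $M=B^{\intercal}\otimes C$ directly; the second form requires $B,C$ to be $q\times n$, and your chain gives $M=K_{qn}(B^{\intercal}\otimes C)$, hence $M^{\intercal}=(B\otimes C^{\intercal})K_{nq}$, after which the swap identity $K_{qn}(C^{\intercal}\otimes B)=(B\otimes C^{\intercal})K_{nq}$ --- the standard commutation rule $K_{pm}(A\otimes G)=(G\otimes A)K_{rn}$ applied with $A=C^{\intercal}$ of size $n\times q$ and $G=B$ of size $q\times n$ --- indeed pulls the common factor $K_{qn}$ out front and yields $\frac{1}{2}K_{qn}(B^{\intercal}\otimes C+C^{\intercal}\otimes B)$; the index pairing you flagged as the delicate step is set up correctly. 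Your dispatch of the reverse implications is also legitimate: $d^{2}\phi=(\ve dX)^{\intercal}H(\ve dX)$ holds with $H$ the Hessian, and each symmetrised Kronecker form reproduces the corresponding trace form by running the same algebra backwards (the two terms in the symmetrisation are transposes of one another and hence equal as scalars). What your route buys is self-containedness at the level of this lemma; what it costs is that the identification theorem is taken as the primitive, and that theorem lives in the same book the paper cites --- so the paper's choice to cite rather than re-derive is defensible, though your argument could be dropped in as an inline proof without change.
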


\bigskip 

We are now ready to give the proof of Theorem \ref{prop Haihan score functions and second derivatives}.

\begin{proof}[Proof of Theorem \ref{prop Haihan score functions and second derivatives}]
For part (i), letting $A$ denote $D^{-1/2}\tilde{\Sigma}_T D^{-1/2}$, we take the first differential of $\ell_{T,D}(\theta,\mu)$ with respect to $\Omega(\theta)$:
\begin{align}
& d \ell_{T,D}(\theta,\mu) = -\frac{T}{2}d\log \envert[1]{e^{\Omega}}-\frac{1}{2}d\sum_{t=1}^{T}\text{tr} \sbr[2]{(y_t-\mu)^{\intercal} D^{-1/2} e^{-\Omega}D^{-1/2}(y_t-\mu)}\notag \\
&=-\frac{T}{2}d\log \envert[1]{e^{\Omega}}-\frac{T}{2}d\text{tr} \sbr[3]{D^{-1/2}\frac{1}{T}\sum_{t=1}^{T}(y_t-\mu)(y_t-\mu)^{\intercal} D^{-1/2} e^{-\Omega}}\notag\\
&=-\frac{T}{2}d\log \envert[1]{e^{\Omega}}-\frac{T}{2}d\text{tr} \sbr[1]{A e^{-\Omega}}=-\frac{T}{2}\text{tr}(e^{-\Omega}de^{\Omega})-\frac{T}{2}\text{tr} \del[1]{A de^{-\Omega}}\notag\\
&=-\frac{T}{2}\text{tr}(e^{-\Omega}de^{\Omega})+\frac{T}{2}\text{tr} \del[1]{A e^{-\Omega}(de^{\Omega})e^{-\Omega}}\notag\\
&=-\frac{T}{2}\text{tr}(e^{-\Omega}de^{\Omega})+\frac{T}{2}\text{tr} \del[1]{e^{-\Omega}A e^{-\Omega}de^{\Omega}}\label{align first differential}\\
& = \frac{T}{2}\text{tr} \sbr[2]{\del[1]{e^{-\Omega}A e^{-\Omega}-e^{-\Omega}}de^{\Omega}}=\frac{T}{2}\del[3]{\ve\sbr[2]{\del[1]{e^{-\Omega}A e^{-\Omega}-e^{-\Omega}}^{\intercal}}}^{\intercal}\ve de^{\Omega}\notag\\
&=\frac{T}{2}\del[2]{\ve\sbr[1]{e^{-\Omega}A e^{-\Omega}-e^{-\Omega}}}^{\intercal}\ve \sbr[3]{\int_{0}^{1}e^{(1-t)\Omega}(d\Omega) e^{t\Omega}dt}\notag\\
&=\frac{T}{2}\del[2]{\ve\sbr[1]{e^{-\Omega}A e^{-\Omega}-e^{-\Omega}}}^{\intercal} \sbr[3]{\int_{0}^{1}e^{t\Omega}\otimes e^{(1-t)\Omega}dt}d\ve\Omega\notag\\
&=\frac{T}{2}\del[2]{\ve\sbr[1]{e^{-\Omega}A e^{-\Omega}-e^{-\Omega}}}^{\intercal} \sbr[3]{\int_{0}^{1}e^{t\Omega}\otimes e^{(1-t)\Omega}dt}D_n Ed\theta\notag
\end{align}
where the fourth equality is due to that $d\log |X|=\text{tr}(X^{-1}dX)$ for any square matrix $X$, the fifth equality is due to that $dX^{-1}=-X^{-1}(dX)X^{-1}$, the six equality is due to the cyclic property of trace operator, the eighth equality is due to that $\text{tr}(AB)=(\ve [A^{\intercal}])^{\intercal}\ve B$, the ninth equality is due to that $de^{\Omega}=\int_{0}^{1}e^{(1-t)\Omega}(d\Omega) e^{t\Omega}dt$ (c.f. (10.15) in \cite{higham2008} p238), the second last equality is due to that $\ve (ABC)=(C^{\intercal}\otimes A)\ve B$, and the last equality is due to $\ve \Omega=D_n\vech \Omega=D_nE\theta$. Thus, we conclude that
\[\frac{\partial \ell_{T,D}(\theta,\mu)}{\partial \theta^{\intercal}}=\frac{T}{2}E^{\intercal}D_n^{\intercal}\sbr[3]{\int_{0}^{1}e^{t\Omega}\otimes e^{(1-t)\Omega}dt}\ve\sbr[1]{e^{-\Omega}D^{-1/2}\tilde{\Sigma}_T D^{-1/2} e^{-\Omega}-e^{-\Omega}} .\]

\bigskip

For part (ii), the $s\times s$ block of the Hessian matrix of (\ref{align likelihood fun when D known}) corresponding to $\theta$ is more difficult to derive. There are two approaches; they give the same Hessian but sometimes it is difficult to see the equivalence because of the presence of Kronecker products, duplication matrices etc. The first approach is to differentiate the score function with respect to $\theta$ again. The second approach is to start from (\ref{align first differential}), take differential again, manipulate the final result into the canonical form, and extract the Hessian from the canonical form. The second approach is due to \cite{magnusneudecker2007}; \cite{minka2000} provided an easily accessible introduction to this approach. We shall use the second approach to derive the Hessian matrix.

There are two terms in (\ref{align first differential}). The first term could be simplified into
\begin{align*}
&-\frac{T}{2}\text{tr}(e^{-\Omega}de^{\Omega})=-\frac{T}{2}\text{tr}\del[3]{e^{-\Omega}\int_{0}^{1}e^{(1-t)\Omega}(d\Omega) e^{t\Omega}dt}=-\frac{T}{2}\int_{0}^{1}\text{tr}\del[1]{e^{-\Omega}e^{(1-t)\Omega}(d\Omega) e^{t\Omega}}dt\\
&=-\frac{T}{2}\int_{0}^{1}\text{tr}\del[1]{e^{-t\Omega}(d\Omega) e^{t\Omega}}dt=-\frac{T}{2}\int_{0}^{1}\text{tr}\del[1]{d\Omega }dt=-\frac{T}{2}\text{tr}\del[1]{d\Omega }
\end{align*}
whence we see that it is not a function of $\Omega$ ($d\Omega$ is not a function of $\Omega$). Thus taking differential of (\ref{align first differential}) will cause this term drop out. We now take the differential of the second term in (\ref{align first differential}):
\begin{align*}
& d \frac{T}{2}\text{tr} \del[1]{e^{-\Omega}A e^{-\Omega}de^{\Omega}}= d \frac{T}{2}\text{tr} \del[3]{e^{-\Omega}A e^{-\Omega}\int_{0}^{1}e^{(1-t)\Omega}(d\Omega) e^{t\Omega}dt}\\
&= d\frac{T}{2} \int_{0}^{1}\text{tr} \del[2]{e^{(t-1)\Omega}A e^{-t\Omega}d\Omega }dt= \frac{T}{2} \int_{0}^{1}\text{tr} \del[2]{(de^{(t-1)\Omega})A e^{-t\Omega}d\Omega +e^{(t-1)\Omega}A (de^{-t\Omega})d\Omega}dt\\
&=\frac{T}{2} \int_{0}^{1}\text{tr} \del[2]{\int_{0}^{1}e^{(1-s)(t-1)\Omega}(d(t-1)\Omega)e^{s(t-1)\Omega}dsA e^{-t\Omega}d\Omega}dt\\ &\qquad+\frac{T}{2} \int_{0}^{1}\text{tr} \del[2]{e^{(t-1)\Omega}A \int_{0}^{1}e^{-(1-s)t\Omega}(d(-t)\Omega)e^{-st\Omega}dsd\Omega}dt\\
&=-\frac{T}{2} \int_{0}^{1}\int_{0}^{1}\text{tr} \del[2]{e^{-(1-s)(1-t)\Omega}(d\Omega)e^{-s(1-t)\Omega}A e^{-t\Omega}d\Omega}ds\cdot (1-t)dt
\\ &\qquad-\frac{T}{2} \int_{0}^{1}\int_{0}^{1}\text{tr} \del[2]{e^{-(1-t)\Omega}A e^{-(1-s)t\Omega}(d\Omega)e^{-st\Omega}d\Omega}ds\cdot tdt.
\end{align*}
We next invoke Lemma \ref{thmmagnusneudecker2007p218} to get
\begin{align*}
&\frac{\partial^2 \ell_{T,D}(\theta,\mu)}{\partial \ve\Omega \partial (\ve \Omega)^{\intercal}}=\\
&-\frac{T}{2} \int_{0}^{1}\int_{0}^{1}\frac{1}{2}K_{n,n}\del[2]{ e^{-(1-s)(1-t)\Omega}\otimes e^{-s(1-t)\Omega}A e^{-t\Omega}+e^{-t\Omega}Ae^{-s(1-t)\Omega}\otimes e^{-(1-s)(1-t)\Omega}}ds\cdot (1-t)dt
\\ &\qquad-\frac{T}{2} \int_{0}^{1}\int_{0}^{1}\frac{1}{2}K_{n,n} \del[2]{e^{-(1-s)t\Omega}Ae^{-(1-t)\Omega}\otimes e^{-st\Omega}+e^{-st\Omega}\otimes e^{-(1-t)\Omega}A e^{-(1-s)t\Omega} }ds\cdot tdt\\
&=-\frac{T}{2} \int_{0}^{1}\int_{0}^{1}\frac{1}{2}K_{n,n}\del[2]{ e^{-st\Omega}\otimes e^{-(1-s)t\Omega}A e^{-(1-t)\Omega}+e^{-(1-t)\Omega}Ae^{-(1-s)t\Omega}\otimes e^{-st\Omega}}ds\cdot tdt
\\ &\qquad-\frac{T}{2} \int_{0}^{1}\int_{0}^{1}\frac{1}{2}K_{n,n} \del[2]{e^{-(1-s)t\Omega}Ae^{-(1-t)\Omega}\otimes e^{-st\Omega}+e^{-st\Omega}\otimes e^{-(1-t)\Omega}A e^{-(1-s)t\Omega} }ds\cdot tdt
\end{align*}
where the second equality is due to change of variables $1-t\mapsto t$ and $1-s\mapsto s$ for the first term only. Note that although we have used symmetry of $\Omega$ throughout the derivation, we have not yet incorporated this fact into the Hessian. In our case, there is no need to incorporate symmetry of $\Omega$ into the Hessian because our ultimate goal is to get the Hessian in terms of the unique elements of $\Omega$, $\theta$ (see \cite{minka2000} for more explanations of this). Thus the final Hessian in terms of $\theta$ is
\begin{align*}
&\frac{\partial^2 \ell_{T,D}(\theta,\mu)}{\partial \theta \partial \theta^{\intercal}}=\\
&  -\frac{T}{2} \int_{0}^{1}\int_{0}^{1}\frac{1}{2}E^{\intercal}D_n^{\intercal}K_{n,n}\del[1]{ e^{-st\Omega}\otimes e^{-(1-s)t\Omega}A e^{-(1-t)\Omega}+e^{-(1-t)\Omega}Ae^{-(1-s)t\Omega}\otimes e^{-st\Omega}}ds\cdot tdt D_n E\\
&\qquad  -\frac{T}{2} \int_{0}^{1}\int_{0}^{1}\frac{1}{2}E^{\intercal}D_n^{\intercal}K_{n,n} \del[1]{e^{-(1-s)t\Omega}Ae^{-(1-t)\Omega}\otimes e^{-st\Omega}+e^{-st\Omega}\otimes e^{-(1-t)\Omega}A e^{-(1-s)t\Omega} }ds\cdot tdt D_n E\\
&=  -\frac{T}{4} E^{\intercal}D_n^{\intercal}\int_{0}^{1}\int_{0}^{1}\del[1]{ e^{-st\Omega}\otimes e^{-(1-s)t\Omega}A e^{-(1-t)\Omega}+e^{-(1-t)\Omega}Ae^{-(1-s)t\Omega}\otimes e^{-st\Omega}}ds\cdot tdt D_n E\\
&\qquad  -\frac{T}{4}E^{\intercal}D_n^{\intercal} \int_{0}^{1}\int_{0}^{1} \del[1]{e^{-(1-s)t\Omega}Ae^{-(1-t)\Omega}\otimes e^{-st\Omega}+e^{-st\Omega}\otimes e^{-(1-t)\Omega}A e^{-(1-s)t\Omega} }ds\cdot tdt D_n E
\end{align*}
where the second equality is due to that $K_{n,n}D_n=D_n$ and symmetry of $K_{n,n}$ (see (52) of \cite{magnusneudecker1986}).  

\bigskip

For part (iii), note that $\mathbb{E}[A]=\mathbb{E}[D^{-1/2}\tilde{\Sigma}_T D^{-1/2}]=\Theta=e^{\Omega}$. Then by merging terms, we have
\begin{align*}
\Upsilon_D& =   \frac{1}{2} E^{\intercal}D_n^{\intercal}\int_{0}^{1}\int_{0}^{1}\del[1]{ e^{-st\Omega}\otimes  e^{st\Omega}+ e^{st\Omega}\otimes e^{-st\Omega}}ds\cdot tdt D_n E.
\end{align*}
To prove the equivalence between (\ref{align Hessian form 1}) and (\ref{align Hessian form 2}), it suffices to show
\begin{equation}
\label{eqn Hessian two forms equal}
\int_{0}^{1}\int_{0}^{1}\del[1]{ e^{-st\Omega}\otimes  e^{st\Omega}+ e^{st\Omega}\otimes e^{-st\Omega}}ds\cdot tdt= \int_{0}^{1}\int_{0}^{1}e^{(t+s-1)\Omega}\otimes e^{(1-t-s)\Omega}dsdt.
\end{equation}

Suppose $\Theta=e^{\Omega}= Q^{\intercal}\text{diag}(\lambda_1,\ldots,\lambda_n)Q$ (orthogonal diagonalization). The eigenvalues $\lambda_j$s are all positive but need not be distinct.  We first consider the first term of (\ref{eqn Hessian two forms equal}). By definition of matrix function, we have
\[e^{-st\Omega}=Q^{\intercal}\text{diag}( \lambda_1^{-st},\ldots, \lambda_n^{-st})Q\qquad e^{st\Omega}=Q^{\intercal}\text{diag}( \lambda_1^{st},\ldots, \lambda_n^{st})Q\]
\begin{align*}
& e^{-st\Omega}\otimes  e^{st\Omega}+e^{st\Omega}\otimes e^{-st\Omega}=\\
&(Q\otimes Q)^{\intercal}\sbr[2]{\text{diag}( \lambda_1^{-st},\ldots, \lambda_n^{-st})\otimes \text{diag}( \lambda_1^{st},\ldots, \lambda_n^{st})+\text{diag}( \lambda_1^{st},\ldots, \lambda_n^{st})\otimes \text{diag}( \lambda_1^{-st},\ldots, \lambda_n^{-st}) }(Q\otimes Q)\\
&=: (Q\otimes Q)^{\intercal}M_1(Q\otimes Q),
\end{align*}
where $M_1$ is an $n^2\times n^2$ diagonal matrix whose $[(i-1)n+j]$th diagonal entry is $\del[1]{\frac{\lambda_j}{\lambda_i}}^{st}+\del[1]{\frac{\lambda_i}{\lambda_j}}^{st}$ for $i,j=1,\ldots,n$. Thus
\begin{align*}
\int_{0}^{1}\int_{0}^{1}\del[1]{ e^{-st\Omega}\otimes  e^{st\Omega}+ e^{st\Omega}\otimes e^{-st\Omega}}ds\cdot tdt= (Q\otimes Q)^{\intercal}\int_{0}^{1}\int_{0}^{1}M_1 ds\cdot t dt (Q\otimes Q),
\end{align*}
where $\int_{0}^{1}\int_{0}^{1}M_1t ds dt$ is an $n^2\times n^2$ diagonal matrix whose  $[(i-1)n+j]$th diagonal entry is 
\[\left\lbrace \begin{array}{cc}
1 & \text{ if } i=j\\
1 & \text{ if } i\neq j, \lambda_i=\lambda_j\\
\frac{1}{\sbr[1]{\log \del[1]{\frac{\lambda_i}{\lambda_j}}}^2}\sbr[2]{\frac{\lambda_i}{\lambda_j}+\frac{\lambda_j}{\lambda_i}-2} & \text{ if } i\neq j, \lambda_i\neq \lambda_j
\end{array} \right. \]
for $i,j=1,\ldots,n$. To see this,
\begin{align*}
&\int_{0}^{1}\int_{0}^{1}\del[2]{\frac{\lambda_j}{\lambda_i}}^{st} t ds dt = \int_{0}^{1} \sbr[4]{\frac{\del[2]{\frac{\lambda_j}{\lambda_i}}^{st}}{\log \del[1]{\frac{\lambda_j}{\lambda_i}}^t}}^1_0 t dt = \frac{1}{\log \del[1]{\frac{\lambda_j}{\lambda_i}}}\int_{0}^{1} \sbr[3]{\del[2]{\frac{\lambda_j}{\lambda_i}}^{t}-1}  dt\\
&=\frac{1}{\sbr[1]{\log \del[1]{\frac{\lambda_j}{\lambda_i}}}^2}\del [3]{\frac{\lambda_j}{\lambda_i}-1-\log \del[3]{\frac{\lambda_j}{\lambda_i}}}.
\end{align*}
Similarly
\[\int_{0}^{1}\int_{0}^{1}\del[2]{\frac{\lambda_i}{\lambda_j}}^{st} t ds dt=\frac{1}{\sbr[1]{\log \del[1]{\frac{\lambda_i}{\lambda_j}}}^2}\del [3]{\frac{\lambda_i}{\lambda_j}-1-\log \del[3]{\frac{\lambda_i}{\lambda_j}}},\]
whence we have
\[\int_{0}^{1}\int_{0}^{1}\sbr[3]{\del[2]{\frac{\lambda_j}{\lambda_i}}^{st}+\del[2]{\frac{\lambda_i}{\lambda_j}}^{st}} t ds dt=\frac{1}{\sbr[1]{\log \del[1]{\frac{\lambda_i}{\lambda_j}}}^2}\sbr[2]{\frac{\lambda_i}{\lambda_j}+\frac{\lambda_j}{\lambda_i}-2}.\]

We now consider the second term of (\ref{eqn Hessian two forms equal}).  By definition of matrix function, we have
\begin{align*}
& e^{(t+s-1)\Omega}  = Q^{\intercal}\text{diag}( \lambda_1^{(t+s-1)},\ldots, \lambda_n^{(t+s-1)})Q\qquad e^{(1-t-s)\Omega} = Q^{\intercal}\text{diag}( \lambda_1^{(1-t-s)},\ldots, \lambda_n^{(1-t-s)})Q\\
& e^{(t+s-1)\Omega}\otimes e^{(1-t-s)\Omega} = (Q\otimes Q)^{\intercal}\sbr[2]{\text{diag}( \lambda_1^{(t+s-1)},\ldots, \lambda_n^{(t+s-1)})\otimes \text{diag}( \lambda_1^{(1-t-s)},\ldots, \lambda_n^{(1-t-s)}) }(Q\otimes Q)\\
&=: (Q\otimes Q)^{\intercal}M_2(Q\otimes Q),
\end{align*}
where $M_2$ is an $n^2\times n^2$ diagonal matrix whose $[(i-1)n+j]$th diagonal entry is $\del[1]{\frac{\lambda_i}{\lambda_j}}^{s+t-1}$ for $i,j=1,\ldots,n$. Thus
\begin{align*}
 \int_{0}^{1}\int_{0}^{1}e^{(t+s-1)\Omega}\otimes e^{(1-t-s)\Omega}dsdt=(Q\otimes Q)^{\intercal}\int_{0}^{1}\int_{0}^{1}M_2dsdt(Q\otimes Q)
\end{align*}
where $\int_{0}^{1}\int_{0}^{1}M_2 ds dt$ is an $n^2\times n^2$ diagonal matrix whose  $[(i-1)n+j]$th diagonal entry is
\[\left\lbrace \begin{array}{cc}
1 & \text{ if } i=j\\
1 & \text{ if } i\neq j, \lambda_i=\lambda_j\\
\frac{1}{\sbr[1]{\log \del[1]{\frac{\lambda_i}{\lambda_j}}}^2}\sbr[2]{\frac{\lambda_i}{\lambda_j}+\frac{\lambda_j}{\lambda_i}-2} & \text{ if } i\neq j, \lambda_i\neq \lambda_j
\end{array} \right. \]
for $i,j=1,\ldots,n$. To see this,
\begin{align*}
&\int_{0}^{1}\int_{0}^{1}\del[2]{\frac{\lambda_i}{\lambda_j}}^{s+t-1} ds dt =\frac{\lambda_j}{\lambda_i}\int_{0}^{1}\del[2]{\frac{\lambda_i}{\lambda_j}}^{s} ds\int_{0}^{1}\del[2]{\frac{\lambda_i}{\lambda_j}}^{t} dt\\
&=\frac{\lambda_j}{\lambda_i}\sbr[3]{\int_{0}^{1}\del[2]{\frac{\lambda_i}{\lambda_j}}^{s} ds}^2=\frac{\lambda_j}{\lambda_i}\sbr[4]{\sbr[4]{\frac{\del [2]{\frac{\lambda_i}{\lambda_j}}^s}{\log \del [2]{\frac{\lambda_i}{\lambda_j}}}}_0^1}^2=\frac{1}{\sbr[1]{\log \del[1]{\frac{\lambda_i}{\lambda_j}}}^2}\frac{\lambda_j}{\lambda_i}\sbr[3]{\frac{\lambda_i}{\lambda_j}-1}^2. 
\end{align*}

Comparing $\int_{0}^{1}\int_{0}^{1}M_1t ds dt$ with $\int_{0}^{1}\int_{0}^{1}M_2 ds dt$, we realise (\ref{eqn Hessian two forms equal}) hold.

\bigskip

For part (iv), using the expression for $\frac{\partial\ell_{T,D}(\theta,\mu)}{\partial\theta^{\intercal}}$ and the fact that it has zero expectation, we have
\begin{align*}
&\mathbb{E}\sbr[3]{\frac{1}{T}\frac{\partial\ell_{T,D}(\theta,\mu)}{\partial\theta^{\intercal}}\frac{\partial\ell_{T,D}(\theta,\mu)}{\partial\theta} }=\frac{T}{4}E^{\intercal}D_n^{\intercal}\Psi \text{var}\del[2]{\ve \del[1]{e^{-\Omega}D^{-1/2}\tilde{\Sigma}_TD^{-1/2}e^{-\Omega}}}\Psi D_nE\\
& =\frac{T}{4}E^{\intercal}D_n^{\intercal}\Psi \del[1]{e^{-\Omega}\otimes e^{-\Omega}}(D^{-1/2}\otimes D^{-1/2}) \text{var}\del[3]{\ve \sbr[3]{\frac{1}{T}\sum_{t=1}^{T}(y_t-\mu)(y_t-\mu)^{\intercal}}}\\
&\qquad \cdot (D^{-1/2}\otimes D^{-1/2})\del[1]{e^{-\Omega}\otimes e^{-\Omega}}\Psi D_nE\\
& =\frac{1}{4}E^{\intercal}D_n^{\intercal}\Psi \del[1]{e^{-\Omega}\otimes e^{-\Omega}}(D^{-1/2}\otimes D^{-1/2}) \text{var}\del[2]{\ve \sbr[1]{(y_t-\mu)(y_t-\mu)^{\intercal}}}\\
&\qquad \cdot (D^{-1/2}\otimes D^{-1/2})\del[1]{e^{-\Omega}\otimes e^{-\Omega}}\Psi D_nE\\
& =\frac{1}{4}E^{\intercal}D_n^{\intercal}\Psi \del[1]{e^{-\Omega}\otimes e^{-\Omega}}(D^{-1/2}\otimes D^{-1/2}) 2D_{n}D_{n}^{+}(\Sigma\otimes\Sigma)(D^{-1/2}\otimes D^{-1/2})\del[1]{e^{-\Omega}\otimes e^{-\Omega}}\Psi D_nE\\
& =\frac{1}{4}E^{\intercal}D_n^{\intercal}\Psi \del[1]{e^{-\Omega}\otimes e^{-\Omega}}(D^{-1/2}\otimes D^{-1/2}) (I_{n^2}+K_{n,n})(\Sigma\otimes\Sigma)(D^{-1/2}\otimes D^{-1/2})\del[1]{e^{-\Omega}\otimes e^{-\Omega}}\Psi D_nE\\
& =\frac{1}{4}E^{\intercal}D_n^{\intercal}\Psi \del[1]{e^{-\Omega}\otimes e^{-\Omega}}(D^{-1/2}\otimes D^{-1/2}) (\Sigma\otimes\Sigma)(D^{-1/2}\otimes D^{-1/2})\del[1]{e^{-\Omega}\otimes e^{-\Omega}}\Psi D_nE\\
&\qquad +\frac{1}{4}E^{\intercal}D_n^{\intercal}\Psi \del[1]{e^{-\Omega}\otimes e^{-\Omega}}(D^{-1/2}\otimes D^{-1/2}) K_{n,n}(\Sigma\otimes\Sigma)(D^{-1/2}\otimes D^{-1/2})\del[1]{e^{-\Omega}\otimes e^{-\Omega}}\Psi D_nE\\
& =\frac{1}{4}E^{\intercal}D_n^{\intercal}\Psi \del[1]{e^{-\Omega}\otimes e^{-\Omega}}(D^{-1/2}\otimes D^{-1/2}) (\Sigma\otimes\Sigma)(D^{-1/2}\otimes D^{-1/2})\del[1]{e^{-\Omega}\otimes e^{-\Omega}}\Psi D_nE\\
&\qquad +\frac{1}{4}E^{\intercal}D_n^{\intercal}\Psi \del[1]{e^{-\Omega}\otimes e^{-\Omega}}(D^{-1/2}\otimes D^{-1/2}) (\Sigma\otimes\Sigma)(D^{-1/2}\otimes D^{-1/2})\del[1]{e^{-\Omega}\otimes e^{-\Omega}}K_{n,n}\Psi D_nE\\
& =\frac{1}{2}E^{\intercal}D_n^{\intercal}\Psi \del[1]{e^{-\Omega}\otimes e^{-\Omega}}(D^{-1/2}\otimes D^{-1/2}) (\Sigma\otimes\Sigma)(D^{-1/2}\otimes D^{-1/2})\del[1]{e^{-\Omega}\otimes e^{-\Omega}}\Psi D_nE\\
& =\frac{1}{2}E^{\intercal}D_n^{\intercal}\Psi \del[1]{e^{-\Omega}\otimes e^{-\Omega}}\Psi D_nE,
\end{align*}
where the third equality is due to weak stationarity of $y_t$ and (\ref{eqn U mds}) via Assumption \ref{assu mds}, the fifth equality is due to that $2D_{n}D_{n}^{+}=I_{n^2}+K_{n,n}$, the seventh equality is due to that $K_{n,n}(A\otimes B)=(B\otimes A)K_{n,n}$ for arbitrary $n\times n$ matrices $A$ and $B$, and the second last equality is due to 
\[K_{n,n}\Psi=\int_{0}^{1}K_{n,n}\del[1]{e^{t\Omega}\otimes e^{(1-t)\Omega}}dt=\int_{0}^{1} e^{(1-t)\Omega}\otimes e^{t\Omega} dt=\int_{0}^{1} e^{s\Omega}\otimes e^{(1-s)\Omega} dt=\Psi,\]
via change of variable $1-t\mapsto s$.
\end{proof}

\subsection{Proof of Theorem \ref{thm one step estimator asymptotic normality}}
\label{sec A8}

In this subsection, we give a proof for Theorem \ref{thm one step estimator asymptotic normality}. We will first give some preliminary lemmas leading to the proof of this theorem.

\begin{lemma}
\label{thm fandao bound on exponent} 
For arbitrary $n\times n$ complex matrices $A$ and $E$, and for any matrix norm $\|\cdot\|$,
\begin{align*}
\|e^{A+E}-e^{A}\|\leq\|E\|\exp(\|E\|)\exp(\|A\|).
\end{align*}
\end{lemma}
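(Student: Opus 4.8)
The plan is to work directly from the power series definition of the matrix exponential and compare the two expansions term by term. I would begin by writing
\[e^{A+E}-e^{A}=\sum_{k=0}^{\infty}\frac{(A+E)^{k}-A^{k}}{k!},\]
which is legitimate since both exponential series converge absolutely in any submultiplicative matrix norm, so the norm may be passed through the infinite sum by the triangle inequality applied to partial sums. The crux is then a clean bound on $\|(A+E)^{k}-A^{k}\|$.

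To obtain that bound I would expand $(A+E)^{k}$ as the sum over all $2^{k}$ noncommutative ``words'' $X_{1}X_{2}\cdots X_{k}$ with each $X_{i}\in\{A,E\}$. The unique word consisting entirely of $A$'s is exactly $A^{k}$, so $(A+E)^{k}-A^{k}$ is precisely the sum of those words containing at least one factor $E$. Submultiplicativity yields $\|X_{1}\cdots X_{k}\|\le\|A\|^{k-j}\|E\|^{j}$ for any word with $j$ copies of $E$, and there are $\binom{k}{j}$ such words, so the triangle inequality and the binomial theorem give
\[\|(A+E)^{k}-A^{k}\|\le\sum_{j=1}^{k}\binom{k}{j}\|A\|^{k-j}\|E\|^{j}=(\|A\|+\|E\|)^{k}-\|A\|^{k}.\]
Dividing by $k!$ and summing over $k$, and recognising the two resulting scalar exponential series, I would conclude
\[\|e^{A+E}-e^{A}\|\le e^{\|A\|+\|E\|}-e^{\|A\|}=e^{\|A\|}\bigl(e^{\|E\|}-1\bigr).\]

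The final step is to absorb $e^{\|E\|}-1$ into the stated bound via the elementary scalar inequality $e^{x}-1\le xe^{x}$ for $x\ge 0$, which I would justify by $e^{x}-1=\int_{0}^{x}e^{t}\,dt\le\int_{0}^{x}e^{x}\,dt=xe^{x}$ (or equally by comparing $\sum_{j\ge1}x^{j}/j!$ with $\sum_{j\ge1}x^{j}/(j-1)!$ term by term). Taking $x=\|E\|$ then gives $e^{\|A\|}(e^{\|E\|}-1)\le\|E\|\exp(\|E\|)\exp(\|A\|)$, as required. The one point I would flag as genuinely load-bearing is the reliance on submultiplicativity of the norm, which is implicit in the phrase ``matrix norm'' (and consistent with the convention of \cite{higham2008}); it is exactly what makes the word-by-word estimate valid. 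Everything else—the interchange of norm and sum, the combinatorial count, and the scalar inequality—is routine bookkeeping rather than a real obstacle.
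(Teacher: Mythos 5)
Your proof is correct, and it differs from the paper in an essential way: the paper does not actually prove this lemma — its entire proof is the citation ``See \cite{hornjohnson1991} Corollary 6.2.32 p430'' — whereas you supply a self-contained argument. Each of your steps checks out: the expansion of $(A+E)^k$ into $2^k$ noncommutative words, the count of $\binom{k}{j}$ words containing $j$ copies of $E$, and submultiplicativity give $\|(A+E)^k-A^k\|\le(\|A\|+\|E\|)^k-\|A\|^k$; summing against $1/k!$ gives $e^{\|A\|+\|E\|}-e^{\|A\|}=e^{\|A\|}\bigl(e^{\|E\|}-1\bigr)$; and the scalar inequality $e^x-1\le x e^x$ for $x\ge 0$ delivers the stated bound. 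Note that your intermediate bound $e^{\|A\|}\bigl(e^{\|E\|}-1\bigr)$ is in fact slightly sharper than the lemma's conclusion, which you only relax at the very end. You are also right to flag submultiplicativity as the load-bearing hypothesis: ``matrix norm'' in the convention of \cite{hornjohnson1991} and \cite{higham2008} means a submultiplicative norm, and without that property the word-by-word estimate has no justification. What the paper's citation buys is brevity and deference to a standard reference; what your argument buys is an elementary, line-by-line checkable proof that makes explicit exactly which property of the norm is being used.
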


\begin{proof}
See \cite{hornjohnson1991} Corollary 6.2.32 p430.
\end{proof}

\bigskip

 Define
\[\Xi:=\int_0^1\int_0^1 \Theta^{t+s-1}\otimes \Theta^{1-t-s}dtds\qquad \hat{\Xi}_{T,D}:=\int_0^1\int_0^1 \hat{\Theta}_{T,D}^{t+s-1}\otimes \hat{\Theta}_{T,D}^{1-t-s}dtds\]
such that $\Upsilon_D$ and $\hat{\Upsilon}_{T,D}$ could be denoted $\frac{1}{2}E^{\intercal}D_{n}^{\intercal}\Xi D_{n}E$ and $\frac{1}{2}E^{\intercal}D_{n}^{\intercal}\hat{\Xi}_{T,D} D_{n}E$, respectively.

\begin{lemma}
\label{prop middle of Hessian}
Suppose Assumptions \ref{assu subgaussian vector}%
(i), \ref{assu mixing}, \ref{assu n indexed by T}(i) and \ref{assu about D and Dhat}(i) hold with $1/r_1+1/r_2>1$. Then
\begin{enumerate}[(i)]
\item $\Xi$ has minimum eigenvalue bounded away from zero by an absolute constant and maximum eigenvalue bounded from above by an absolute constant.

\item $\hat{\Xi}_{T,D}$ has minimum eigenvalue bounded away from zero by an absolute constant and maximum eigenvalue bounded from above by an absolute constant with probability approaching 1.

\item
\[\|\hat{\Xi}_{T,D}-\Xi\|_{\ell_2}=O_p\del [3]{\sqrt{\frac{n}{T}}}.\]

\item
\[\|\Psi\|_{\ell_2}=\enVert[3]{\int_{0}^{1}e^{t\Omega}\otimes
e^{(1-t)\Omega}dt}_{\ell_2}=O(1).\]
\end{enumerate}
\end{lemma}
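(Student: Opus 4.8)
The plan is to reduce parts (i), (ii) and (iv) to scalar analyses of the eigenvalues of $\Theta$ (resp.\ $\hat{\Theta}_{T,D}$), reusing the computation already carried out in the proof of Theorem \ref{prop Haihan score functions and second derivatives}(iii), and to treat part (iii), which is the real difficulty, through the exponential representation of matrix powers. Writing $\Theta=e^{\Omega}=Q^{\intercal}\text{diag}(\lambda_1,\ldots,\lambda_n)Q$, the integrand of $\Xi$ equals $(Q\otimes Q)^{\intercal}M_2(Q\otimes Q)$ with $M_2$ diagonal, so $\Xi=(Q\otimes Q)^{\intercal}\big[\int_0^1\int_0^1 M_2\,dsdt\big](Q\otimes Q)$; since $Q\otimes Q$ is orthogonal, the eigenvalues of $\Xi$ are exactly the diagonal entries computed there, namely $1$ when $\lambda_i=\lambda_j$ and $f(\log(\lambda_i/\lambda_j))$ otherwise, where $f(x):=2(\cosh x-1)/x^2$. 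Because $\cosh x-1\geq x^2/2$ we have $f\geq 1$, which gives the lower bound on the minimum eigenvalue. For the upper bound, $f$ is even, continuous and increasing in $|x|$; since the eigenvalues of $\Theta$ are bounded away from $0$ and from above by absolute constants (Lemma \ref{prop mini eigenvalue}(ii)), the ratios $\lambda_i/\lambda_j$, hence $|\log(\lambda_i/\lambda_j)|$, are uniformly bounded, so $f$ is bounded above by an absolute constant. This proves (i). Part (ii) is identical with $\hat{\Theta}_{T,D}$: by Weyl's inequality $\max_i|\hat\lambda_i-\lambda_i|\leq\|\hat{\Theta}_{T,D}-\Theta\|_{\ell_2}=O_p(\sqrt{n/T})=o_p(1)$, so with probability approaching one the eigenvalues of $\hat{\Theta}_{T,D}$ lie in a fixed compact subset of $(0,\infty)$ and the same bounds on $f$ apply.

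For (iv) I bound the norm through the integral, $\|\Psi\|_{\ell_2}\leq\int_0^1\|e^{t\Omega}\|_{\ell_2}\|e^{(1-t)\Omega}\|_{\ell_2}\,dt$, using $\|A\otimes B\|_{\ell_2}=\|A\|_{\ell_2}\|B\|_{\ell_2}$ (Lemma \ref{lemma l2norm of kronecker product}). Since $\|e^{t\Omega}\|_{\ell_2}=\max_i\lambda_i^{t}$ and $t\in[0,1]$, the eigenvalue bounds give $\|e^{t\Omega}\|_{\ell_2}\leq\max(1,\bar{c})$ uniformly in $t$, so the integrand is bounded by an absolute constant and $\|\Psi\|_{\ell_2}=O(1)$.

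Part (iii) is the crux. I again pass the operator norm inside the double integral and, writing $a:=t+s-1\in[-1,1]$, split each integrand by adding and subtracting a cross term,
\[\hat{\Theta}_{T,D}^{a}\otimes\hat{\Theta}_{T,D}^{-a}-\Theta^{a}\otimes\Theta^{-a}=(\hat{\Theta}_{T,D}^{a}-\Theta^{a})\otimes\hat{\Theta}_{T,D}^{-a}+\Theta^{a}\otimes(\hat{\Theta}_{T,D}^{-a}-\Theta^{-a}).\]
Taking $\ell_2$ norms and factoring the Kronecker products, the nuisance factors $\|\hat{\Theta}_{T,D}^{-a}\|_{\ell_2}$ and $\|\Theta^{a}\|_{\ell_2}$ are $O_p(1)$ and $O(1)$ uniformly in $a\in[-1,1]$ by the eigenvalue bounds of (i)--(ii). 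It then remains to bound $\|\hat{\Theta}_{T,D}^{a}-\Theta^{a}\|_{\ell_2}$ uniformly in $a$, for which I use $\Theta^{a}=e^{a\Omega}$ and $\hat{\Theta}_{T,D}^{a}=e^{a\hat\Omega}$ (valid on the probability-approaching-one event that $\hat{\Theta}_{T,D}$ is positive definite, by (ii)) together with Lemma \ref{thm fandao bound on exponent}: with $A=a\Omega$, $E=a(\hat\Omega-\Omega)$ and $|a|\leq1$,
\[\|e^{a\hat\Omega}-e^{a\Omega}\|_{\ell_2}\leq\|\hat\Omega-\Omega\|_{\ell_2}\exp(\|\hat\Omega-\Omega\|_{\ell_2})\exp(\|\Omega\|_{\ell_2}).\]
Since $\|\Omega\|_{\ell_2}=\max_i|\log\lambda_i|=O(1)$ and $\|\hat\Omega-\Omega\|_{\ell_2}=o_p(1)$, the exponential factors are $O_p(1)$, so everything reduces to $\|\hat\Omega-\Omega\|_{\ell_2}=O_p(\sqrt{n/T})$, which follows from the operator-norm rate for $\hat{\Theta}_{T,D}$ (Theorem \ref{thm main rate of convergence}) and local Lipschitz continuity of the matrix logarithm on matrices with eigenvalues bounded away from zero. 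As all bounds are uniform in $(t,s)\in[0,1]^2$, integrating over the unit square preserves the rate and yields $\|\hat{\Xi}_{T,D}-\Xi\|_{\ell_2}=O_p(\sqrt{n/T})$.

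The main obstacle is exactly this uniform-in-exponent matrix power perturbation in (iii): a direct bound on $\|\hat{\Theta}_{T,D}^{a}-\Theta^{a}\|_{\ell_2}$ is delicate because $X\mapsto X^{a}$ is not globally Lipschitz and negative exponents require invertibility. Routing through $e^{a\Omega}$ and Lemma \ref{thm fandao bound on exponent} sidesteps both issues and converts the problem into the already-available logarithm rate; the only remaining care is to secure the positive-definiteness event and the boundedness of $\|\Omega\|_{\ell_2}$, both of which are delivered by (i)--(ii).
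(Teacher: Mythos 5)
Your proof is correct, and for the central part (iii) it follows essentially the paper's own route: the same cross-term Kronecker decomposition, the same uniform $O_p(1)$/$O(1)$ bounds on the power factors, and the same key perturbation bound (Lemma \ref{thm fandao bound on exponent}) applied to $e^{a\hat\Omega}-e^{a\Omega}$ with $|a|\leq 1$, reducing the problem to the rate for $\|\log\hat\Theta_{T,D}-\log\Theta\|_{\ell_2}$. Parts (i)--(ii) also match the paper's argument, which likewise reads off the eigenvalues of $\Xi$ from the diagonalization in the proof of Theorem \ref{prop Haihan score functions and second derivatives}; your explicit lower bound $2(\cosh x-1)/x^2\geq 1$ and the Weyl-inequality argument for $\hat\Theta_{T,D}$ are, if anything, more self-contained than the paper's appeal to Lemma \ref{prop mini eigenvalue}. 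Two differences are worth recording. First, for part (iv) the paper diagonalizes $\Theta^{t}\otimes\Theta^{1-t}$ and computes the integrated diagonal entries $(\lambda_i-\lambda_j)/(\log\lambda_i-\log\lambda_j)$ explicitly, whereas you simply bound
\[
\|\Psi\|_{\ell_2}\leq\int_0^1\|\Theta^{t}\|_{\ell_2}\,\|\Theta^{1-t}\|_{\ell_2}\,dt
\]
using Lemma \ref{lemma l2norm of kronecker product}; this is shorter and equally valid. Second, at the end of (iii) the paper obtains $\|\log\hat\Theta_{T,D}-\log\Theta\|_{\ell_2}=O_p(\sqrt{n/T})$ directly from Theorem \ref{thm main rate of convergence}, while you derive it from the rate for $\|\hat\Theta_{T,D}-\Theta\|_{\ell_2}$ by invoking ``local Lipschitz continuity of the matrix logarithm.'' That property does hold on the relevant set, but it is not automatic --- a scalar Lipschitz function need not be operator Lipschitz --- so this step deserves its own justification, for instance via the integral representation
\[
\log A-\log B=\int_0^{\infty}(B+sI)^{-1}(A-B)(A+sI)^{-1}\,ds,
\]
which yields $\|\log A-\log B\|_{\ell_2}\leq\delta^{-1}\|A-B\|_{\ell_2}$ whenever both spectra lie in $[\delta,\infty)$, applicable on the probability-approaching-one event you already secured. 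With that one-line justification (or by citing the log-difference rate in Theorem \ref{thm main rate of convergence} directly, as the paper does), your argument is complete.
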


\begin{proof}
The proofs for the first two parts are the same, so we only give one for part (i). Under assumptions of this lemma, we can invoke Lemma \ref{prop mini eigenvalue}(i) in Appendix \ref{sec A4} to have eigenvalues of $\Theta$ to be bounded away from zero and from above by absolute positive constants. Let $\lambda_1,\ldots,\lambda_n$ denote these. We have already shown in the proof of Theorem \ref{prop Haihan score functions and second derivatives} in SM \ref{sec A7} that eigenvalues of $\Xi$ are
\[\left\lbrace \begin{array}{cc}
1 & \text{ if } i=j\\
1 & \text{ if } i\neq j, \lambda_i=\lambda_j\\
\frac{1}{\sbr[1]{\log \del[1]{\frac{\lambda_i}{\lambda_j}}}^2}\sbr[2]{\frac{\lambda_i}{\lambda_j}+\frac{\lambda_j}{\lambda_i}-2} & \text{ if } i\neq j, \lambda_i\neq \lambda_j
\end{array} \right. \]
for $i,j=1,\ldots,n$. This concludes the proof.

For part (iii), we have
\begin{align*}
& \enVert[3]{\int_{0}^{1}\int_{0}^{1}\hat{\Theta}_{T,D}^{t+s-1}\otimes \hat{\Theta}_{T,D}^{1-t-s}dtds-\int_0^1\int_0^1 \Theta^{t+s-1}\otimes \Theta^{1-t-s}dtds}_{\ell_2}\\
&\leq \int_{0}^{1}\int_{0}^{1}\enVert[2]{\hat{\Theta}_{T,D}^{t+s-1}\otimes \hat{\Theta}_{T,D}^{1-t-s}-\Theta^{t+s-1}\otimes \Theta^{1-t-s}}_{\ell_2}dtds\\
&=\int_{0}^{1}\int_{0}^{1}\enVert[2]{\hat{\Theta}_{T,D}^{t+s-1}\otimes \hat{\Theta}_{T,D}^{1-t-s}-\hat{\Theta}_{T,D}^{t+s-1}\otimes \Theta^{1-t-s}+\hat{\Theta}_{T,D}^{t+s-1}\otimes \Theta^{1-t-s}-\Theta^{t+s-1}\otimes \Theta^{1-t-s}}_{\ell_2}dtds\\
&=\int_{0}^{1}\int_{0}^{1}\enVert[2]{\hat{\Theta}_{T,D}^{t+s-1}\otimes (\hat{\Theta}_{T,D}^{1-t-s}- \Theta^{1-t-s})+(\hat{\Theta}_{T,D}^{t+s-1}-\Theta^{t+s-1})\otimes \Theta^{1-t-s}}_{\ell_2}dtds\\
&=\int_{0}^{1}\int_{0}^{1}\sbr[2]{\|\hat{\Theta}_{T,D}^{t+s-1}\|_{\ell_2}\|\hat{\Theta}_{T,D}^{1-t-s}- \Theta^{1-t-s}\|_{\ell_2}+\|\hat{\Theta}_{T,D}^{t+s-1}-\Theta^{t+s-1}\|_{\ell_2}\| \Theta^{1-t-s}\|_{\ell_2}}dtds\\
&\leq \max_{t,s\in [0,1]}\sbr[2]{\|\hat{\Theta}_{T,D}^{t+s-1}\|_{\ell_2}\|\hat{\Theta}_{T,D}^{1-t-s}- \Theta^{1-t-s}\|_{\ell_2}+\|\hat{\Theta}_{T,D}^{t+s-1}-\Theta^{t+s-1}\|_{\ell_2}\| \Theta^{1-t-s}\|_{\ell_2}}.
\end{align*}
First, note that for any $t,s\in [0,1]$, $\|\hat{\Theta}_{T,D}^{t+s-1}\|_{\ell_2}$ and $\|\Theta^{1-t-s}\|_{\ell_2}$ are $O_p(1)$ and $O(1)$, respectively. For example, diagonalize $\Theta$, apply the function $f(x)=x^{1-t-s}$, and take the spectral norm.

The result would then follow if we show that
\[\max_{t,s\in [0,1]}\|\hat{\Theta}_{T,D}^{1-t-s}- \Theta^{1-t-s}\|_{\ell_2}=O_p(\sqrt{n/T}),\quad \max_{t,s\in [0,1]}\|\hat{\Theta}_{T,D}^{t+s-1}-\Theta^{t+s-1}\|_{\ell_2}=O_p(\sqrt{n/T}).\]
It suffices to give a proof for the first equation, as the proof for the second is similar.
\begin{align*}
&\|\hat{\Theta}_{T,D}^{1-t-s}-\Theta^{1-t-s}\|_{\ell_2}=\enVert[1]{e^{(1-t-s)\log \hat{\Theta}_{T,D}}-e^{(1-t-s)\log \Theta}}_{\ell_2}\\
& \leq \|(1-t-s)(\log \hat{\Theta}_{T,D}-\log \Theta)\|_{\ell_2}\exp[(1-t-s)\|\log \hat{\Theta}_{T,D}-\log \Theta\|_{\ell_2}]\exp [(1-t-s)\|\log \Theta\|_{\ell_2}]\\
& = \|(1-t-s)(\log \hat{\Theta}_{T,D}-\log \Theta)\|_{\ell_2}\exp[(1-t-s)\|\log \hat{\Theta}_{T,D}-\log \Theta\|_{\ell_2}]O(1),
\end{align*}
where the first inequality is due to Lemma \ref{thm fandao bound on exponent}, and the second equality is due to the fact that all the eigenvalues of $\Theta$ are bounded away from zero and infinity by absolute positive constants. Now use Theorem \ref{thm main rate of convergence} to get $\|\log \hat{\Theta}_{T,D}-\log \Theta\|_{\ell_2}=O_p\del [1]{\sqrt{\frac{n}{T}}}$. The result follows after recognising $\exp(o_p(1))=O_p(1)$.

The proof for part (iv) is very similar to the one which we gave in the proof of Theorem \ref{prop Haihan score functions and second derivatives} in SM \ref{sec A7}. Since $\Theta= Q^{\intercal}\text{diag}(\lambda_1,\ldots,\lambda_n)Q$, we have $\Theta^t= Q^{\intercal}\text{diag}(\lambda_1^t,\ldots,\lambda_n^t)Q$ and $\Theta^{1-t}= Q^{\intercal}\text{diag}(\lambda_1^{1-t},\ldots,\lambda_n^{1-t})Q$.
Then
\begin{align*}
\Theta^t\otimes \Theta^{1-t}&=(Q\otimes Q)^{\intercal} \sbr[1]{\text{diag}(\lambda_1^t,\ldots,\lambda_n^t)\otimes \text{diag}(\lambda_1^{1-t},\ldots,\lambda_n^{1-t})}(Q\otimes Q)=: (Q\otimes Q)^{\intercal} M_3 (Q\otimes Q),
\end{align*}
where $M_3$ is an $n^2\times n^2$ diagonal matrix whose $[(i-1)n+j]$th diagonal entry is $\lambda_j\del[1]{\frac{\lambda_i}{\lambda_j}}^{t}$ for $i,j=1,\ldots,n$. Thus
\begin{align*}
\Psi=\int_0^1 \Theta^{t}\otimes \Theta^{1-t}dt=(Q\otimes Q)^{\intercal}\int_{0}^{1}M_3dt(Q\otimes Q)
\end{align*}
where $\int_{0}^{1}M_3 dt$ is an $n^2\times n^2$ diagonal matrix whose $[(i-1)n+j]$th diagonal entry is
\[\left\lbrace \begin{array}{cc}
\lambda_i & \text{if }i=j\\
\lambda_i & \text{if }i\neq j, \lambda_i=\lambda_j\\
\frac{\lambda_i-\lambda_j}{\log \lambda_i-\log \lambda_j} & \text{if }i\neq j,  \lambda_i\neq \lambda_j
\end{array}\right. \]
for $i,j=1,\ldots,n$. To see this,
\begin{align*}
&\lambda_j \int_{0}^{1}\del[2]{\frac{\lambda_i}{\lambda_j}}^{t} dt =\lambda_j\sbr[4]{\frac{\del [2]{\frac{\lambda_i}{\lambda_j}}^t}{\log \del [2]{\frac{\lambda_i}{\lambda_j}}}}_0^1 =\frac{1}{\log\del [1]{ \frac{\lambda_i}{\lambda_j}}}\lambda_j\sbr[3]{\frac{\lambda_i}{\lambda_j}-1}. 
\end{align*}
\end{proof}

\bigskip

\begin{lemma}
\label{prop Upsilon}
Suppose Assumptions \ref{assu subgaussian vector}%
(i), \ref{assu mixing}, \ref{assu n indexed by T}(i) and \ref{assu about D and Dhat} hold with $1/r_1+1/r_2>1$. Then
\begin{enumerate}[(i)]
\item
\[\|\hat{\Upsilon}_{T,D}-\Upsilon_D\|_{\ell_2}=O_p\del [3]{sn\sqrt{\frac{n}{T}}}.\]

\item
\[\|\hat{\Upsilon}_{T,D}^{-1}-\Upsilon_D^{-1}\|_{\ell_2}=O_p\del [3]{\varpi^2s\sqrt{\frac{1}{nT}}}.\]

\end{enumerate}
\end{lemma}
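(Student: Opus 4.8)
The plan is to read both $\Upsilon_D$ and $\hat\Upsilon_{T,D}$ through the common factorisation $\Upsilon_D=\tfrac12E^{\intercal}D_n^{\intercal}\Xi D_nE$, $\hat\Upsilon_{T,D}=\tfrac12E^{\intercal}D_n^{\intercal}\hat\Xi_{T,D}D_nE$, and to transfer the eigenvalue control on $\Xi,\hat\Xi_{T,D}$ from Lemma \ref{prop middle of Hessian} through the fixed matrices $E$ and $D_n$.

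For part (i) I would difference directly,
\[\hat\Upsilon_{T,D}-\Upsilon_D=\tfrac12E^{\intercal}D_n^{\intercal}(\hat\Xi_{T,D}-\Xi)D_nE,\]
and bound by submultiplicativity of $\|\cdot\|_{\ell_2}$:
\[\|\hat\Upsilon_{T,D}-\Upsilon_D\|_{\ell_2}\leq\tfrac12\|E\|_{\ell_2}^2\|D_n\|_{\ell_2}^2\|\hat\Xi_{T,D}-\Xi\|_{\ell_2}.\]
Feeding in $\|E\|_{\ell_2}^2=\text{maxeval}(E^{\intercal}E)=O(sn)$ from (\ref{eqn maxeval EE}), $\|D_n\|_{\ell_2}=O(1)$ from (\ref{eqn spectral norm for Dnplus and Dn}), and $\|\hat\Xi_{T,D}-\Xi\|_{\ell_2}=O_p(\sqrt{n/T})$ from Lemma \ref{prop middle of Hessian}(iii) gives exactly $O_p(sn\sqrt{n/T})$.

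For part (ii) I would use the identity $\hat\Upsilon_{T,D}^{-1}-\Upsilon_D^{-1}=-\hat\Upsilon_{T,D}^{-1}(\hat\Upsilon_{T,D}-\Upsilon_D)\Upsilon_D^{-1}$, so that
\[\|\hat\Upsilon_{T,D}^{-1}-\Upsilon_D^{-1}\|_{\ell_2}\leq\|\hat\Upsilon_{T,D}^{-1}\|_{\ell_2}\,\|\hat\Upsilon_{T,D}-\Upsilon_D\|_{\ell_2}\,\|\Upsilon_D^{-1}\|_{\ell_2},\]
where the middle factor is part (i). To bound the two outer factors simultaneously I would exploit that $\Upsilon_D$ and $\hat\Upsilon_{T,D}$ are the same sandwich $\tfrac12E^{\intercal}D_n^{\intercal}(\cdot)D_nE$ with central factor pinched between absolute constants: Lemma \ref{prop middle of Hessian}(i) gives $c_1I\preceq\Xi\preceq C_1I$ and Lemma \ref{prop middle of Hessian}(ii) gives $c_0I\preceq\hat\Xi_{T,D}\preceq C_0I$ with probability approaching one. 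Writing $S:=E^{\intercal}D_n^{\intercal}D_nE$ and using $D_n^{\intercal}D_n\succeq I$, this gives $\Upsilon_D\succeq\tfrac{c_1}{2}S$ and $\hat\Upsilon_{T,D}\succeq\tfrac{c_0}{2}S$ (w.p.a.\ one), while $\text{mineval}(S)\geq\text{mineval}(E^{\intercal}E)\geq cn/\varpi$ by the definition of $\varpi$. Hence $\text{mineval}(\Upsilon_D)$ and $\text{mineval}(\hat\Upsilon_{T,D})$ are both of order at least $n/\varpi$ (the latter w.p.a.\ one), so $\|\Upsilon_D^{-1}\|_{\ell_2}=O(\varpi/n)$ and $\|\hat\Upsilon_{T,D}^{-1}\|_{\ell_2}=O_p(\varpi/n)$. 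Substituting the three factors,
\[\|\hat\Upsilon_{T,D}^{-1}-\Upsilon_D^{-1}\|_{\ell_2}=O_p\del[3]{\frac{\varpi}{n}}\,O_p\del[3]{sn\sqrt{\frac{n}{T}}}\,O\del[3]{\frac{\varpi}{n}}=O_p\del[3]{\varpi^2s\sqrt{\frac{1}{nT}}}.\]

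The mechanical inequalities are routine; the one point that needs care is the lower bound $\text{mineval}(\hat\Upsilon_{T,D})\gtrsim n/\varpi$ holding with probability approaching one, since it is this that keeps $\|\hat\Upsilon_{T,D}^{-1}\|_{\ell_2}=O_p(\varpi/n)$ rather than blowing up. Routing it through the common sandwich $S$, so that the random part enters only via the absolute lower bound on the eigenvalues of $\hat\Xi_{T,D}$ and never through $\text{mineval}(S)$, is cleaner than perturbing $\text{mineval}(\Upsilon_D)$ by part (i), which would additionally demand $s\varpi\sqrt{n/T}=o(1)$. I would also verify at the end that the $\text{maxeval}(E^{\intercal}E)=O(sn)$ used in (i) and the $\text{mineval}(E^{\intercal}E)\gtrsim n/\varpi$ used in (ii) are precisely the bounds maintained in the main paper, as these are the only substantive inputs beyond Lemma \ref{prop middle of Hessian}.
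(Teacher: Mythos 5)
Your proposal is correct and follows essentially the same route as the paper: part (i) is the identical submultiplicativity bound $\tfrac12\|E\|_{\ell_2}^2\|D_n\|_{\ell_2}^2\|\hat\Xi_{T,D}-\Xi\|_{\ell_2}$, and part (ii) uses the same resolvent identity together with $\|\hat\Upsilon_{T,D}^{-1}\|_{\ell_2}=O_p(\varpi/n)$, $\|\Upsilon_D^{-1}\|_{\ell_2}=O(\varpi/n)$, which the paper establishes (as its equation (\ref{eqn mineval minus Upsilon})) by exactly your sandwich argument, i.e.\ $\mathrm{mineval}(\hat\Upsilon_{T,D})\geq\mathrm{mineval}(\hat\Xi_{T,D})\,\mathrm{mineval}(D_n^{\intercal}D_n)\,\mathrm{mineval}(E^{\intercal}E)/2\gtrsim n/\varpi$ using Lemma \ref{prop middle of Hessian}(i)--(ii). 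Your closing caution about keeping the randomness confined to the eigenvalue bound on $\hat\Xi_{T,D}$ is precisely how the paper handles it, so there is no substantive difference.
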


\begin{proof}
For part (i),
\begin{align*}
& \|\hat{\Upsilon}_{T,D}-\Upsilon_D\|_{\ell_2}=\frac{1}{2}\|E^{\intercal}D_{n}^{\intercal}(\hat{\Xi}_{T,D}-\Xi)D_{n}E\|_{\ell_2}\leq \frac{1}{2}\|E^{\intercal}\|_{\ell_2}\|D_{n}^{\intercal}\|_{\ell_2}\|\hat{\Xi}_{T,D}-\Xi\|_{\ell_2}\|D_{n}\|_{\ell_2}\|E\|_{\ell_2}\\
&=O(1) \|\hat{\Xi}_{T,D}-\Xi\|_{\ell_2}\|E\|^2_{\ell_2}=O_p\del [3]{sn\sqrt{\frac{n}{T}}},
\end{align*}
where the second equality is due to (\ref{eqn spectral norm for Dnplus and Dn}), and the last equality is due to (\ref{eqn E l2 rate}) and Lemma \ref{prop middle of Hessian}(iii).

For part (ii),
\begin{align*}
&\|\hat{\Upsilon}_{T,D}^{-1}-\Upsilon_D^{-1}\|_{\ell_2}=\|\hat{\Upsilon}_{T,D}^{-1}(\Upsilon_D-\hat{\Upsilon}_{T,D})\Upsilon_D^{-1}\|_{\ell_2}\leq \|\hat{\Upsilon}_{T,D}^{-1}\|_{\ell_2}\|\Upsilon_D-\hat{\Upsilon}_{T,D}\|_{\ell_2}\|\Upsilon_D^{-1}\|_{\ell_2}\\
&=O_p(\varpi^2/n^2)O_p\del [3]{sn\sqrt{\frac{n}{T}}}=O_p\del [3]{s\varpi^2\sqrt{\frac{1}{nT}}},
\end{align*}
where the second last equality is due to (\ref{eqn mineval minus Upsilon}).
\end{proof}

\bigskip

We are now ready to give a proof for Theorem \ref{thm one step estimator asymptotic normality}.

\begin{proof}[Proof of Theorem \ref{thm one step estimator asymptotic normality}]
We first show that $\hat{\Upsilon}_{T,D}$ is invertible with probability approaching 1, so that our estimator $\tilde{\theta}_{T,D}:=\hat{\theta}_{T,D}-\hat{\Upsilon}_{T,D}^{-1}\frac{\partial\ell_{T,D}(\hat{\theta}_{T,D},\bar{y})}{\partial\theta^{\intercal}}/T$ is well defined. It suffices to show that $\hat{\Upsilon}_{T,D}$ has minimum eigenvalue bounded away from zero by an absolute constant with probability approaching one. 
\begin{align*}
& \text{mineval}(\hat{\Upsilon}_{T,D})=\frac{1}{2}\text{mineval}( E^{\intercal}D_{n}^{\intercal}\hat{\Xi}_{T,D}D_{n}E) \geq \text{mineval}(\hat{\Xi}_{T,D})\text{mineval}(D_n^{\intercal}D_n)\text{mineval}(E^{\intercal}E)/2\\
&\geq C\frac{n}{\varpi},
\end{align*}
for some absolute positive constant $C$ with probability approaching one, where the second inequality is due to Lemma \ref{prop middle of Hessian}(ii), Assumption \ref{assu about D and Dhat}(ii), and that $D_n^{\intercal}D_n$ is a diagonal matrix with diagonal entries either 1 or 2. Hence $\hat{\Upsilon}_{T,D}$ has minimum eigenvalue bounded away from zero by an absolute constant with probability approaching one. Also as a by-product
\begin{equation}
\label{eqn mineval minus Upsilon}
\|\hat{\Upsilon}_{T,D}^{-1}\|_{\ell_2}=\frac{1}{\text{mineval}(\hat{\Upsilon}_{T,D})}=O_p\del [3]{\frac{\varpi}{n}}\qquad \|\Upsilon_{D}^{-1}\|_{\ell_2}=\frac{1}{\text{mineval}(\Upsilon_{D})}=O\del [3]{\frac{\varpi}{n}}.
\end{equation}
From the definition of $\tilde{\theta}_{T,D}$, for any $b\in \mathbb{R}^{s}$ with $\|b\|_2=1$ we can write
\begin{align*}
&\sqrt{T}b^{\intercal}\hat{\Upsilon}_{T,D}(\tilde{\theta}_{T,D}-\theta)=\sqrt{T}b^{\intercal}\hat{\Upsilon}_{T,D}(\hat{\theta}_{T,D}-\theta)-\sqrt{T}b^{\intercal}\frac{1}{T}\frac{\partial \ell_{T,D}(\hat{\theta}_{T,D},\bar{y})}{\partial \theta^{\intercal}}\\
&=\sqrt{T}b^{\intercal}\hat{\Upsilon}_{T,D}(\hat{\theta}_{T,D}-\theta)-\sqrt{T}b^{\intercal}\frac{1}{T}\frac{\partial \ell_{T,D}(\theta,\bar{y})}{\partial \theta^{\intercal}}-\sqrt{T}b^{\intercal}\Upsilon_D(\hat{\theta}_{T,D}-\theta)+o_p(1)\\
&=\sqrt{T}b^{\intercal}(\hat{\Upsilon}_{T,D}-\Upsilon_D)(\hat{\theta}_{T,D}-\theta)-b^{\intercal}\sqrt{T}\frac{1}{T}\frac{\partial \ell_{T,D}(\theta,\bar{y})}{\partial \theta^{\intercal}}+o_p(1)
\end{align*}
where the second equality is due to Assumption \ref{assu uniform unverifiable condition} and the fact that $\hat{\theta}_{T,D}$ is $\sqrt{n\varpi\kappa(W)/T}$-consistent. Defining $a^{\intercal}:=b^{\intercal}\hat{\Upsilon}_{T,D}$, we write
\begin{align*}
\sqrt{T}\frac{a^{\intercal}}{\|a\|_2}(\tilde{\theta}_{T,D}-\theta)&=\sqrt{T}\frac{a^{\intercal}}{\|a\|_2}\hat{\Upsilon}_{T,D}^{-1}(\hat{\Upsilon}_{T,D}-\Upsilon_D)(\hat{\theta}_{T,D}-\theta) -\frac{a^{\intercal}}{\|a\|_2}\hat{\Upsilon}_{T,D}^{-1}\sqrt{T}\frac{1}{T}\frac{\partial \ell_{T,D}(\theta,\bar{y})}{\partial \theta^{\intercal}}+\frac{o_p(1)}{\|a\|_2}.
\end{align*}
By recognising that $\|a^{\intercal}\|_2=\|b^{\intercal}\hat{\Upsilon}_{T,D}\|_2\geq \text{mineval}(\hat{\Upsilon}_{T,D})$, we have $\frac{1}{\|a\|_2}=O_p\del [1]{\frac{\varpi}{n}}$. Thus without loss of generality, we have, for any $c\in \mathbb{R}^{s}$ with $\|c\|_2=1$,
\[\sqrt{T}c^{\intercal}(\tilde{\theta}_{T,D}-\theta)=\sqrt{T}c^{\intercal}\hat{\Upsilon}_{T,D}^{-1}(\hat{\Upsilon}_{T,D}-\Upsilon_D)(\hat{\theta}_{T,D}-\theta)-c^{\intercal}\hat{\Upsilon}_{T,D}^{-1}\sqrt{T}\frac{1}{T}\frac{\partial \ell_{T,D}(\theta,\bar{y})}{\partial \theta^{\intercal}}+o_p(\varpi/n).\]
We now determine a rate for the first term on the right side in the preceding display. This is straightforward
\begin{align*}
&\sqrt{T}|c^{\intercal}\hat{\Upsilon}_{T,D}^{-1}(\hat{\Upsilon}_{T,D}-\Upsilon_D)(\hat{\theta}_{T,D}-\theta)|\leq \sqrt{T}\|c\|_2\|\hat{\Upsilon}_{T,D}^{-1}\|_{\ell_2}\|\hat{\Upsilon}_{T,D}-\Upsilon_D\|_{\ell_2}\|\hat{\theta}_{T,D}-\theta\|_2\\
& =\sqrt{T}O_p(\varpi/n)sn O_p(\sqrt{n/T})O_p(\sqrt{n\varpi \kappa(W)/T})=O_p\del[3]{\sqrt{\frac{ n^2 \log ^2 n \varpi^3\kappa(W)}{T}}},
\end{align*}
where the first equality is due to (\ref{eqn mineval minus Upsilon}), Lemma \ref{prop Upsilon}(i) and the rate of convergence for the minimum distance estimator $\hat{\theta}_{T}$ ($\hat{\theta}_{T,D}$). Thus
\begin{align*}
\sqrt{T}c^{\intercal}(\tilde{\theta}_{T,D}-\theta)=-c^{\intercal}\hat{\Upsilon}_{T,D}^{-1}\sqrt{T}\frac{1}{T}\frac{\partial\ell_{T,D}(\theta,\bar{y})}{\partial\theta^{\intercal}}+\text{rem},\quad \text{rem}=O_p\del[3]{\sqrt{\frac{ n^2 \log ^2 n \varpi^3\kappa(W)}{T}}}+o_p(\varpi/n)
\end{align*}
whence, if we divide by $\sqrt{c^{\intercal}\hat{\Upsilon}_{T,D}^{-1}c}$, we have
\begin{align*}
\frac{\sqrt{T}c^{\intercal}(\tilde{\theta}_{T,D}-\theta)}{\sqrt{c^{\intercal}\hat{\Upsilon}_{T,D}^{-1}c}}&=\frac{-c^{\intercal}\hat{\Upsilon}_{T,D}^{-1}\sqrt{T}\frac{\partial \ell_{T,D}(\theta,\bar{y})}{\partial \theta^{\intercal}}/T}{\sqrt{c^{\intercal}\hat{\Upsilon}_{T,D}^{-1}c}}+\frac{\text{rem}}{\sqrt{c^{\intercal}\hat{\Upsilon}_{T,D}^{-1}c}}=:\hat{t}_{os,D,1}+t_{os,D,2}.
\end{align*}
Define
\[t_{os,D,1}:=\frac{-c^{\intercal}\Upsilon_D^{-1}\sqrt{T}\frac{\partial \ell_{T,D}(\theta,\mu)}{\partial \theta^{\intercal}}/T}{\sqrt{c^{\intercal}\Upsilon_D^{-1} c}}.\]
To prove Theorem \ref{thm one step estimator asymptotic normality}, it suffices to show $t_{os,D,1}\xrightarrow{d}N(0,1)$, $\hat{t}_{os,D,1}-t_{os,D,1}=o_p(1)$, and $t_{os,D,2}=o_p(1)$.

\subsubsection{$t_{os,D,1}\xrightarrow{d}N(0,1)$}

We now prove that $t_{os,D,1}$ is asymptotically distributed as a standard normal. 
Write
\begin{align*}
& t_{os,D,1}:=\frac{-c^{\intercal}\Upsilon_D^{-1}\sqrt{T}\frac{\partial \ell_{T,D}(\theta,\mu)}{\partial \theta^{\intercal}}/T}{\sqrt{c^{\intercal}\Upsilon_D^{-1} c}}=\\
&\sum_{t=1}^{T}\frac{-\frac{1}{2}c^{\intercal}\Upsilon_D^{-1} E^{\intercal}D_n^{\intercal}\Psi(\Theta^{-1}\otimes \Theta^{-1} )(D^{-1/2}\otimes D^{-1/2})T^{-1/2}\ve \sbr[1]{(y_t-\mu)(y_t-\mu)^{\intercal}-\mathbb{E}(y_t-\mu)(y_t-\mu)^{\intercal}}}{\sqrt{c^{\intercal}\Upsilon_D^{-1} c}}\\
&=:\sum_{t=1}^{T}U_{os,D,T,n,t}.
\end{align*}
The proof is very similar to that of $t_{D,1}\xrightarrow{d}N(0,1)$ in Section \ref{sec asymptotic normality of t1 prime}. It is straightforward to show that $\{U_{os,D,T,n,t}, \mathcal{F}_{T,n,t}\}$ is a martingale difference sequence. We first investigate that at what rate the denominator $\sqrt{c^{\intercal}\Upsilon_D^{-1}c}$ goes to zero.
\begin{align*}
& c^{\intercal}\Upsilon_D^{-1}c=2c^{\intercal}\del [1]{E^{\intercal}D_{n}^{\intercal}\Xi D_{n}E}^{-1}c\geq 2 \text{mineval}\del[2]{\del [1]{E^{\intercal}D_{n}^{\intercal}\Xi D_{n}E}^{-1}} =\frac{2}{\text{maxeval}\del [1]{E^{\intercal}D_{n}^{\intercal}\Xi D_{n}E}}.
\end{align*}
Since,
\begin{align*}
& \text{maxeval}\del [1]{E^{\intercal}D_{n}^{\intercal}\Xi D_{n}E}\leq \text{maxeval}(\Xi) \text{maxeval}(D_n^{\intercal}D_n)\text{maxeval}(E^{\intercal}E)\leq Csn,
\end{align*}
for some positive constant $C$ because of Lemma \ref{prop middle of Hessian}(i), (\ref{eqn maxeval EE}) and that $D_n^{\intercal}D_n$ is a diagonal matrix with diagonal entries either 1 or 2. Thus we have
\begin{equation}
\label{eqn denominator Upsilon}
\frac{1}{\sqrt{c^{\intercal}\Upsilon_D^{-1}c}}=O(\sqrt{sn}).
\end{equation}

We now verify (i) and (ii) of Theorem \ref{thm mcleish clt} in Appendix \ref{sec oldappendixB}. We consider $|U_{os,D,T,n,t}|$ first.
\begin{align*}
&|U_{os,D,T,n,t}|=\\
&\envert[3]{\frac{\frac{1}{2}c^{\intercal}\Upsilon_D^{-1} E^{\intercal}D_n^{\intercal}\Psi(\Theta^{-1}\otimes \Theta^{-1} )(D^{-1/2}\otimes D^{-1/2})T^{-1/2}\ve \sbr[1]{(y_t-\mu)(y_t-\mu)^{\intercal}-\mathbb{E}(y_t-\mu)(y_t-\mu)^{\intercal}}}{\sqrt{c^{\intercal}\Upsilon_D^{-1} c}}}\\
&\leq \frac{\frac{1}{2}T^{-1/2}\enVert[1]{c^{\intercal}\Upsilon_D^{-1} E^{\intercal}D_n^{\intercal}\Psi(\Theta^{-1}\otimes \Theta^{-1} )(D^{-1/2}\otimes D^{-1/2})}_2\enVert[1]{\ve \sbr[1]{(y_t-\mu)(y_t-\mu)^{\intercal}-\mathbb{E}(y_t-\mu)(y_t-\mu)^{\intercal}}}_2}{\sqrt{c^{\intercal}\Upsilon_D^{-1} c}}\\
&= O \del[3]{\sqrt{\frac{s^2\varpi^2}{T}}}\enVert[1]{(y_t-\mu)(y_t-\mu)^{\intercal}-\mathbb{E}(y_t-\mu)(y_t-\mu)^{\intercal}}_F\\
&\leq  O \del[3]{\sqrt{\frac{n^2s^2\varpi^2}{T}}}\enVert[1]{(y_t-\mu)(y_t-\mu)^{\intercal}-\mathbb{E}(y_t-\mu)(y_t-\mu)^{\intercal}}_{\infty},
\end{align*}
where the second equality is due to (\ref{eqn denominator Upsilon}) and that
\begin{align*}
&\enVert[1]{c^{\intercal}\Upsilon_D^{-1} E^{\intercal}D_n^{\intercal}\Psi(\Theta^{-1}\otimes \Theta^{-1} )(D^{-1/2}\otimes D^{-1/2})}_2\\
&\leq \|\Upsilon_D^{-1}\|_{\ell_2} \|E^{\intercal}\|_{\ell_2}\|D_n^{\intercal}\|_{\ell_2}\|\Psi\|_{\ell_2}\|\Theta^{-1}\otimes \Theta^{-1} \|_{\ell_2}\|D^{-1/2}\otimes D^{-1/2}\|_{\ell_2}=O\del[3]{\frac{\varpi}{n}}\sqrt{sn}=O\del[3]{\sqrt{\frac{s\varpi^2}{n}}}
\end{align*}
via (\ref{eqn mineval minus Upsilon}) and (\ref{eqn E l2 rate}). Next, using a similar argument which we explained in detail in Section \ref{sec asymptotic normality of t1 prime}, we have
\begin{align*}
&\enVert[2]{\max_{1\leq t\leq T}|U_{os,D,T,n,t}|}_{\psi_1}\leq \log (1+T)\max_{1\leq t\leq T}\enVert[1]{U_{os,D,T,n,t}}_{\psi_1}\\
&=\log (1+T)O \del[3]{\sqrt{\frac{n^2s^2\varpi^2}{T}}}\max_{1\leq t\leq T}\enVert[2]{\enVert[1]{(y_t-\mu)(y_t-\mu)^{\intercal}-\mathbb{E}(y_t-\mu)(y_t-\mu)^{\intercal}}_{\infty}}_{\psi_1}\\
&=\log (1+T)\log (1+n^2)O \del[3]{\sqrt{\frac{n^2s^2\varpi^2}{T}}}\max_{1\leq t\leq T}\max_{1\leq i,j\leq n}\enVert[1]{(y_{t,i}-\mu_i)(y_{t,j}-\mu_j)}_{\psi_1}\\
&=O \del[3]{\sqrt{\frac{n^2s^2\varpi^2\log^2 (1+T)\log^2 (1+n^2)}{T}}}=o(1)
\end{align*}
where the last equality is due to Assumption \ref{assu n indexed by T}(iii). Since $\|U\|_{L_r}\leq r!\|U\|_{\psi_1}$ for any random variable $U$ (\cite{vandervaartWellner1996}, p95), we conclude that (i) and (ii) of Theorem \ref{thm mcleish clt} in Appendix \ref{sec oldappendixB} are satisfied.  

We now verify condition (iii) of Theorem \ref{thm mcleish clt} in Appendix \ref{sec oldappendixB}. Since we have already shown that $sn c^{\intercal}\Upsilon_D^{-1}c$ is bounded away from zero by an absolute constant, it suffices to show
\[sn \envert[3]{\frac{1}{T}\sum_{t=1}^{T}\del[3]{\frac{1}{2}c^{\intercal}\Upsilon_D^{-1} E^{\intercal}D_n^{\intercal}\Psi(\Theta^{-1}\otimes \Theta^{-1} )(D^{-1/2}\otimes D^{-1/2})u_t}^2-c^{\intercal}\Upsilon_D^{-1}c}=o_p(1),\]
where $u_t:=\ve \sbr[1]{ (y_t-\mu)(y_t-\mu)^{\intercal}-\mathbb{E}(y_t-\mu)(y_t-\mu)^{\intercal}}$. Under Assumptions \ref{assu subgaussian vector}(ii) and \ref{assu mds}, we have already shown in the proof of part (iv) of Theorem \ref{prop Haihan score functions and second derivatives} that
\begin{align*}
& c^{\intercal}\Upsilon_D^{-1}c=c^{\intercal}\Upsilon_D^{-1}\Upsilon_D\Upsilon_D^{-1}c=c^{\intercal}\Upsilon_D^{-1}\del[3]{\frac{1}{2}E^{\intercal}D_n^{\intercal}\Psi(\Theta^{-1}\otimes \Theta^{-1})\Psi D_n E}\Upsilon_D^{-1}c\\
&= \frac{1}{4}c^{\intercal}\Upsilon_D^{-1}E^{\intercal}D_n^{\intercal}\Psi(\Theta^{-1}\otimes \Theta^{-1})(D^{-1/2}\otimes D^{-1/2})V(D^{-1/2}\otimes D^{-1/2})(\Theta^{-1}\otimes \Theta^{-1})\Psi D_n E\Upsilon_D^{-1}c.
\end{align*}
Thus
\begin{align*}
& sn \envert[3]{\frac{1}{T}\sum_{t=1}^{T}\del[3]{\frac{1}{2}c^{\intercal}\Upsilon_D^{-1} E^{\intercal}D_n^{\intercal}\Psi(\Theta^{-1}\otimes \Theta^{-1} )(D^{-1/2}\otimes D^{-1/2})u_t}^2-c^{\intercal}\Upsilon_D^{-1}c}\\
&\leq \frac{1}{4}sn \enVert[3]{\frac{1}{T}\sum_{t=1}^{T}u_tu_t^{\intercal}-V}_{\infty}\enVert[1]{(D^{-1/2}\otimes D^{-1/2})(\Theta^{-1}\otimes \Theta^{-1})\Psi D_n E\Upsilon_D^{-1}c}_1^2\\
&\leq\frac{1}{4} sn^3 \enVert[3]{\frac{1}{T}\sum_{t=1}^{T}u_tu_t^{\intercal}-V}_{\infty}\enVert[1]{(D^{-1/2}\otimes D^{-1/2})(\Theta^{-1}\otimes \Theta^{-1})\Psi D_n E\Upsilon_D^{-1}c}_2^2\\
&\leq \frac{1}{4}sn^3 \enVert[3]{\frac{1}{T}\sum_{t=1}^{T}u_tu_t^{\intercal}-V}_{\infty}\|D^{-1/2}\otimes D^{-1/2}\|_{\ell_2}^2\|\Theta^{-1}\otimes \Theta^{-1}\|_{\ell_2}^2\|\Psi\|_{\ell_2}^2\|D_n\|_{\ell_2}^2\| E\|_{\ell_2}^2\|\Upsilon_D^{-1}\|_{\ell_2}^2\\
&= O_p(sn^3) \sqrt{\frac{\log n}{T}}\cdot sn \cdot \frac{\varpi^2}{n^2}=O_p\del[3]{\sqrt{\frac{n^4\cdot\log n\cdot \varpi^4\cdot \log^4 n}{T}}}=o_p(1)
\end{align*}
where the first equality is due to (\ref{eqn mineval minus Upsilon}), (\ref{eqn E l2 rate}) and the fact that $\enVert[1]{T^{-1}\sum_{t=1}^{T}u_tu_t^{\intercal}-V}_{\infty}=O_p(\sqrt{\frac{\log n}{T}})$, which can be deduced from the proof of Lemma \ref{lemma rate for hatV-V infty} in SM \ref{sec A5}, and the last equality is due to Assumption \ref{assu n indexed by T}(iii).

\subsubsection{$\hat{t}_{os,D,1}-t_{os,D,1}=o_p(1)$}

We now show that $\hat{t}_{os,D,1}-t_{os,D,1}=o_p(1)$. Let $A_{os,D}$ and $\hat{A}_{os,D}$ denote the numerators of $t_{os,D,1}$ and $\hat{t}_{os,D,1}$, respectively.
\[\hat{t}_{os,D,1}-t_{os,D,1}=\frac{\hat{A}_{os,D}}{\sqrt{c^{\intercal}\hat{\Upsilon}_{T,D}^{-1}c}}-\frac{A_{os,D}}{\sqrt{c^{\intercal}\Upsilon_{D}^{-1}c}}=\frac{\sqrt{sn}\hat{A}_{os,D}}{\sqrt{snc^{\intercal}\hat{\Upsilon}_{T,D}^{-1}c}}-\frac{\sqrt{sn}A_{os,D}}{\sqrt{snc^{\intercal}\Upsilon_{D}^{-1}c}}\]
Since we have already shown in (\ref{eqn denominator Upsilon}) that $snc^{\intercal}\Upsilon_{D}^{-1}c$ is bounded away from zero by an absolute constant, it suffices to show the denominators as well as numerators of $\hat{t}_{os,D,1}$ and $t_{os,D,1}$ are asymptotically equivalent.

\subsubsection{Denominators of $\hat{t}_{os,D,1}$ and $t_{os,D,1}$}

We need to show
\[sn|c^{\intercal}(\hat{\Upsilon}^{-1}_{T,D}-\Upsilon^{-1}_{D})c|=o_p(1).\]
This is straightforward.
\begin{align*}
& sn|c^{\intercal}(\hat{\Upsilon}^{-1}_{T,D}-\Upsilon^{-1}_{D})c|\leq sn\|\hat{\Upsilon}^{-1}_{T,D}-\Upsilon^{-1}_{D})\|_{\ell_2}=snO_p\del [3]{s\varpi^2\sqrt{\frac{1}{nT}}}=O_p\del [3]{s^2\varpi^2\sqrt{\frac{n}{T}}}=o_p(1),
\end{align*}
where the last equality is due to Assumption \ref{assu n indexed by T}(iii).

\subsubsection{Numerators of $\hat{t}_{os,D,1}$ and $t_{os,D,1}$}

We now show
\[\sqrt{sn}\envert[3]{c^{\intercal}\hat{\Upsilon}_{T,D}^{-1}\sqrt{T}\frac{\partial \ell_{T,D}(\theta,\bar{y})}{\partial \theta^{\intercal}}/T-c^{\intercal}\Upsilon_D^{-1}\sqrt{T}\frac{\partial \ell_{T,D}(\theta,\mu)}{\partial \theta^{\intercal}}/T}=o_p(1).\]
Using triangular inequality, we have
\begin{align}
& \sqrt{sn}\envert[3]{c^{\intercal}\hat{\Upsilon}_{T,D}^{-1}\sqrt{T}\frac{\partial \ell_{T,D}(\theta,\bar{y})}{\partial \theta^{\intercal}}/T-c^{\intercal}\Upsilon_D^{-1}\sqrt{T}\frac{\partial \ell_{T,D}(\theta,\mu)}{\partial \theta^{\intercal}}/T}\notag \\
&\leq \sqrt{sn}\envert[3]{c^{\intercal}\hat{\Upsilon}_{T,D}^{-1}\sqrt{T}\frac{\partial \ell_{T,D}(\theta,\bar{y})}{\partial \theta^{\intercal}}/T-c^{\intercal}\Upsilon_D^{-1}\sqrt{T}\frac{\partial \ell_{T,D}(\theta,\bar{y})}{\partial \theta^{\intercal}}/T}\notag\\ &\qquad+\sqrt{sn}\envert[3]{c^{\intercal}\Upsilon_{D}^{-1}\sqrt{T}\frac{\partial \ell_{T,D}(\theta,\bar{y})}{\partial \theta^{\intercal}}/T-c^{\intercal}\Upsilon_D^{-1}\sqrt{T}\frac{\partial \ell_{T,D}(\theta,\mu)}{\partial \theta^{\intercal}}/T}\label{align one step numerator D}
\end{align}

We first show that the first term of (\ref{align one step numerator D}) is $o_p(1)$.
\begin{align*}
&\sqrt{sn}\envert[3]{c^{\intercal}(\hat{\Upsilon}_{T,D}^{-1}-\Upsilon_D^{-1})\sqrt{T}\frac{\partial \ell_{T,D}(\theta,\bar{y})}{\partial \theta^{\intercal}}/T}\\
&= \sqrt{sn}\envert[3]{c^{\intercal}(\hat{\Upsilon}_{T,D}^{-1}-\Upsilon_D^{-1})\sqrt{T}\frac{1}{2}E^{\intercal}D_n^{\intercal}\Psi(\Theta^{-1}\otimes \Theta^{-1})(D^{-1/2}\otimes D^{-1/2})\ve (\hat{\Sigma}_T-\Sigma) }\\
&=O(\sqrt{sn})\|\hat{\Upsilon}_{T,D}^{-1}-\Upsilon_D^{-1}\|_{\ell_2}\sqrt{T}\|E^{\intercal}\|_{\ell_2}\|\hat{\Sigma}_T-\Sigma \|_F=O(\sqrt{sn})\varpi^2 s \sqrt{1/(nT)}\sqrt{T}\sqrt{sn}\sqrt{n}\|\hat{\Sigma}_T-\Sigma \|_{\ell_2}\\
&=O(\sqrt{sn})\varpi^2 s \sqrt{1/(nT)}\sqrt{T}\sqrt{sn}\sqrt{n}\sqrt{n/T} =O_p \del [3]{\sqrt{\frac{n^3 s^4 \varpi^4}{T}}}=o_p(1),
\end{align*}
where the last equality is due to Assumption \ref{assu n indexed by T}(iii).

We now show that the second term of (\ref{align one step numerator D}) is $o_p(1)$.
\begin{align*}
&\sqrt{sn}\envert[3]{c^{\intercal}\Upsilon_{D}^{-1}\sqrt{T}\del[3]{\frac{\partial \ell_{T,D}(\theta,\bar{y})}{\partial \theta^{\intercal}}/T-\frac{\partial \ell_{T,D}(\theta,\mu)}{\partial \theta^{\intercal}}/T}}\\
&= \sqrt{sn}\envert[3]{c^{\intercal}\Upsilon_D^{-1}\sqrt{T}\frac{1}{2}E^{\intercal}D_n^{\intercal}\Psi(\Theta^{-1}\otimes \Theta^{-1})(D^{-1/2}\otimes D^{-1/2})\ve (\hat{\Sigma}_T-\tilde{\Sigma}_T) }\\
&=O(\sqrt{sn})\|\Upsilon_D^{-1}\|_{\ell_2}\sqrt{T}\|E\|_{\ell_2}\|\hat{\Sigma}_T-\tilde{\Sigma}_T\|_F=O_p(\sqrt{sn})\frac{\varpi}{n}\sqrt{T}\sqrt{sn}n \frac{\log n}{T}= O_p\del [3]{\sqrt{\frac{\log^4 n \cdot n^2 \varpi^2 }{T}}}=o_p(1),
\end{align*}
where the third last equality is due to (\ref{eqn rate for hatV-V infinity part D}), and the last equality is due to Assumption \ref{assu n indexed by T}(iii).

\subsubsection{$t_{os,D,2}=o_p(1)$}

To prove $t_{os,D,2}=o_p(1)$, it suffices to show that $\sqrt{sn}|\text{rem}|=o_p(1)$. This is delivered by Assumption \ref{assu n indexed by T}(iii).
\end{proof}

\subsection{Proof of Theorem \ref{thm overidentification test fixed dim} and Corollary \ref{coro diagonal asymptotics}}
\label{sec A9}

In this subsection, we give proofs of Theorem \ref{thm overidentification test fixed dim} and Corollary \ref{coro diagonal asymptotics}.

\begin{proof}[Proof of Theorem \ref{thm overidentification test fixed dim}]
We only give a proof for part (i), as that for part (ii) is similar.  Note that under $H_{0}$,
\begin{align*}
\sqrt{T}g_{T,D}(\theta)  &  =\sqrt{T}[\vech (\log \hat{\Theta}_{T,D})-E\theta]=\sqrt{T}[\vech (\log \hat{\Theta}_{T,D})-\vech (\log\Theta)]\\
& =\sqrt{T}D_{n}^{+}\ve (\log \hat{\Theta}_{T,D}-\log\Theta).
\end{align*}
Thus we can adopt the same method as in Theorem \ref{thm asymptotic normality} to establish the asymptotic distribution of $\sqrt{T}g_{T,D}(\theta)$. In fact, it will be much simpler here because we fixed $n$. We should have
\begin{equation}
\label{eqn gT function clt}
\sqrt{T}g_{T,D}(\theta)\xrightarrow{d}N(0,
S),\qquad S:=D_{n}^{+}H(D^{-1/2}\otimes D^{-1/2})V(D^{-1/2}\otimes D^{-1/2})HD_{n}^{+\intercal},
\end{equation}
where $S$ is positive definite given the assumptions of this theorem. The closed-form solution for $\hat{\theta}_T=\hat{\theta}_{T,D}$ has been given in (\ref{eqn thetaTW closed form solution}), but this is not important. We only need that $\hat{\theta}_{T,D}$ sets the first derivative of the objective function to zero:
\begin{equation}
\label{eqn chi first order condition}
E^{\intercal}Wg_{T,D}(\hat{\theta}_{T,D})=0.
\end{equation}
Notice that
\begin{equation}
\label{eqn chi linearisation}
g_{T,D}(\hat{\theta}_{T,D})-g_{T,D}(\theta)=-E(\hat{\theta}_{T,D}-\theta).
\end{equation}
Pre-multiply (\ref{eqn chi linearisation}) by $\frac{\partial g_{T,D}(\hat{\theta}_{T,D})}{\partial \theta^{\intercal}}W=-E^{\intercal}W$ to give
\[-E^{\intercal}W[g_{T,D}(\hat{\theta}_{T,D})-g_{T,D}(\theta)]=E^{\intercal}WE(\hat{\theta}_{T,D}-\theta),\]
whence we obtain
\begin{equation}
\label{eqn chi make subject}
\hat{\theta}_{T,D}-\theta=-(E^{\intercal}WE)^{-1}E^{\intercal}W[g_{T,D}(\hat{\theta}_{T,D})-g_{T,D}(\theta)].
\end{equation}
Substitute (\ref{eqn chi make subject}) into (\ref{eqn chi linearisation})
\begin{align*}
\sqrt{T}g_{T,D}(\hat{\theta}_{T,D})&=\sbr[1]{I_{n(n+1)/2}-E(E^{\intercal}WE)^{-1}E^{\intercal}W}\sqrt{T}g_{T,D}(\theta)+E(E^{\intercal}WE)^{-1}\sqrt{T}E^{\intercal}Wg_{T,D}(\hat{\theta}_{T,D})\\
&=\sbr[1]{I_{n(n+1)/2}-E(E^{\intercal}WE)^{-1}E^{\intercal}W}\sqrt{T}g_{T,D}(\theta),
\end{align*}
where the second equality is due to (\ref{eqn chi first order condition}). Using (\ref{eqn gT function clt}), we have
\begin{align*}
& \sqrt{T}g_{T,D}(\hat{\theta}_{T,D})\xrightarrow{d} N\del [2]{0, \sbr[1]{I_{n(n+1)/2}-E(E^{\intercal}WE)^{-1}E^{\intercal}W}S\sbr[1]{I_{n(n+1)/2}-E(E^{\intercal}WE)^{-1}E^{\intercal}W}^{\intercal}}.
\end{align*}
Now choosing $W=S^{-1}$, we can simplify the asymptotic covariance matrix in the preceding display to
\[S^{1/2}\del [1]{I_{n(n+1)/2}-S^{-1/2}E(E^{\intercal}S^{-1}E)^{-1}E^{\intercal}S^{-1/2}}S^{1/2}.\]
Thus
\[\sqrt{T}\hat{S}^{-1/2}_{T,D}g_{T,D}(\hat{\theta}_{T,D})\xrightarrow{d}N\del [2]{0,I_{n(n+1)/2}-S^{-1/2}E(E^{\intercal}S^{-1}E)^{-1}E^{\intercal}S^{-1/2}},\]
because $\hat{S}_{T,D}$ is a consistent estimate of $S$ given (\ref{eqn spectral norm of H and Hhat}) and Lemma \ref{lemma rate for hatV-V infty}, which hold under the assumptions of this theorem. The asymptotic covariance matrix in the preceding display is idempotent and has rank $n(n+1)/2-s$. Thus, under $H_0$,
\[Tg_{T,D}(\hat{\theta}_{T,D})^{\intercal}\hat{S}^{-1}_{T,D}g_{T,D}(\hat{\theta}_{T,D})\xrightarrow{d}\chi^2_{n(n+1)/2-s}.\]
\end{proof}

\bigskip

To prove Corollary \ref{coro diagonal asymptotics}, we give the following two auxiliary lemmas.

\begin{lemma}
[\cite{vandervaart1998} p27]\label{lemma asy dist chi}
\[
\frac{\chi^{2}_{k}-k}{\sqrt{2k}}\xrightarrow{d}N(0,1),
\]
as $k\to\infty$.
\end{lemma}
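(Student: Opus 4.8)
The plan is to exploit the additive structure of the chi-squared law and reduce the statement to the classical i.i.d.\ central limit theorem. First I would invoke the representation $\chi^2_k \overset{d}{=} \sum_{i=1}^{k} Z_i^2$, where $Z_1, Z_2, \ldots$ are i.i.d.\ $N(0,1)$ random variables. This is the only structural fact required: it recasts the left-hand side as a standardised partial sum of the i.i.d.\ sequence $\{Z_i^2\}_{i\geq 1}$, at which point the problem is entirely standard.

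Next I would record the first two moments of the summands. Since $Z_i \sim N(0,1)$, we have $\mathbb{E}[Z_i^2] = 1$, and, using $\mathbb{E}[Z_i^4] = 3$,
\[
\var(Z_i^2) = \mathbb{E}[Z_i^4] - (\mathbb{E}[Z_i^2])^2 = 3 - 1 = 2.
\]
In particular the summands are i.i.d.\ with finite, nonzero variance, so the Lindeberg--L\'evy CLT applies directly to $\sum_{i=1}^k Z_i^2$, yielding
\[
\frac{\sum_{i=1}^k Z_i^2 - k\,\mathbb{E}[Z_1^2]}{\sqrt{k\,\var(Z_1^2)}} = \frac{\chi^2_k - k}{\sqrt{2k}} \xrightarrow{d} N(0,1)
\]
as $k \to \infty$, which is precisely the asserted convergence.

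There is essentially no obstacle here: once the sum-of-squares representation is invoked, the result is a textbook consequence of the i.i.d.\ CLT, which is why it can simply be cited (e.g.\ \cite{vandervaart1998} p27). The only point demanding a modicum of care is the moment computation $\var(Z_1^2)=2$, since this is what pins down the normalising constant $\sqrt{2k}$; a sign or factor error there would misstate the scaling. An alternative route through the moment generating function $(1-2t)^{-k/2}$ of $\chi^2_k$ would also work, but it requires a slightly more delicate second-order expansion of $\log\bigl(1 - 2t/\sqrt{2k}\bigr)$ and a continuity argument, so the CLT approach is both shorter and more transparent.
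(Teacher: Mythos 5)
Your proof is correct: the paper gives no proof of this lemma at all, simply citing \cite{vandervaart1998} (p.~27), and your argument---representing $\chi^2_k$ as $\sum_{i=1}^k Z_i^2$ with $\mathbb{E}[Z_i^2]=1$, $\var(Z_i^2)=2$, then applying the Lindeberg--L\'evy CLT---is precisely the standard textbook derivation that citation stands for. Nothing is missing; the moment computation pinning down the $\sqrt{2k}$ scaling is the only step requiring care, and you have it right.
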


\begin{lemma}
[\cite{vandervaart2010} p41]\label{lemma sequential asymptotics} For
$T,n\in\mathbb{N}$ let $X_{T,n}$ be random vectors such that $X_{T,n}%
\xrightarrow{d} X_{n}$ as $T\to\infty$ for every fixed $n$ such that
$X_{n}\xrightarrow{d} X$ as $n\to\infty$. Then there exists a sequence
$n_{T}\to\infty$ such that $X_{T,n_{T}}\xrightarrow{d} X$ as $T\to\infty$.
\end{lemma}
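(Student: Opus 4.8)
The plan is to exploit the fact that weak convergence of random vectors in a fixed finite-dimensional Euclidean space is metrizable, and then run a standard diagonal extraction. Let $\rho$ denote a metric that metrizes convergence in distribution on the relevant Euclidean space — for instance the L\'evy--Prokhorov metric or the bounded-Lipschitz metric — so that, writing $\mathcal{L}(\cdot)$ for the law of a random vector, $\rho(\mathcal{L}(Y_m),\mathcal{L}(Y))\to 0$ holds if and only if $Y_m\xrightarrow{d}Y$. Under this reformulation the two hypotheses become: (a) for each fixed $n$, $\rho(\mathcal{L}(X_{T,n}),\mathcal{L}(X_n))\to 0$ as $T\to\infty$; and (b) $\rho(\mathcal{L}(X_n),\mathcal{L}(X))\to 0$ as $n\to\infty$.

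First I would use (a) to select, for each $n$, a threshold $T_n$ such that $\rho(\mathcal{L}(X_{T,n}),\mathcal{L}(X_n))\le 1/n$ for all $T\ge T_n$, arranging without loss of generality that $T_1<T_2<\cdots$ is strictly increasing. Next I would define the index sequence $n_T := \max\{n : T_n\le T\}$ for $T\ge T_1$ (and $n_T:=1$ for smaller $T$). Since each $T_n$ is finite and the thresholds strictly increase, for any fixed $N$ one has $n_T\ge N$ whenever $T\ge T_N$, so $n_T\to\infty$ as $T\to\infty$, which supplies the required divergence of the chosen indices.

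Then I would chain the estimates via the triangle inequality:
\[
\rho(\mathcal{L}(X_{T,n_T}),\mathcal{L}(X)) \le \rho(\mathcal{L}(X_{T,n_T}),\mathcal{L}(X_{n_T})) + \rho(\mathcal{L}(X_{n_T}),\mathcal{L}(X)).
\]
By the definition of $n_T$ we have $T\ge T_{n_T}$, so the first term on the right is at most $1/n_T\to 0$; the second term tends to zero by (b) together with $n_T\to\infty$. Hence $\rho(\mathcal{L}(X_{T,n_T}),\mathcal{L}(X))\to 0$, which is exactly $X_{T,n_T}\xrightarrow{d}X$ as $T\to\infty$.

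The only delicate point — and the one I would take care to justify — is the metrizability of weak convergence invoked at the outset, which is valid precisely because the random vectors live in a fixed finite-dimensional (hence separable) Euclidean space; this is what licenses collapsing the two distributional hypotheses into convergence of a single scalar metric and then combining them by the triangle inequality. Everything else is routine diagonal bookkeeping, so I expect no serious obstacle. Since the statement is a standard result, it would equally be legitimate simply to cite \cite{vandervaart2010}; the sketch above merely reconstructs that argument.
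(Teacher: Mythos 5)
Your proposal is correct. Note, however, that the paper does not actually prove this lemma at all: it is stated as a quoted auxiliary result with the citation to \cite{vandervaart2010}, p.~41, and is used as a black box in the proof of Corollary \ref{coro diagonal asymptotics}. Your argument is the standard reconstruction of that cited result, and every step checks out: weak convergence of random vectors in a fixed Euclidean space is indeed metrizable (bounded-Lipschitz or L\'evy--Prokhorov metric), the thresholds $T_n$ can be taken strictly increasing without loss of generality, the set $\{n : T_n \le T\}$ is finite and nonempty for $T \ge T_1$ so $n_T$ is well defined and diverges, and the triangle inequality then combines the two hypotheses, with the finitely many indices $T < T_1$ being asymptotically irrelevant. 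The one genuinely delicate point --- that metrizability is what allows the two limiting statements to be chained --- is exactly the one you flag and justify, so the proof is complete as written; citing the reference, as the paper does, would of course also have sufficed.
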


Now we are ready to give a proof for Corollary \ref{coro diagonal asymptotics}.

\begin{proof}[Proof of Corollary \ref{coro diagonal asymptotics}]
We only give a proof for part (i), as that for part (ii) is similar.  From (\ref{eqn chi fix overiden}) and the Slutsky lemma, we have for every fixed $n$ (and hence $v$ and $s$)
\[\frac{Tg_{T,D}(\hat{\theta}_{T,D})^{\intercal}\hat{S}^{-1}_{T,D}g_{T,D}(\hat{\theta}_{T,D})-\sbr[1]{\frac{n(n+1)}{2}-s}}{\sbr[1]{n(n+1)-2s}^{1/2}}\xrightarrow{d}\frac{\chi^2_{n(n+1)/2-s}-\sbr[1]{\frac{n(n+1)}{2}-s}}{\sbr[1]{n(n+1)-2s}^{1/2}},\]
as $T\to \infty$. Then invoke Lemma \ref{lemma asy dist chi}
\[\frac{\chi^2_{n(n+1)/2-s}-\sbr[1]{\frac{n(n+1)}{2}-s}}{\sbr[1]{n(n+1)-2s}^{1/2}}\xrightarrow{d}N(0,1),\]
as $n\to \infty$ under $H_0$. Next invoke Lemma \ref{lemma sequential asymptotics}, there exists a sequence $n=n_T$ such that
\[\frac{Tg_{T,n,D}(\hat{\theta}_{T,n,D})^{\intercal}\hat{S}_{T,n,D}^{-1}g_{T,n,D}(\hat{\theta}_{T,n,D})-\sbr[1]{\frac{n(n+1)}{2}-s}}{\sbr[1]{n(n+1)-2s}^{1/2}}\xrightarrow{d} N(0,1),\qquad \text{ under }H_0\]
as $T\to \infty$.
\end{proof}

\subsection{Miscellaneous Results}
\label{secSM.cor.cramerwold}

This subsection contains miscellaneous results of the article.

\begin{proof}[Proof of Corollary \ref{cor cramerwold}]
Note that Theorem \ref{thm asymptotic normality} and a result we proved before, namely,
\begin{equation}
\label{eqn another version nGhat-nG}
|c^{\intercal}\hat{J}_{T,D}c-c^{\intercal}J_Dc|=o_p\del [3]{\frac{1}{sn\kappa(W)}},
\end{equation}
imply
\begin{equation}
\label{eqn another version of MD clt}
\sqrt{T}c^{\intercal}(\hat{\theta}_{T,D}-\theta^0)\xrightarrow{d}N(0,c^{\intercal}J_Dc).
\end{equation}
Consider an arbitrary, non-zero vector $b\in \mathbb{R}^k$. Then
\[\enVert[3]{\frac{Ab}{\|Ab\|_2}}_2=1,\]
so we can invoke (\ref{eqn another version of MD clt}) with $c=Ab/\|Ab\|_2$:
\[\sqrt{T}\frac{1}{\|Ab\|_2}b^{\intercal}A^{\intercal}(\hat{\theta}_{T,D}-\theta^0)\xrightarrow{d}N\del [3]{0,\frac{b^{\intercal}A^{\intercal}}{\|Ab\|_2}J_D\frac{Ab}{\|Ab\|_2}},\]
which is equivalent to
\[\sqrt{T}b^{\intercal}A^{\intercal}(\hat{\theta}_{T,D}-\theta^0)\xrightarrow{d}N\del [1]{0,b^{\intercal}A^{\intercal}J_DAb}.\]
Since $b\in \mathbb{R}^k$ is non-zero and arbitrary, via the Cramer-Wold device, we have
\[\sqrt{T}A^{\intercal}(\hat{\theta}_{T,D}-\theta^0)\xrightarrow{d}N\del [1]{0,A^{\intercal}J_DA}.\]
Since we have shown in the mathematical display above (\ref{eqn maxeval EE}) that $J_D$ is positive definite and $A$ has full-column rank, $A^{\intercal}J_DA$ is positive definite and its negative square root exists. Hence,
\[\sqrt{T}(A^{\intercal}J_DA)^{-1/2}A^{\intercal}(\hat{\theta}_{T,D}-\theta^0)\xrightarrow{d}N\del [1]{0,I_k}.\]
Next from (\ref{eqn another version nGhat-nG}),
\[\envert[1]{b^{\intercal}Bb}:=\envert[1]{b^{\intercal}A^{\intercal}\hat{J}_{T,D}Ab-b^{\intercal}A^{\intercal}J_DAb}=o_p\del [3]{\frac{1}{sn\kappa(W)}}\|Ab\|_2^2\leq o_p\del [3]{\frac{1}{sn\kappa(W)}}\|A\|_{\ell_2}^2\|b\|_2^2.\]
By choosing $b=e_j$ where $e_j$ is a vector in $\mathbb{R}^k$ with $j$th component being 1 and the rest of components being 0, we have for $j=1,\ldots,k$
\[\envert[1]{B_{jj}}\leq o_p\del [3]{\frac{1}{sn\kappa(W)}}\|A\|_{\ell_2}^2=o_p(1),\]
where the equality is due to $\|A\|_{\ell_2}=O(\sqrt{sn\kappa(W)})$. By choosing $b=e_{ij}$, where $e_{ij}$ is a vector in $\mathbb{R}^k$ with $i$th and $j$th components being $1/\sqrt{2}$ and the rest of components being 0, we have
\begin{align*}
&\envert[1]{B_{ii}/2+B_{jj}/2+B_{ij}}\leq o_p\del [3]{\frac{1}{sn\kappa(W)}}\|A\|_{\ell_2}^2=o_p(1).
\end{align*}
Then
\[|B_{ij}|\leq |B_{ij}+B_{ii}/2+B_{jj}/2|+|-(B_{ii}/2+B_{jj}/2)|=o_p(1).\]
Thus we proved
\[B=A^{\intercal}\hat{J}_{T,D}A-A^{\intercal}J_DA=o_p(1),\]
because the dimension of the matrix $B$, $k$, is finite. By Slutsky's lemma
\[\sqrt{T}(A^{\intercal}\hat{J}_{T,D}A)^{-1/2}A^{\intercal}(\hat{\theta}_{T,D}-\theta^0)\xrightarrow{d}N\del [1]{0,I_k}.\]
\end{proof}

\bigskip

\begin{lemma}
\label{prop H inverse}
For any positive definite matrix $\Theta$,
\[\del [3]{\int_{0}^{1}[t(\Theta-I)+I]^{-1}\otimes\lbrack t(\Theta-I)+I]^{-1}dt}^{-1}=\int_{0}^{1}e^{t\log\Theta}\otimes e^{(1-t)\log\Theta}dt.\]
\end{lemma}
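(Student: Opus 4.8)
The plan is to reduce both sides to diagonal matrices in a common eigenbasis and then observe that the diagonal entries are reciprocals of one another. Since $\Theta$ is positive definite, orthogonally diagonalize it as $\Theta=Q^{\intercal}\text{diag}(\lambda_1,\ldots,\lambda_n)Q$ with all $\lambda_j>0$. For every $t\in[0,1]$ the matrix $t(\Theta-I)+I$ has eigenvalues $t\lambda_j+(1-t)$, each a convex combination of $\lambda_j>0$ and $1$, hence strictly positive; thus $t(\Theta-I)+I$ is positive definite and invertible on the whole interval, and $[t(\Theta-I)+I]^{-1}=Q^{\intercal}\text{diag}\del[1]{[t\lambda_j+(1-t)]^{-1}}Q$.

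First I would rewrite the left-hand integrand using the mixed-product property: $[t(\Theta-I)+I]^{-1}\otimes[t(\Theta-I)+I]^{-1}=(Q\otimes Q)^{\intercal}N_t(Q\otimes Q)$, where $N_t$ is the $n^2\times n^2$ diagonal matrix whose $[(i-1)n+j]$th entry is $[(t\lambda_i+(1-t))(t\lambda_j+(1-t))]^{-1}$. Because $Q\otimes Q$ is a constant orthogonal matrix, it can be pulled outside the integral, so the left side equals $(Q\otimes Q)^{\intercal}N(Q\otimes Q)$ with $N:=\int_0^1 N_t\,dt$ again diagonal.

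Next I would evaluate the scalar integrals. Writing the $[(i-1)n+j]$th diagonal entry of $N$ as $\int_0^1\frac{dt}{(t(\lambda_i-1)+1)(t(\lambda_j-1)+1)}$, a partial-fraction computation gives $\frac{\log\lambda_i-\log\lambda_j}{\lambda_i-\lambda_j}$ when $\lambda_i\neq\lambda_j$, and the direct value $1/\lambda_i$ when $\lambda_i=\lambda_j$ (which is also the continuous limit of the former as $\lambda_j\to\lambda_i$). Since $Q\otimes Q$ is orthogonal, the inverse of the left side is simply $(Q\otimes Q)^{\intercal}N^{-1}(Q\otimes Q)$, and $N^{-1}$ is diagonal with $[(i-1)n+j]$th entry $\lambda_i$ when $\lambda_i=\lambda_j$ and $\frac{\lambda_i-\lambda_j}{\log\lambda_i-\log\lambda_j}$ when $\lambda_i\neq\lambda_j$.

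Finally I would identify $N^{-1}$ with the right-hand side. The right-hand integral is exactly $\Psi=\int_0^1 e^{t\Omega}\otimes e^{(1-t)\Omega}dt$ with $\Omega=\log\Theta$, whose eigenstructure was already computed in Lemma \ref{prop middle of Hessian}(iv): in the $(Q\otimes Q)$ basis it is diagonal with $[(i-1)n+j]$th entry $\lambda_i$ when $\lambda_i=\lambda_j$ and $\frac{\lambda_i-\lambda_j}{\log\lambda_i-\log\lambda_j}$ otherwise. These entries coincide with those of $N^{-1}$, so the two matrices are equal and the identity follows. The only mildly delicate point is the partial-fraction integration together with the coincident-eigenvalue case handled by continuity; since no $\lambda_j$ vanishes, none of the integrands is singular, so this step is routine.
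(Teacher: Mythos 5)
Your proof is correct, but it takes a genuinely different route from the paper's. The paper's own proof is essentially three lines: it quotes (11.9) and (11.10) of Higham (2008, p.~272), namely
$\ve E=\bigl(\int_{0}^{1}e^{t\log\Theta}\otimes e^{(1-t)\log\Theta}\,dt\bigr)\ve L(\Theta,E)$ and
$\ve L(\Theta,E)=\bigl(\int_{0}^{1}[t(\Theta-I)+I]^{-1}\otimes[t(\Theta-I)+I]^{-1}\,dt\bigr)\ve E$,
where $L(\Theta,E)$ is the Fr\'echet derivative of the matrix logarithm at $\Theta$ in direction $E$; substituting the second identity into the first and using that $E$ is arbitrary shows the two Kronecker-integral matrices multiply to $I_{n^2}$, hence are mutual inverses (in effect, the chain rule for $\exp(\log X)=X$). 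You instead work in the common eigenbasis $Q\otimes Q$, evaluate the scalar integrals by partial fractions, and match reciprocals: the left-hand matrix has eigenvalues $\frac{\log\lambda_i-\log\lambda_j}{\lambda_i-\lambda_j}$ (and $1/\lambda_i$ on coincident eigenvalues), while the right-hand side has eigenvalues $\frac{\lambda_i-\lambda_j}{\log\lambda_i-\log\lambda_j}$ (and $\lambda_i$), exactly as computed in the proof of Lemma \ref{prop middle of Hessian}(iv). What the paper's route buys is brevity, at the price of leaning on an external reference; what yours buys is a self-contained elementary argument that additionally exhibits the full spectrum of both matrices (logarithmic means of eigenvalue pairs) and dovetails with the eigenvalue computations the paper already performs for $\Xi$ and $\Psi$. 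Your treatment of the delicate points is adequate: each factor $t\lambda_j+(1-t)$ is a convex combination of positive numbers, so no integrand is singular on $[0,1]$, the diagonal entries of your matrix $N$ are strictly positive (hence $N$ is invertible), and the coincident-eigenvalue case follows by direct integration or by continuity.
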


\begin{proof}
(11.9) and (11.10) of \cite{higham2008} p272 give, respectively, that
\[\ve E=\int_{0}^{1}e^{t\log\Theta}\otimes
e^{(1-t)\log\Theta}dt \ve L(\Theta,E),\]
\[\ve L(\Theta,E)=\int_0^1  [t(\Theta-I)+I]^{-1}\otimes\lbrack
t(\Theta-I)+I]^{-1}dt\ve E.\]
Substitute the preceding equation into the second last
\[\ve E=\int_{0}^{1}e^{t\log\Theta}\otimes
e^{(1-t)\log\Theta}dt \int_0^1  [t(\Theta-I)+I]^{-1}\otimes\lbrack
t(\Theta-I)+I]^{-1}dt\ve E.\]
Since $E$ is arbitrary, the result follows.
\end{proof}

\bigskip

\begin{example}
\label{ex GD special case}
In the special case of normality, $V=2D_{n}D_{n}^{+}(\Sigma\otimes\Sigma)$
(\cite{magnusneudecker1986} Lemma 9). Then $c^{\intercal}J_Dc$ could be simplified into
\begin{align*}
&  c^{\intercal}J_Dc=\\
&  2c^{\intercal}(E^{\intercal}WE)^{-1}E^{\intercal}WD_{n}^{+}H(D^{-1/2}%
\otimes D^{-1/2})D_{n}D_{n}^{+}(\Sigma\otimes\Sigma)(D^{-1/2}\otimes
D^{-1/2})HD_{n}^{+^{\intercal}}WE(E^{\intercal}WE)^{-1}c\\
&  =2c^{ \intercal}(E^{\intercal}WE)^{-1}E^{\intercal}WD_{n}^{+}%
H(D^{-1/2}\otimes D^{-1/2})(\Sigma\otimes\Sigma)(D^{-1/2}\otimes D^{-1/2}
)HD_{n}^{+^{\intercal}}WE(E^{\intercal}WE)^{-1}c\\
&  =2c^{\intercal}(E^{\intercal}WE)^{-1}E^{\intercal}WD_{n}^{+}H(D^{-1/2}%
\Sigma D^{-1/2}\otimes D^{-1/2}\Sigma D^{-1/2})HD_{n}^{+^{\intercal}%
}WE(E^{\intercal} WE)^{-1}c\\
&  =2c^{\intercal}(E^{\intercal}WE)^{-1}E^{\intercal}WD_{n}^{+}H(\Theta
\otimes\Theta)HD_{n}^{+^{\intercal}}WE(E^{\intercal}WE)^{-1}c,
\end{align*}
where the second equality is true because, given the structure of $H$, via Lemma
11 of \cite{magnusneudecker1986}, we have the following identity:
\[D_{n}^{+}H(D^{-1/2}\otimes D^{-1/2})=D_{n}^{+}H(D^{-1/2}\otimes D^{-1/2}%
)D_{n}D_{n}^{+}.\]
\end{example}

\bigskip



\bibliographystyle{ecta}
\bibliography{KronHLT_Biblio}

\begin{thebibliography}{8}
\newcommand{\enquote}[1]{``#1''}
\expandafter\ifx\csname natexlab\endcsname\relax\def\natexlab#1{#1}\fi

\bibitem[\protect\citeauthoryear{Higham}{Higham}{2008}]{higham2008}
\textsc{Higham, N.~J.} (2008): \emph{Functions of Matrices: Theory and
  Computation}, SIAM.

\bibitem[\protect\citeauthoryear{Horn and Johnson}{Horn and
  Johnson}{1991}]{hornjohnson1991}
\textsc{Horn, R.~A. and C.~R. Johnson} (1991): \emph{Topics in Matrix
  Analysis}, Cambridge University Press.

\bibitem[\protect\citeauthoryear{Magnus and Neudecker}{Magnus and
  Neudecker}{1986}]{magnusneudecker1986}
\textsc{Magnus, J.~R. and H.~Neudecker} (1986): \enquote{Symmetry, 0-1 Matrices
  and Jacobians a Review,} \emph{Econometric Theory}, 157--190.

\bibitem[\protect\citeauthoryear{Magnus and Neudecker}{Magnus and
  Neudecker}{2007}]{magnusneudecker2007}
---\hspace{-.1pt}---\hspace{-.1pt}--- (2007): \emph{Matrix Differential
  Calculus with Applications in Statistics and Econometrics}, John Wiley and
  Sons Ltd.

\bibitem[\protect\citeauthoryear{Minka}{Minka}{2000}]{minka2000}
\textsc{Minka, T.~P.} (2000): \enquote{Old and New Matrix Algebra Useful for
  Statistics,} \emph{Working Paper}.

\bibitem[\protect\citeauthoryear{van~der Vaart}{van~der
  Vaart}{1998}]{vandervaart1998}
\textsc{van~der Vaart, A.} (1998): \emph{Asymptotic Statistics}, Cambridge
  University Press.

\bibitem[\protect\citeauthoryear{van~der Vaart}{van~der
  Vaart}{2010}]{vandervaart2010}
---\hspace{-.1pt}---\hspace{-.1pt}--- (2010): \emph{Time Series}.

\bibitem[\protect\citeauthoryear{van~der Vaart and Wellner}{van~der Vaart and
  Wellner}{1996}]{vandervaartWellner1996}
\textsc{van~der Vaart, A. and J.~A. Wellner} (1996): \emph{Weak Convergence and
  Empirical Processes}, Springer.

\end{thebibliography}

\end{document}